\newtheorem{thm}{Theorem}[section]
\newtheorem{thmx}{Theorem}
\newtheorem{lemma}[thm]{Lemma}
\newtheorem{prop}[thm]{Proposition}
\newtheorem{cor}[thm]{Corollary}
\theoremstyle{definition}
\newtheorem{df}[thm]{Definition}
\newtheorem{nota}[thm]{Notation}
\theoremstyle{remark}
\newtheorem*{rem}{Remark}
\theoremstyle{definition}
\newtheorem{ex}[thm]{Example}
\theoremstyle{definition}
\newcommand{\Z}{\mathbb{Z}}
\newcommand{\N}{\mathbb{N}}
\newcommand{\Q}{\mathbb{Q}}
\newcommand{\R}{\mathbb{R}}
\newcommand{\C}{\mathbb{C}}
\newcommand{\F}{\mathbb{F}}
\newcommand{\A}{{\overline{\mathbb{Q}}}}
\newcommand{\norm}[1]{\|#1\|}
\DeclareMathOperator{\Aut}{Aut}
\DeclareMathOperator{\Inn}{Inn}
\DeclareMathOperator{\Out}{Out}
\DeclareMathOperator{\Gal}{Gal}
\DeclareMathOperator{\lcm}{lcm}
\DeclareMathOperator{\vct}{span}
\DeclareMathOperator{\GL}{GL}
\DeclareMathOperator{\RF}{RF}
\DeclareMathOperator{\Id}{Id}
\DeclareMathOperator{\core}{Core}
\DeclareMathOperator{\Gui}{Gui}
\DeclareMathOperator{\nom}{norm}
\DeclareMathOperator{\rng}{ring}
\title{Residual Finiteness Growth in Virtually Nilpotent Groups}
\author{Jonas Der\'e, Joren Matthys and Lukas Vandeputte\thanks{KU Leuven Campus Kulak Kortrijk, Department of Mathematics, Research unit `Algebraic Topology and Group Theory', B-8560 Kortrijk, Belgium. The authors were supported by Internal Funds KU Leuven (project number 3E220559). The third author kindly acknowledges the support by the group of Science Engineering and Technology at KU Leuven Campus Kulak.}}
\date{}
\begin{document}
	\maketitle
	\begin{abstract}
	The residual finiteness growth $\RF_G: \N \to \N$ of a finitely generated group $G$ is a function that gives the smallest value of the index $[G:N]$ with $N$ a normal subgroup not containing a non-trivial element $g$, in function of the word norm of that element $g$. It has been studied for several classes of finitely generated groups, including free groups, linear groups and virtually abelian groups. This paper shows that if $G$ is virtually nilpotent, then $\RF_G = \log^\delta$ for some $\delta\in \N\cup\{0\}$, with moreover an explicit formula for $\delta$ in terms of Lie algebras. This implies in particular that it is an invariant of the complex Mal'cev completion, leading to the application that residual finiteness growth is a profinite invariant for virtually nilpotent groups.
	\end{abstract}
	\section{Introduction}
	A group $G$ is said to be residually finite if the intersection of all finite index normal subgroups is trivial. Hence, for every non-trivial element $g\in G$, there exists a finite index normal subgroup $N_g\lhd_f G$ such that $g\notin N_g$. Equivalently, there exists a homomorphism $\varphi_g: G \to Q_g$ to a finite group such that $\varphi_g(g) \neq e$ by taking $Q_g = G/N_g$. The residual finiteness growth $\RF_G: \N \to \N$ is a function that quantifies this property for finitely generated residually finite groups. If $\norm{\cdot}_G$ denotes a word norm on $G$, then $\RF_G$ is the smallest function such that for every $g\in G$ with $0\neq \norm{g}_G \leq n$ there exists $N_g \lhd_f G$ with $g\notin N_g$ and $[G:N_g] \leq \RF_G(n)$.
	
	The study of the residual finiteness growth started in the founding paper \cite{bou2010quantifying}. Since then, this function has been studied for various classes, including free groups \cite{bradford2019short}, linear groups \cite{franz2017quantifying}, virtually solvable minimax groups \cite{dere2025residual}, and certain branch groups \cite{branch_groups}. More information concerning the current state of this field can be found in the survey article \cite{survey2022}. This paper focuses on the class of finitely generated virtually nilpotent groups. 
	
	The residual finiteness growth of virtually nilpotent groups was first studied in the papers \cite{MR2861528,bou2010quantifying}, where the authors showed that $\RF_G \preceq \log^{h(G)}$ with $h(G)$ the Hirsch length of a nilpotent group $G$. For virtually nilpotent groups $\Gamma$, the bound $\RF_{\Gamma} \preceq (\RF_G)^{[\Gamma:G]}$ with $G$ a finite index nilpotent subgroup was used to show that they have a polylogarithmic upper bound. In fact, these are the only groups with this property within the class of finitely generated linear groups \cite{MR2861528}. In \cite[Conjecture 1]{dere2025twostep}, the authors conjectured that $\RF_G$ is polylogarithmic if and only if $G$ is virtually nilpotent.
	
	Later, the upper bounds on the residual finiteness growth in the nilpotent case were improved in \cite{MarkPubl}. Actually, the paper claimed exact results for all nilpotent groups, but the proof was erroneous, and the corrigendum \cite{pengitoreJGT} could only re-establish the upper bound as counterexamples showed that the lower bound was false. The derived bound was an invariant of the rational Mal'cev completion and hence commensurable nilpotent groups had the same upper bound. In general, it was expected that commensurable nilpotent groups actually have the same residual finiteness growth, but the author only established the equality $\RF_G = \RF_{G/T}$ with  $T$ the torsion subgroup.
	
	In \cite{math_virt_ab}, the residual finiteness growth of virtually abelian groups was calculated. This paper had two main takeaways. Firstly, it noted that if there was a short exact sequence of the form
	$$1 \to G \to \Gamma \to H \to 1$$
	with $H \cong \Gamma/G$ finite, then $\RF_{\Gamma}$ was equal to $\RF_{G, \mathcal{P}_{H\curvearrowright G}}$, i.e.~the residual finiteness growth of $G$ when restricting to normal subgroups that are invariant under the induced action $\varphi: H \to \Out(G)$. Exploiting this in the virtually abelian setting revealed that $\RF_{\Gamma}$ could be calculated using invariant subspaces of a maximal free abelian subgroup $\Z^n$. Secondly, the paper showed that $\RF_{\Gamma}$ was given by $\log^\delta$ with $\delta$ the largest dimension of an invariant $\C$-subspace of $\C^n$, indicating that the residual finiteness growth depends only on complex completions rather than rational ones. The paper \cite{dere2025twostep} then gave upper bounds for two-step nilpotent groups, successfully improving the previous upper bound and establishing lower bound statement for groups with commutator subgroups of rank at most $2$. Subsequently, the bound for the virtually abelian and two-step nilpotent groups were combined to one uniting upper bound statement of the form $\RF_\Gamma \preceq \log^\delta$ for all virtually nilpotent groups in \cite{dere2025residual}, using the Lie group and Lie algebra correspondence.

This paper completely determines the residual finiteness growth for virtually nilpotent groups, and therefore establishes one implication of \cite[Conjecture 1]{dere2025twostep}.

\begin{thmx} \label{thm_intro_exact}
	Let $\Gamma$ be a finitely generated virtually nilpotent group, then there exists $\delta \in \N \cup \{0\}$ such that $\RF_\Gamma = \log^\delta$.
\end{thmx}
\noindent The upper bound statement will not be proven in full detail, since this is part of \cite{dere2025residual}.
The value $\delta$ can be determined using the corresponding complex Lie algebra and the induced action of a finite quotient, as we will illustrate. In particular, this will imply the following consequence.

\begin{thmx} \label{thm_intro_profinite_invariant}
Let $\Gamma_1,\Gamma_2$ be finitely generated virtually nilpotent groups with isomorphic profinite completions $\hat{\Gamma}_1 \approx \hat{\Gamma}_2$, then $\RF_{\Gamma_1} = \RF_{\Gamma_2}$.
\end{thmx}

In order to prove the main result, we will show in Section \ref{sec_invariance} that the asymptotic behavior of $\RF_{G, \mathcal{P}_{H\curvearrowright G}}$ is preserved when passing from $G$ to an $H$-invariant subgroup.
\begin{thmx} \label{thm_intro_Q_inv2}
	Let $G_1 \lhd_f G_2$ be two torsion-free, finitely generated nilpotent groups and $H \leq \Aut(G_2)$. If $G_1$ is $H$-invariant, then $\RF_{G_1, \mathcal{P}_{H\curvearrowright G_1}} = \RF_{G_2, \mathcal{P}_{H\curvearrowright G_2}}$.
\end{thmx}
\noindent Specifically for nilpotent groups, this result directly implies that commensurable nilpotent groups have the same residual finiteness growth. This commensurability invariance does not hold in the larger class of virtually nilpotent groups, as is clear from \cite{math_virt_ab}.
	
	In Section \ref{sec_Lie_inv}, we will show that $G_1$ in Theorem \ref{thm_intro_Q_inv2} can be chosen to be an LR-group, i.e. under the Lie group-Lie algebra correspondance $G_1$ is also a Lie ring $L$. For such LR-groups, we relate the residual finiteness growth to a similar notion for Lie rings, which we introduce in this paper.  
	\begin{thmx}\label{thm_intro_RFG_RFL}
		Let $G$ be an LR-group with underlying Lie ring structure $L$ and $H \leq \Aut(G)$, then $\RF_{G, \mathcal{P}_{H\curvearrowright G}} = \RF_{L, \mathcal{P}_{H\curvearrowright L}}$.
	\end{thmx}
\noindent Here, $\RF_{L, \mathcal{P}_{H\curvearrowright L}}$ is defined using a Guivarc'h length on the Lie ring $L$ and by using invariant ideals in $L$ instead of normal subgroups. From this result, we conclude that $\RF_{\Gamma}$ for any virtually nilpotent group $\Gamma$ equals $\RF_{L, \mathcal{P}_{H\curvearrowright L}}$ for some Lie ring $L$ and finitely generated $H \leq \Aut(L)$.
	
Finally, Section \ref{sec_upper} gives the proof of the main result using the relation to Lie rings. After recalling the upper bound, we demonstrate that $\log^\delta$ gives a lower bound for $\RF_{L, \mathcal{P}_{H\curvearrowright L}}$ using the Lefschetz Principle and the Ax-Kochen Principle from first order logic. The final section gives the proof of Theorem \ref{thm_intro_profinite_invariant} using the explicit form of $\delta$.
	
	\section{Preliminaries} \label{sec_prelim}
This section introduces notation about two different topics. Firstly, we recall some notions concerning nilpotent groups and Lie algebras, secondly, we introduce the residual finiteness growth for finitely generated residually finite groups and for Lie rings.
	\subsection{Lie Groups and Algebras}
	\label{sec_Lie}
	In this subsection, we introduce the notions of nilpotency and the correspondence between Lie groups and algebras. We will use the convention that $\N = \{1,2,3, \ldots\}$.
	\begin{df}
		Let $R$ be a commutative ring. A \emph{Lie $R$-algebra} $L$ is an $R$-module equipped with an alternating bilinear map $[\cdot, \cdot]_L: L\times L \to L$ that satisfies the Jacobi identity, i.e.
		$$[v_1,[v_2,v_3]_L]_L  + [v_2,[v_3,v_1]_L]_L + [v_3,[v_1,v_2]_L]_L = 0.$$ 
		If $L$ is a free $\Z$-module of finite rank, then we call $L$ a Lie ring.
	\end{df}
If $L$ is a Lie $R$-algebra, then its lower central series $(\gamma_i(L))_{i\in \N}$ is defined as $\gamma_1(L) = L$ and $\gamma_{i+1}(L) = [\gamma_i(L),L]_L$. If there exists $c\in\N$ such that $\gamma_{c+1}(L) = 0$, then we say $L$ is nilpotent. The smallest such $c\in\N$ is called the \emph{nilpotency class} of $L$. Similarly, if $G$ is a group, we define its lower central series $(\Gamma_i(G))_{i\in \N}$ via $\Gamma_1(G) = G$ and $\Gamma_{i+1}(G) = [\Gamma_i(G),G] = \langle [g,h] \mid g\in \Gamma_i(G), h\in G \rangle$, where $[g,h] := g^{-1}h^{-1}gh$. If there exists $c\in\N$ such that $\Gamma_{c+1}(G) = \{e\}$, then we say $L$ is \emph{nilpotent}. The smallest such $c\in\N$ is called the \emph{nilpotency class} of $G$.

	\begin{df}
		A \emph{$\mathcal{I}$-group} denotes a finitely generated torsion-free nilpotent group.
	\end{df}

		Let $R$ be a commutative ring. Denote the set of strict upper triangular matrices in $R^{n\times n}$ by $\mathfrak{u}^0(n,R)$. Denote the set of upper unitriangular matrices in $R^{n\times n}$ by $U^1(n,R)$. Note that $\mathfrak{u}^0(n,R)$ is a Lie $R$-algebra with the standard Lie bracket for matrices. The set $U^1(n,R)$ is a nilpotent group for matrix multiplication. Both $\mathcal{I}$-groups and finite-dimensional Lie $\C$-algebras can be realized as subgroups/subalgebras of $\GL(n , \C)$ or $\C^{n\times n}$ for some $n\in \N$ by \cite[Theorem 4 and 5]{Birkhoff} and \cite[Chapter 5, Theorem 2]{sega} respectively:

	\begin{thm} \label{thm_embedding}
		If $L$ is a finite-dimensional Lie algebra over a field $\F$ of characteristic zero, then $L$ can be embedded in $\mathfrak{u}^0(n, \F)$ for some $n\in \N$. If $G$ is an $\mathcal{I}$-group, then $G$ can be embedded in $U^1(n , \Z)$ for some $n\in \N$.
	\end{thm}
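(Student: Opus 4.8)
The plan is to treat the two assertions in turn, deriving the group statement from the Lie-algebra one. Before starting, note that the Lie-algebra assertion can only hold when $L$ is nilpotent: $\mathfrak{u}^0(n,\F)$ is a nilpotent Lie algebra (by Engel's theorem) and any subalgebra of a nilpotent Lie algebra is nilpotent. So I would assume $L$ is nilpotent, of class $c$ say, and prove the first part by the classical Ado--Birkhoff construction through the universal enveloping algebra; for the second part I would pass to the rational Mal'cev completion of $G$, feed its Lie algebra into the first part, and finish by a conjugation that clears denominators.

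For the Lie algebra, let $U=U(L)$, let $\mathfrak{a}\subseteq U$ be the augmentation ideal, and set $V=U/\mathfrak{a}^{c+1}$; let $L$ act on $V$ by left multiplication, giving a representation $\rho\colon L\to\mathfrak{gl}(V)$. The Poincar\'e--Birkhoff--Witt theorem then yields three facts. (i) $\dim_\F V<\infty$, because the finitely many ordered PBW monomials of length $\le c$ already span $V$ (those of length $\ge c+1$ lying in $\mathfrak{a}^{c+1}$). (ii) $\rho$ is faithful, because $\rho(x)=0$ forces $x=x\cdot 1\in\mathfrak{a}^{c+1}$, while $L\cap\mathfrak{a}^{c+1}=\gamma_{c+1}(L)=0$ — here I would invoke the standard identity $L\cap\mathfrak{a}^{k}=\gamma_k(L)$, valid over a field of characteristic zero. (iii) Each $\rho(x)$ is nilpotent, indeed $\rho(x)^{c+1}=0$, since $x\in\mathfrak{a}$ gives $x^{c+1}U\subseteq\mathfrak{a}^{c+1}$. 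Thus $\rho(L)$ is a Lie algebra of nilpotent endomorphisms of the nonzero space $V$, so by Engel's theorem there is a complete flag $0=V_0\subsetneq V_1\subsetneq\dots\subsetneq V_n=V$ with $\rho(L)V_i\subseteq V_{i-1}$; writing matrices in a basis adapted to this flag embeds $L$ into $\mathfrak{u}^0(n,\F)$. This is the content of \cite[Theorem 4]{Birkhoff}.

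For the group, let $G$ be an $\mathcal{I}$-group. I would pass to its rational Mal'cev completion $G^{\Q}=\exp(\mathfrak{g})$, where $\mathfrak{g}$ is a nilpotent Lie $\Q$-algebra of dimension $h(G)$ and $\exp\colon\mathfrak{g}\to G^{\Q}$ is a bijection whose inverse $\log$ and whose group law (the Baker--Campbell--Hausdorff series, which is a finite sum because $\mathfrak{g}$ is nilpotent) are polynomial; in particular $G\subseteq G^{\Q}$ and $\log(G)\subseteq\mathfrak{g}$. Applying the first part to $\mathfrak{g}$ over $\Q$ gives a Lie-algebra embedding $\phi\colon\mathfrak{g}\hookrightarrow\mathfrak{u}^0(n,\Q)$; since $\exp\colon\mathfrak{u}^0(n,\Q)\to\mathfrak{u}^1(n,\Q)$ is a bijection governed by the same BCH series, $\Phi:=\exp\circ\phi\circ\log$ is an injective homomorphism $G^{\Q}\hookrightarrow\mathfrak{u}^1(n,\Q)$, and restricting it to $G$ gives $G\hookrightarrow\mathfrak{u}^1(n,\Q)$. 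To obtain integer entries I would fix generators $g_1,\dots,g_k$ of $G$ and a positive integer $D$ with all entries of $\Phi(g_1),\dots,\Phi(g_k)$ in $\tfrac1D\Z$, and conjugate by $P=\operatorname{diag}(D^{n-1},D^{n-2},\dots,D,1)$: this multiplies the $(i,j)$ entry of a unitriangular matrix by $D^{j-i}$, which turns $\tfrac1D\Z$ into $\Z$ for every entry strictly above the diagonal (where $j-i\ge 1$) and fixes the diagonal $1$'s, so each $P\Phi(g_i)P^{-1}$ lies in $\mathfrak{u}^1(n,\Z)$ and hence $P\Phi(G)P^{-1}\cong G$ is a subgroup of $\mathfrak{u}^1(n,\Z)$. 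This matches \cite[Chapter 5, Theorem 2]{sega83-1}; alternatively one can argue directly by induction on $h(G)$ along a central series with infinite cyclic factors, letting $G$ act unipotently on the free $\Z$-module spanned by the Mal'cev coordinate monomials.

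The one step that is not purely formal is (iii): a faithful representation of a nilpotent Lie algebra need not consist of nilpotent operators (for example $L=\F I\subseteq\mathfrak{gl}(m,\F)$), so one cannot merely quote Ado's theorem and triangularize — one must build a representation that is nilpotent on the nose, which is exactly what the truncated enveloping algebra $U/\mathfrak{a}^{c+1}$ provides. Once that is in hand, Engel's theorem, the detour through $G^{\Q}$, and the denominator-clearing conjugation are all routine; in fact both halves of the theorem are classical and may simply be cited from \cite{Birkhoff} and \cite{sega83-1}.
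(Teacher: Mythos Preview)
Your proof is correct and, indeed, more detailed than what the paper offers: the paper does not prove Theorem~\ref{thm_embedding} at all but simply records it as a known fact, pointing to \cite[Theorems 4 and 5]{Birkhoff} for the Lie-algebra part and \cite[Chapter 5, Theorem 2]{sega83-1} for the group part. Your argument reconstructs precisely these classical proofs---the truncated enveloping algebra $U(L)/\mathfrak a^{c+1}$ followed by Engel's theorem for the first part, and passage through the rational Mal'cev completion with a diagonal conjugation to clear denominators for the second---so there is no methodological difference, only a difference in level of detail. You also rightly flag that the hypothesis ``nilpotent'' is missing from the Lie-algebra statement as printed; the paper's context makes clear this is intended, but it is worth noting.
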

	Let $\F$ be a field of characteristic zero. The exponential map for matrices induces a bijection $\exp : \mathfrak{u}^0(n, \F) \to U^1(n , \F)$ with inverse $\log: U^1(n, \F) \to \mathfrak{u}^0(n , \F)$, see \cite[Theorem 1.2.1]{MR1070979}. In fact, this allows one to define a group isomorphism
	\begin{equation} \label{eq_Phi}
	\Phi: U^1(n, \F) \to (\mathfrak{u}^0(n , \F), \ast): M \mapsto \log(M),
	\end{equation}
	using the \emph{Baker-Campbell-Hausdorff formula}
	\begin{equation} \label{eq_BCH}
	M_1\ast M_2 = M_1 + M_2 + \dfrac{1}{2}[M_1,M_2]_L + \sum_{i=3}^n q_i(M_1,M_2).
	\end{equation}
	Here, $[\cdot, \cdot]_L$ denotes the Lie bracket for matrices, and $q_i(M_1,M_2)$ is a specific rational linear combination of nested Lie brackets of length $i$. We refer to \cite[Section 1.2]{MR1070979} for more details.
	
	By Theorem \ref{thm_embedding} and Equation \eqref{eq_Phi}, we know that every $\mathcal{I}$-group $G$ is a subgroup of $(\mathfrak{u}^0(n , \Q), \ast)$ for some $n\in \N$. This observation allows us to give the following definition:
	\begin{df}
		\label{df_completion}
		Let $G \leq (\mathfrak{u}^0(n , \Q), \ast)$ be a finitely generated group and $R$ a commutative ring of characteristic zero containing the constants occuring in equation \eqref{eq_BCH}. The set $G^R = \vct_R G$ consisting of the $R$-linear span of elements in $G$ is called the \emph{$R$-completion} of $G$. The $\Q$-completion and $\C$-completion are also called the \emph{rational and complex Mal'cev completions} of $G$.
	\end{df}
	The $R$-completion $G^R$ is both a group for the Baker-Campbell-Hausdorff formula as in Equation \eqref{eq_BCH} and a Lie $R$-algebra by \cite[Chapter 6]{sega}. The following nilpotent groups will play an important role in this paper:
	\begin{df}
		Let $G \leq (\mathfrak{u}^0(n , \Q), \ast)$ be a finitely generated group. If $G$ is simultaneously a Lie ring, i.e. it is closed under addition and taking the Lie bracket, then we say $G$ is an \emph{LR-group}.
	\end{df}
	
	Let $G\leq (\mathfrak{u}^0(n, \Q), \ast)$ be an LR-group with underlying Lie ring structure $L$. Both $G$ and $L$ have a metric structure, namely as a finitely generated group and as a $\Z$-module, which we introduce below, including the relation between both.
	\begin{df}
		Let $G$ be a finitely generated group with finite generating set $S$. The \emph{word norm} on $G$ via $S$ is defined as 
		$$\norm{g}_{G,S} = \min\{k \mid g = s_1\dots s_k, s_i\in S\cup S^{-1}, k\in \N\cup\{0\}\}.$$
	\end{df} 
	Since $L^\Q$ is a $\Q$-vector space, we can take a norm $\norm{\cdot}_L$ on $L$ by restricting a norm on $L^\Q$. Below, we define the Guivarc'h length of a nilpotent Lie algebra or Lie ring. This length function first appeared in the proof of \cite[Theorem II.1]{MR369608} in order to obtain improved bounds on the word growth of nilpotent groups.
	\begin{df}
		Let $\mathfrak{g}$ denote a finite-dimensional nilpotent Lie $\R$-algebra of nilpotency class $c$ with lower central series $\left(\gamma_i(\mathfrak{g})\right)_{1\leq i\leq c}$. Take vector spaces $\mathfrak{a}_i \subset \mathfrak{g}$ such that $\gamma_i(\mathfrak{g}) = \mathfrak{a}_i \oplus \ldots \oplus \mathfrak{a}_c$ for all $1\leq i \leq c$. Choose a vector space norm $\norm{\cdot}_i$ on $\mathfrak{a}_i$. A \emph{Guivarc'h length} $l_G$ with respect to the chosen decomposition and norms for a vector $v\in \mathfrak{g}$ is given by
		$$l_G(v) := \sup\{(\norm{v_i}_i)^{1/i}\mid v = v_1 + \ldots + v_c \in \mathfrak{a}_1 \oplus \ldots \oplus \mathfrak{a}_c\}.$$
	\end{df}
	\begin{df}
		A Guivarc'h length $l_G$ on a (nilpotent) Lie ring $L$ is defined as the restriction of a Guivarc'h length on $L^\R$ to $L$. 
	\end{df}
	The Guivarc'h length depends on the choice of decomposition $\mathfrak{a}_i$ and the choice of norms $\norm{\cdot}_i$ on each component. However, if $l'_G$ is another Guivarc'h length on the Lie ring $L$, then by \cite{MR369608} there exists a constant $C>0$ such that
	\begin{equation} \label{eq_equiv_Guiv}
		\dfrac{1}{C}l'_G(v) \leq l_G(v) \leq Cl'_G(v).
	\end{equation}
	As $\max\left\{\norm{v_i}_i\mid v = v_1 + \ldots + v_c \in \mathfrak{a}_1 \oplus \ldots \oplus \mathfrak{a}_c\right\}$ defines a genuine norm on $\mathfrak{g}$ and all norms on finite-dimensional vector spaces are equivalent, there exists for every norm $\norm{\cdot}$ on a Lie ring $L$ a constant $C > 0$ such that
	\begin{equation}\label{eq_norm_to_Gui}
		\dfrac{1}{C}\norm{v} \leq l_G(v)^c \leq C\norm{v}^c.
	\end{equation}
	
	In an LR-group, word norms are equivalent with Guivarc'h lengths on the underlying Lie ring structure, as is shown in \cite{MR369608}:
	\begin{prop} \label{prop_Guivarch}
		Let $G\leq (\mathfrak{u}^0(n, \Q), \ast)$ be an LR-group with underlying Lie ring structure $L$. Let $l_G$ denote a Guivarc'h length on $L$ and fix a generating set of $G$ with induced word norm $\norm{\cdot}_G$. Then, there exists a constant $C>0$ such that for all $g\in G$
		$$\dfrac{1}{C}\norm{g}_G \leq l_G(g) \leq C\norm{g}_G .$$
	\end{prop}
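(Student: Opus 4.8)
The plan is to prove the two inequalities $l_G(g)\le C\norm{g}_G$ and $\norm{g}_G\le C\, l_G(g)$ separately; the proposition is essentially Guivarc'h's classical estimate \cite{MR369608}, and the choice of decomposition and norms is irrelevant by \eqref{eq_equiv_Guiv}. Fix once and for all the data defining $l_G$: a decomposition $\mathfrak{g}=L^\R=\mathfrak{a}_1\oplus\cdots\oplus\mathfrak{a}_c$ with $\gamma_i(\mathfrak{g})=\mathfrak{a}_i\oplus\cdots\oplus\mathfrak{a}_c$, norms $\norm{\cdot}_i$ on $\mathfrak{a}_i$, and for $v\in L$ its expansion $v=v_1+\cdots+v_c$ with $v_i\in\mathfrak{a}_i$, so that $l_G(v)=\sup_i\norm{v_i}_i^{1/i}$. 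Fix a finite generating set $S$ of $G$; since the operation on $G$ is $\ast$, an element $g$ with $\norm{g}_G=k\ge 1$ can be written $g=s_1\ast\cdots\ast s_k$ with $s_j\in S\cup S^{-1}$ (the case $g=e$ being trivial), and we regard the $s_j$ as actual matrices in $\mathfrak{u}^0(n,\Q)$.

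For the bound $l_G(g)\le C\norm{g}_G$ I would expand $g=s_1\ast\cdots\ast s_k$ by iterating the Baker--Campbell--Hausdorff formula \eqref{eq_BCH}; as $\mathfrak{g}$ is nilpotent of class $c$, this writes $g$ as a $\Q$-linear combination of iterated brackets of the $s_j$ of length between $1$ and $c$ (equivalently, in a fixed Hall basis of the free nilpotent Lie algebra on symbols $s_1,\dots,s_k$, evaluated at the matrices $s_j$). The coefficient of a fixed bracket monomial of length $\ell$ depends only on the relative order of its (at most $\ell$) leaf-indices and not on $k$: applying the Lie homomorphism of the free algebra that kills all symbols not occurring in that monomial sends $g$ to $s_{i_1}\ast\cdots\ast s_{i_\ell}$, a fixed element, and this homomorphism fixes the monomial and kills all monomials of different content. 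Hence each such coefficient is bounded by a constant $C_\ell$; there are at most $k^\ell$ leaf-sequences and boundedly many monomial shapes of length $\ell$, and a length-$\ell$ bracket of matrices from $S\cup S^{-1}$ has operator norm $\le M^\ell$ with $M=\max_{s\in S\cup S^{-1}}\norm{s}$. A length-$\ell$ bracket lies in $\gamma_\ell(\mathfrak{g})=\mathfrak{a}_\ell\oplus\cdots\oplus\mathfrak{a}_c$, hence contributes to the component $g_i\in\mathfrak{a}_i$ only when $\ell\le i$; summing these contributions gives $\norm{g_i}_i\le\sum_{\ell=1}^{i}C_\ell^{2}M^\ell k^\ell\le C'_i k^i$, and therefore $l_G(g)=\sup_i\norm{g_i}_i^{1/i}\le C'' k=C''\norm{g}_G$. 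This direction is essentially bookkeeping with the BCH expansion.

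For the reverse bound $\norm{g}_G\le C\, l_G(g)$ I would fix a Mal'cev-type basis of $G$, i.e.\ elements $g_1,\dots,g_r\in G$ of weights $\omega_1\le\cdots\le\omega_r$ (so $g_j\in\gamma_{\omega_j}(\mathfrak{g})\setminus\gamma_{\omega_j+1}(\mathfrak{g})$) such that every $g$ has a unique ordered expression $g=g_1^{a_1}\ast\cdots\ast g_r^{a_r}$ with $a_j\in\Z$, powers taken in $(G,\ast)$. The passage from the linear coordinates $(v_1,\dots,v_c)$ to the coordinates $(a_1,\dots,a_r)$ is a polynomial map respecting the filtration by weight, so from $l_G(g)\le t$, i.e.\ $\norm{v_i}_i\le t^i$, one obtains $|a_j|\le C t^{\omega_j}$. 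The key estimate is that a power of a weight-$\omega$ element is cheap: $\norm{g_j^{a}}_G\le C|a|^{1/\omega}$ for $\omega=\omega_j$. This follows by writing $g_j$ as a bounded product of basic commutators of weight $\omega$ in the elements of $S\cup S^{-1}$ and repeatedly applying the ``spreading'' identity that rewrites an $\omega$-fold iterated commutator raised to the power $a$ as a commutator of $\omega$-th roots, of the shape $[u_1^{b},\dots,u_\omega^{b}]$ with $b\asymp|a|^{1/\omega}$, up to a product of commutators of strictly larger weight, which are controlled by downward induction on $\omega$. Combining, $\norm{g}_G\le\sum_{j=1}^r\norm{g_j^{a_j}}_G\le\sum_{j=1}^r C|a_j|^{1/\omega_j}\le\sum_{j=1}^r C\bigl(t^{\omega_j}\bigr)^{1/\omega_j}=C' t=C'\, l_G(g)$.

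I expect the second inequality to be the real obstacle: concretely, establishing the spreading estimate $\norm{g_j^{a}}_G\le C|a|^{1/\omega_j}$ for a weight-$\omega_j$ element, and verifying that the change to Mal'cev coordinates is compatible with the weighted bounds furnished by $l_G$. The first inequality, by contrast, is a direct computation once one notes the uniform boundedness of BCH coefficients and that a length-$\ell$ bracket sits in $\gamma_\ell(\mathfrak{g})$.
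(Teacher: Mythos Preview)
The paper does not give its own proof of this proposition: it simply cites Guivarc'h's original paper \cite{MR369608} (see the sentence immediately preceding Proposition~\ref{prop_Guivarch}). Your proposal is a faithful sketch of that classical argument --- BCH expansion plus the observation $\gamma_\ell(\mathfrak{g})\subset\mathfrak{a}_\ell\oplus\cdots\oplus\mathfrak{a}_c$ for the upper bound, and Mal'cev coordinates together with the commutator-spreading estimate $\norm{g_j^{a}}_G\le C|a|^{1/\omega_j}$ for the lower bound --- so there is nothing to contrast.

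Two minor remarks on the write-up. First, your homomorphism argument for the uniform boundedness of the BCH coefficients is correct, but note that a Hall word may repeat a generator; the projection you describe kills generators outside the \emph{support} of the word, leaving a product of at most $\ell\le c$ factors, which is what makes the coefficient independent of $k$. Second, for the filtration-compatibility of the change from linear to Mal'cev coordinates it is helpful to record explicitly that in $(L,\ast)$ one has $g_j^{a}=a\,g_j$ (all bracket terms in \eqref{eq_BCH} vanish for equal arguments), so the Mal'cev expression $g_1^{a_1}\ast\cdots\ast g_r^{a_r}$ expands via BCH to $\sum_j a_j g_j$ plus bracket terms that are weighted-homogeneous polynomials in the $a_j$; this makes the inductive inversion, and the bound $|a_j|\le C t^{\omega_j}$ from $l_G(g)\le t$, entirely transparent.
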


	Finally, let us address what happens with automorphisms when passing from an $\mathcal{I}$-group to its rational Mal'cev completion or/and its corresponding Lie algebra.
	\begin{prop} \label{prop_KQ_maps}
		Let $G$ be an $\mathcal{I}$-group with Mal'cev basis $\{g_1, \ldots, g_m\}$, then any automorphism $\varphi:G \to G$ extends uniquely to an automorphism $\varphi^\Q: G^\Q \to G^\Q$. Furthermore, under the identification $G^\Q \cong (\mathfrak{g}, \ast)$, group automorphisms of $G^\Q$ are Lie algebra automorphisms of $\mathfrak{g}$ and vice versa.
	\end{prop}
	\begin{proof}
		See \cite[Theorem 9.20]{Khukhro} and \cite[Theorem 10.13(f)]{Khukhro} respectively.
	\end{proof}
	\subsection{Residual Finiteness Growth} \label{sec_RF}
	In this subsection, we will introduce the residual finiteness growth of finitely generated groups and of Lie rings. The notion for groups originally appeared in \cite{bou2010quantifying} and has produced a large number of results as mentioned in the introduction. The notion for Lie rings is however new, although other generalization of residual finiteness growth to certain algebraic structures exist, see for example \cite{bou2010quantifying,franz2017quantifying}, where residual finiteness growth is defined for rings of integers over number fields and finitely generated integral domains respectively.

		Throughout this subsection, let $G$ denote a finitely generated residually finite group with finite generating set $S$. Let $L$ denote a Lie ring and $\norm{\cdot}_L$ a norm or a Guivarc'h length on $L$. We will assume that $\{v\in L\mid \norm{v}_L\leq 1\} \neq \{0\}$.

	\begin{df}
		Let $\mathcal{P}_G$ and $\mathcal{P}_L$ denote subsets of normal subgroups $N\lhd G$ and of ideals $I\lhd L$ respectively. The \emph{divisibility functions} $D_{G, \mathcal{P}_G}: G\setminus\{e\} \to \N\cup\{\infty\}$ and $D_{L, \mathcal{P}_L}: L\setminus\{0\} \to \N\cup\{\infty\}$ are defined by
		\begin{equation*}
			\begin{split}
				D_{G, \mathcal{P}_G}(g) & = \min\{[G:N]\mid g\notin N \lhd G, N\in \mathcal{P}_G\}, \\
				D_{L, \mathcal{P}_L}(v) & = \min\{[L:I]\mid v\notin I \lhd L, I \in \mathcal{P}_L\}
			\end{split}
		\end{equation*}
		where $\min \emptyset = \infty$. If $\mathcal{P}_G$ and $\mathcal{P}_L$ are the sets of all normal subgroups and ideals respectively, then we simply write $D_G$ and $D_L$.
	\end{df}
\noindent	By the definition of residual finiteness, $D_G(g) < \infty$ for every non-trivial element $g\in G$. Given $v\neq 0$, one can use the ideal $mL$ for some $m\in \N$ sufficiently large to show that also $D_L(v) < \infty$.
	
	The residual finiteness growth estimates the divisibility function in terms of the distance of $g$ or $v$ to the origin/neutral element.
	\begin{df}
		The \emph{residual finiteness growth} $\RF_{G, \mathcal{P}_G,S}: \R_{\geq 1} \to \N\cup\{\infty\}$ and $\RF_{L, \mathcal{P}_L, \norm{\cdot}_L}: \R_{\geq 1} \to \N\cup \{\infty\}$ are defined by
		\begin{equation*}
			\begin{split}
				\RF_{G,\mathcal{P}_G,S}(r) & = \max\{D_{G,\mathcal{P}_G}(g)\mid 0<\norm{g}_{G,S} \leq r\}, \\
				\RF_{L,\mathcal{P}_L, \norm{\cdot}_L}(r) & = \max\{D_{L,\mathcal{P}_L}(v)\mid 0<\norm{v}_L\leq r\}.
			\end{split}
		\end{equation*}
	\end{df}
	If we had chosen another finite generating set $T$ of $G$, then there was a constant $C>0$ such that
	$$\dfrac{1}{C}\norm{g}_{G,T} \leq \norm{g}_{G,S} \leq C\norm{g}_{G,T},$$
	therefore,
	$$\RF_{G,T}\left(\dfrac{1}{C} r \right) \leq \RF_{G,S}(r) \leq \RF_{G,T}\left(Cr\right).$$
	Now, we obtain a definition of residual finiteness growth of a group, written as $\RF_{G, \mathcal{P}_G}$ or $\RF_G$ for $D_G$, independent of the choice of generating set if we consider the residual finiteness growth up to the following equivalence relation:
	\begin{df}
		Let $f,g: \mathbb{R}_{\geq 1} \to \mathbb{R}_{\geq 1}$ be non-decreasing functions. We write 
		\begin{align*}f &\preceq g \Leftrightarrow \exists C >0: \forall r \geq \max\{1,1/C\}: f(r) \leq Cg(Cr);\\
			f&\approx g \Leftrightarrow f\preceq g \text{ and } g \preceq f.\end{align*}
	\end{df}
	By the equivalence of norms on finite-dimensional vector spaces, also the residual finiteness growth of $L$ is well-defined up to this equivalence relation. We denote this invariant by $\RF_{L, \mathcal{P}_L}^{\nom}$, or simply $\RF_{L}^{\nom}$ if we take $D_L$. By Equation \eqref{eq_equiv_Guiv}, we similarly obtain $\RF_{L, \mathcal{P}_L}^{\Gui}$ and $\RF_{L}^{\Gui}$ for Guivarc'h lengths. We have the following relation:
	\begin{lemma} \label{lem_link_Gui_norm}
		Let $L$ be a Lie ring of nilpotency class $c$, and let $\mathcal{P}_L$ denote a subset of ideals of $L$. We have
		\begin{equation*}
			\RF_{L, \mathcal{P}_L}^{\nom}(r) \preceq \RF_{L, \mathcal{P}_L}^{\Gui}(r) \preceq \RF_{L, \mathcal{P}_L}^{\nom}(r^c) .
		\end{equation*}
	\end{lemma}
	\begin{proof}
		This is a direct consequence of Equation \eqref{eq_norm_to_Gui}. 
	\end{proof}
	Since polylogarithmic functions satisfy $\log^{k}(r^c) \approx \log^{k}(r)$ for any $k\in\N$, this lemma will lead to $\RF_L^{\Gui} = \RF_L^{\nom}$ after proving our main result. There is one property of residual finiteness growth that will play an instrumental role throughout this article.
	\begin{df}
		Let $H$ and $G$ be groups. Let $\varphi: H \to \Out(G)$ be a homomorphism. We say a normal subgroup $N$ of $G$ is $H$-invariant or $\varphi$-invariant if $\varphi(h)(N) = N$ for all $h\in H$.
	\end{df}
\begin{rem}
	Note that $\varphi(h)(N)$ is well-defined. Indeed, since $N$ is normal, $\psi(N) = N$ for any inner automorphism, so $\psi_1(N) = \psi_2(N)$ for any $\psi_1$ and $\psi_2$ in the set of automorphisms defined by $\varphi(h)$. 
\end{rem}
Let $1 \to G \to \Gamma \to H \to 1$ be a short exact sequence with projection $\pi: \Gamma \to H$. Given $h\in H$ and $g\in \Gamma$ with $\pi(g) = h$, then 
$$ \psi_g : G \to G : k \mapsto gkg^{-1}$$
defines an automorphism of $G$. The map 
$$\varphi: H \to \Out(G) : h \mapsto  \psi_g\Inn(G) \text{ with }\pi(g) = h$$
gives a well-defined homomorphism. 
\begin{prop} \label{prop_RF_fin_ext_Hinv}
	Let $\Gamma$ be a residually finite, finitely generated group and $1 \to G \to \Gamma \to H \to 1$ be a short exact sequence with $|H| < \infty$. Let $\mathcal{P}_{H\curvearrowright G}$ denote the set of $H$-invariant normal subgroups in $G$. Then, $\RF_{\Gamma} = \RF_{G, \mathcal{P}_{H\curvearrowright G}}$.
\end{prop}
\begin{proof}
	See \cite[Theorem 3.3]{math_virt_ab}.
\end{proof}
In our case, $\Gamma$ will be virtually nilpotent and thus we can choose $G$ to be an $\mathcal{I}$-group, and obtain $\RF_{\Gamma} = \RF_{G, \mathcal{P}_{H\curvearrowright G}}$. The calculation of $\RF_{G, \mathcal{P}_{H\curvearrowright G}}$ will be the focus of the rest of the article, where we only assume that $H$ is a finitely generated subgroup of $\Aut(G)$. By taking 
$$\varphi(H) = \dfrac{\tilde{H}}{\Inn(G)},$$
for some $\tilde{H} \le \Aut(G)$, we observe that $H$-invariance and $\tilde{H}$-invariance are formally the same. Furthermore, $\tilde{H} \subset \Aut(G)$ is finitely generated as $H$ is finite and $\Inn(G)$ is finitely generated.

\section{Commensurable nilpotent groups} \label{sec_invariance}
In this section, we will prove Theorem \ref{thm_intro_Q_inv2}. 
When $\RF_{G, \mathcal{P}_{H\curvearrowright G}}$ is given with $G$ an $\mathcal{I}$-group, this result will allow us to replace $G$ by another $H$-invariant $\mathcal{I}$-group. In the next section, we will replace $G$ with an $H$-invariant LR-group to establish a connection with the residual finiteness growth in Lie rings. As a direct consequence of Theorem \ref{thm_intro_Q_inv2}, we will also argue that commensurable nilpotent groups have the same residual finiteness growth. We begin by recalling two different constructions of normal subgroups.
\begin{df}
	Let $H \subset G$ be a subgroup.
	\begin{itemize}
		\item The core of $H$ in $G$, denoted by $\core_G(H)$, is the largest normal subgroup of $G$ contained in $H$. It is given by $$\core_G(H) := \bigcap_{g\in G} gHg^{-1}.$$
		\item The normalizer of $H$ in $G$, denoted by $N_G(H)$, is the largest subgroup of $G$ such that $H$ is normal in it. It is given by $$N_G(H) = \{g\in G\mid gHg^{-1} = H\}.$$
	\end{itemize}
\end{df}
\noindent	Note that if $H\leq_f G$, then $\core_G(H) \lhd_f G$. The following statement given in \cite[Lemma 1.7]{grunewald1988subgroups} is crucial in the proof.
\begin{lemma}
	Let $G$ be an $\mathcal{I}$-group of nilpotency class $c$ and Hirsch length $h$. If $H$ is a subgroup of $G$ such that $[G: N_G(H)] \mid p$ for some prime $p$, then $[H: \core_G(H)] \mid p^{hc(c-1)/2}$.
\end{lemma}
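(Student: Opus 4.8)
The plan is to reduce to the case $[G:N_G(H)]=p$, realise $\core_G(H)$ as an explicit finite intersection of conjugates of $H$, and then induct on the nilpotency class by factoring out the last term of the lower central series.

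First, if $N_G(H)=G$ then $H$ is normal, $\core_G(H)=H$, and the index is $1=p^{0}$; so assume $[G:N_G(H)]=p$. Put $N:=N_G(H)$. Being of prime index, $N$ is a maximal subgroup of the nilpotent group $G$, hence normal, so $G/N\cong\Z/p$; fix $g\in G$ with $G=N\langle g\rangle$ and $g^{p}\in N$. The conjugates of $H$ are then exactly $H,H^{g},\dots,H^{g^{p-1}}$, whence $\core_G(H)=\bigcap_{i=0}^{p-1}H^{g^{i}}$.

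Now induct on the nilpotency class $c$; the case $c=1$ is immediate since $G$ is then abelian and $\core_G(H)=H$. For the step, set $Z:=\Gamma_c(G)$, which is central and free abelian of some rank $s=h-h(G/Z)$, and work in $\bar G:=G/Z$, of class at most $c-1$ and Hirsch length $h-s$. The image $\bar H$ still satisfies $[\bar G:N_{\bar G}(\bar H)]\mid p$, so by the induction hypothesis $[\bar H:\core_{\bar G}(\bar H)]$ divides $p^{(h-s)(c-1)(c-2)/2}$. Writing $D:=\bigcap_{x\in G}H^{x}Z$, one checks that $D\lhd G$, that $Z\le D\le HZ$, that $D/Z=\core_{\bar G}(\bar H)$, and that $\core_G(H)\le H\cap D$; moreover $[H:H\cap D]=[\bar H:\core_{\bar G}(\bar H)]$, so this first factor is already controlled. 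It remains to bound the index $[H\cap D:\core_G(H)]$. Here I would use that $g$ centralises $Z$, so $Z\cap H^{g^{i}}=Z\cap H$ for all $i$; for $y\in H\cap D\le HZ$ one writes $y=u_iz_i$ with $u_i\in H^{g^{i}}$ and $z_i\in Z$, and the assignment $y\mapsto\bigl(z_i+(Z\cap H)\bigr)_{i}$ defines a homomorphism $H\cap D\to(Z/(Z\cap H))^{p}$ whose kernel is exactly $\core_G(H)$ (the coordinate $i=0$ being always trivial, so one in fact lands in $(Z/(Z\cap H))^{p-1}$). Thus everything reduces to bounding the order of the image of this homomorphism.

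The crux, and the step I expect to be the real obstacle, is this last bound. One must first show that $Z/(Z\cap H)$ is a finite $p$-group of rank at most $s$ — precisely here one uses that the index of the normaliser is a prime, not merely that there are finitely many conjugates — and then that the image of $H\cap D$ inside the product is still a $p$-group whose $p$-exponent is controlled not by the number $p$ of factors but only by the ranks and depths of the lower central factors of $G$. The natural tool is the Hall–Petrescu collection formula, needed because conjugation by $g$ acts linearly on each graded piece $\Gamma_j(G)/\Gamma_{j+1}(G)$ only modulo higher-order terms. Accumulating the contribution of each of the $c$ layers of the lower central series — a layer of rank $r_j$ contributing a $p$-power with exponent bounded by $r_j$ times a quantity linear in its depth — and adding the inductive exponent $(h-s)(c-1)(c-2)/2$ should telescope to the total exponent $hc(c-1)/2=h\sum_{j=1}^{c-1}j$, closing the induction.
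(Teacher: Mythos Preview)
The paper does not prove this lemma at all: it is quoted verbatim as \cite[Lemma 1.7]{grunewald1988subgroups} and used as a black box in the proof of Proposition~\ref{prop_Q_inv_p}. So there is no ``paper's own proof'' to compare against; any comparison would have to be with Grunewald--Segal's original argument.

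As for your sketch itself, the reduction and the inductive set-up are sound: the normaliser of prime index is normal, the core is the intersection of $p$ conjugates, and passing to $G/\Gamma_c(G)$ to invoke the inductive hypothesis is the natural move. The homomorphism $H\cap D\to (Z/(Z\cap H))^{p-1}$ with kernel $\core_G(H)$ is correctly identified.

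The gap is exactly where you locate it, and it is substantial. Two points deserve emphasis. First, the assertion that $Z/(Z\cap H)$ is a finite $p$-group is not obvious and you give no argument for it; you would need to extract this from the hypothesis $[G:N_G(H)]=p$, and in any case what you actually need is only that the \emph{image} of your homomorphism is a $p$-group of bounded exponent, which is a weaker (and more plausible) claim. Second, the image sits inside a product of $p-1$ copies, so its order is \emph{a priori} of size growing with $p$; your bound must come from relations among the coordinates $z_1,\dots,z_{p-1}$ forced by the conjugation action of $g$, and this is precisely what a Hall--Petrescu or commutator-collection argument would supply. You gesture at this but do not carry it out, and the final ``should telescope'' is not backed by any computation --- indeed, if you try to match $(h-s)(c-1)(c-2)/2$ plus your layer contribution against $hc(c-1)/2$, the arithmetic does not close cleanly without a sharper bound on that image than ``rank $\le s$''. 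So the proposal is a reasonable outline of an inductive strategy, but the decisive estimate is missing, not merely unwritten.
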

This has the following direct consequence.
\begin{lemma}
	Let $G_1$ and $G_2$ be two $\mathcal{I}$-groups. If $G_1 \lhd G_2$ with $[G_2:G_1] = p$, then there exists a constant $C>0$ such that $[G_2: \core_{G_2}(N)] \leq C [G_1:N]$ for all $N\lhd_f G_1$.
\end{lemma}
\begin{proof}
	We will argue that $[G_2: \core_{G_2}(N)] \leq p^{1+hc(c-1)/2}[G_1:N]$, where $h$ and $c$ are the Hirsch and step length of $G_2$ respectively. Since $N_{G_2}(N)$ is the largest group $H$ such that $N\lhd H \leq G_2$ and $N\lhd G_1$, it holds that $G_1 \leq N_{G_2}(N)$. In particular, 
	$$[G_2: N_{G_2}(N)] \mid [G_2: G_1] \mid p$$
	and thus the previous lemma implies that $[N: \core_{G_2}(N)] \leq p^{hc(c-1)/2}$. Hence, as claimed,
	$$[G_2: \core_{G_2}(N)] \leq [G_2: G_1] [G_1: N] [N: \core_{G_2}(N)] \leq p^{1+hc(c-1)/2}[G_1:N].$$
\end{proof}
Applying this lemma inductively, we obtain the result below.
\begin{prop}
	Let $G_1 \lhd_f G_2$ be two $\mathcal{I}$-groups. There exists a constant $C>0$ such that $[G_2: \core_{G_2}(N)] \leq C [G_1:N]$ for all $N\lhd_f G_1$.
\end{prop}
\begin{proof}
	First, we claim that there exists a chain of normal subgroups
	$$G_1 = H_k \lhd H_{k-1} \lhd \ldots \lhd H_1 \lhd H_0 = G_2,$$
	such that $[H_{i-1}:H_{i}]$ is prime for all $i \in \{1, 2, \ldots ,k\}$. 		
	
	In order to show the claim, set $H_0 = G_2$. Since $H_0$ is nilpotent and $G_1 \lhd_f H_0$, the quotient group $H_0/G_1$ exists and is a finite nilpotent group. Hence, it is a direct sum of Sylow $p$-subgroups $P_1 \oplus P_2 \oplus \ldots \oplus P_n$. Take any maximal subgroup $N$ in $P_1$, then \cite[Lemma 4.4(a)]{Khukhro} implies that $N$ is normal in $P_1$ and $[P_1: N]$ is prime. Now, we define $H_1$ as the inverse image of $N\oplus P_2\oplus \ldots \oplus P_n$ under the projection $H_0 \to H_0/G_1$. It is clear that $H_1 \lhd H_0$ and $[H_0:H_1]$ is prime. We can now exploit that $G_1 \lhd_f H_1$ is a normal subgroup of smaller index to inductively construct the desired chain of normal subgroups, proving the claim.
	
	Now, take $N \lhd_f G_1$. We know that
	$$[H_{k-1}: \core_{H_{k-1}}(N)] \leq C_{k-1} [G_1:N]$$
	for some $C_{k-1}>0$ depending on $G_1$ and $H_{k-1}$ but independent of $N$. We also see that
	$$[H_{k-2}: \core_{H_{k-2}}(\core_{H_{k-1}}(N))] \leq C_{k-2} [H_{k-1}: \core_{H_{k-1}}(N)] \leq C_{k-2}C_{k-1} [G_1:N].$$
	Since $\core_{H_{k-2}}(\core_{H_{k-1}}(N))$ is a normal subgroup of $H_{k-2}$ contained in $N$, we know by definition that $\core_{H_{k-2}}(\core_{H_{k-1}}(N)) \leq \core_{H_{k-2}}(N),$ which implies
	$$[H_{k-2}: \core_{H_{k-2}}(N)] \leq [H_{k-2}: \core_{H_{k-2}}(\core_{H_{k-1}}(N))] \leq C_{k-2}C_{k-1} [G_1:N].$$
	Now proceed inductively to conclude.
\end{proof}
\begin{proof}[Proof of Theorem \ref{thm_intro_Q_inv2}]
	Suppose $G_1 \lhd_f G_2$ are given $\mathcal{I}$-groups. We must prove that $\RF_{G_1, \mathcal{P}_{H\curvearrowright G_1}} = \RF_{G_2, \mathcal{P}_{H\curvearrowright G_2}}$, where $H\leq \Aut(G_2)$ and where $G_1$ is $H$-invariant.	To show this, fix finite generating sets $S$ and $T$ of $G_1$ and $G_2$ respectively. We may assume that $S \subset T$. Since $G_1$ has finite index in $G_2$, we know that there exists a constant $C>0$ such that $\norm{g}_{G_1,S} \leq C \norm{g}_{G_2,T}$ for all $g\in G_1$ by \cite[Corollary 5.4.5]{loh2017geometric}.
	
	First, let us show that $\RF_{G_1, \mathcal{P}_{H\curvearrowright G_1}} \preceq \RF_{G_2, \mathcal{P}_{H\curvearrowright G_2}}$. Take $e\neq g_1 \in G_1$ with $\norm{g_1}_{G_1,S} \leq r$ arbitrary. Since $S \subset T$, we see that $\norm{g_1}_{G_2,T} \leq r$. By definition of $\RF_{G_2, \mathcal{P}_{H\curvearrowright G_2}}$, we can find an $H$-invariant normal subgroup $N$ such that $g_1 \notin N$ and
	$$[G_2: N] \leq  \RF_{G_2, \mathcal{P}_{H\curvearrowright G_2}}(r).$$
	Now, $g_1 \notin N\cap G_1 \lhd G_1$. Furthermore, $N\cap G_1$ is $H$-invariant as the intersection of $H$-invariant subgroups. We thus observe that
	$$D_{G_1, \mathcal{P}_{H \curvearrowright G_1}}(g_1) \leq [G_1:N\cap G_1] \leq [G_2:N] \leq \RF_{G_2, \mathcal{P}_{H\curvearrowright G_2}}(r).$$
	Conclude by taking the maximum over all elements $g_1 \in G_1$ with $0 < \norm{g_1}_{G_1,S} \leq r$.
	
	We will proceed to show that $\RF_{G_1, \mathcal{P}_{H\curvearrowright G_1}} \succeq \RF_{G_2, \mathcal{P}_{H\curvearrowright G_2}}$. For this, take $g_2\in G_2$ with $0 < \norm{g_2}_{G_2,T} \leq r$ arbitrarily. If $g_2\notin G_1$, then $D_{G_2, \mathcal{P}_{H \curvearrowright G_2}}(g_2) \leq [G_2:G_1]$, so from now on we assume that $g_2\in G_1$. In particular, $\norm{g_2}_{G_1,S}\leq Cr$. By definition, there exists an $H$-invariant normal subgroup $N$ of $G_1$ such that $g_2 \notin N$ and 
	$$[G_1: N] \leq \RF_{G_1, \mathcal{P}_{H\curvearrowright G_1}}(Cr).$$
	We claim that $\core_{G_2}(N)$ is $H$-invariant. Indeed, let $\xi \in H$, then
	$$\xi(\core_{G_2}(N)) = \bigcap_{g\in G_2} \xi(g)N\xi(g)^{-1} = \core_{G_2}(N),$$
	since $N$ is $H$-invariant. As $g_2 \notin \core_{G_2}(N) \leq N$, we obtain
	$$D_{G_2, \mathcal{P}_{H\curvearrowright G_2}}(g_2) \leq [G_2: \core_{G_2}(N)] \leq C_2[G_1: N] \leq C_2 \RF_{G_1, \mathcal{P}_{H\curvearrowright G_1}}(Cr)$$
	for a fixed $C_2>0$ by the previous result. 
	Taking the maximum of $D_{G_2, \mathcal{P}_{H\curvearrowright G_2}}(g_2)$ over all $g_2\in G_2$ with $0<\norm{g_2}_{G_2,T} \leq r$ shows that
	$$\RF_{G_2}(r) \leq \max\{[G_2:G_1],  C_2  \RF_{G_1}(Cr)\} \preceq \RF_{G_1}(r),$$
	ending the proof.
\end{proof}
Specifically for nilpotent groups, this result implies that the residual finiteness growth is a commensurability invariant for them.
\begin{df}
	Two groups $G_1$ and $G_2$ are called \emph{commensurable} if there exist subgroups $H_1 \leq_f G_1$ and $H_2 \leq_f G_2$ such that $H_1\cong H_2$ are isomorphic.
\end{df}
\begin{thm} \label{thm_comm}
	Let $G_1$ and $G_2$ denote two finitely generated nilpotent groups. If $G_1$ and $G_2$ are commensurable, then $\RF_{G_1} = \RF_{G_2}$.
\end{thm}
Let us first show the following well-known result, see e.g. \cite[Theorem 2.1]{MR283082}, which we reprove for the convenience of the reader. 
\begin{lemma}\label{lem_baumslag}
	Let $G$ be a finitely generated nilpotent group with torsion subgroup $T$. There exists an embedding $i: G \to G/T\times Q$ for some finite group $Q$ such that $i(G) \leq_f G/T\times Q$.
\end{lemma}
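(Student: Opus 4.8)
The plan is to realize the embedding concretely as a diagonal-type map into $G/T \times G/N$, where $N \lhd_f G$ is chosen so that $N \cap T = \{e\}$, and then take $Q = G/N$. I will use two standard facts about finitely generated nilpotent groups: such a group is residually finite, and its torsion subgroup $T$ is finite.

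First I construct $N$. Since $G$ is residually finite, for each non-trivial $t \in T$ I pick a finite-index normal subgroup $N_t \lhd_f G$ with $t \notin N_t$, and set $N := \bigcap_{t \in T \setminus \{e\}} N_t$. As $T$ is finite this is a finite intersection, so $N \lhd_f G$, and $N \cap T = \{e\}$ by construction. Then I define
$$i \colon G \longrightarrow G/T \times G/N, \qquad i(g) = (gT, gN),$$
whose kernel is $T \cap N = \{e\}$, so that $i$ is injective.

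It remains to check that $i(G)$ has finite index. Composing $i$ with the projection onto the first factor recovers the quotient map $G \to G/T$, so $i(G)$ surjects onto $G/T$ and hence $i(G)\cdot(\{e\}\times G/N) = G/T \times G/N$. The standard index identity $[P:H] = [K : H\cap K]$, valid whenever $HK = P$, then gives
$$[\,G/T \times G/N : i(G)\,] = [\,\{e\}\times G/N : i(G)\cap(\{e\}\times G/N)\,].$$
Since $N \cap T = \{e\}$, the intersection $i(G)\cap(\{e\}\times G/N) = \{(eT, tN) : t \in T\}$ is a subgroup of order $|T|$, so this index equals $[G:N]/|T| < \infty$. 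Taking $Q = G/N$ finishes the proof.

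The argument is routine; the only point needing a little care is the last computation, where one must not conflate the index of $i(G)$ inside the product with indices inside $G$ itself. Modulo that, the lemma is essentially a repackaging of the residual finiteness of $G$ together with the finiteness of $T$.
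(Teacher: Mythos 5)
Your proof is correct and follows essentially the same route as the paper: both use residual finiteness of $G$ together with finiteness of $T$ to build a finite quotient $G \to Q$ injective on $T$, define the diagonal-type embedding into $G/T \times Q$, and verify finite index via an index identity in the product (the paper bounds the index by $|Q|$, you compute the slightly sharper value $[G:N]/|T|$, but the argument is the same). The only cosmetic difference is that you construct the finite-index normal subgroup $N$ explicitly as a finite intersection, whereas the paper directly invokes the existence of a suitable homomorphism $\psi: G \to Q$.
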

\begin{proof}
	Since $T$ is finite and $G$ is residually finite, we can find a homomorphism $\psi: G \to Q$ to a finite group $Q$, such that $\psi(t) \neq e$ for all non-trivial $t\in T$. Now, the map $i$ can be taken as
	$$i: G \to G/T \times Q: g \mapsto (gT, \psi(g)).$$
	If $\pi$ denotes the projection of $G/T\times Q$ onto $G/T$, then $\pi(i(G)) = G/T$, so
	$$[G/T\times Q: i(G)] = [G/T: \pi(i(G))]\cdot [Q: Q\cap i(G)] \leq 1 \cdot |Q| < \infty. $$
\end{proof}
\begin{proof}[Proof of Theorem \ref{thm_comm}]
	We first show that for general nilpotent groups $G_1 \leq_f G_2$ implies $\RF_{G_1} = \RF_{G_2}$. Let $T$ denote the torsion subgroup of $G_2$. Consider the embedding $i: G_2 \to G_3\times Q$ with $G_3 = G_2/T$ of Lemma \ref{lem_baumslag}. Identify $G_2$ with its embedding $i(G_2)$ in $G_3\times Q$. Now, we have inclusions
	$$G_1 \leq_f G_2 \leq_f G_3\times Q.$$
	The intersection $G = \core_{G_3}(G_1\cap G_3)$ is torsion-free as a subgroup of the torsion-free group $G_3$, and clearly of finite index in $G_1$.
	Hence, $G$ is also a finite index normal subgroup of $G_3$.
	In particular, Theorem \ref{thm_intro_Q_inv2} (with $H$ trivial) states that $\RF_G = \RF_{G_3}$. By all the inclusions above, we have that
	$$\RF_G \preceq \RF_{G_1} \preceq \RF_{G_2} \preceq \RF_{G_3\times Q} = \max\{\RF_{G_3}, \RF_{Q}\} = \RF_{G_3}.$$
	Thus, $\RF_{G_1} = \RF_{G_2}$ follows immediately from $\RF_G = \RF_{G_3}$.
	
	Now assume that $G_1$ and $G_2$ are commensurable nilpotent groups. Take groups $H_1$ and $H_2$ such that $H_1 \leq_f G_1$ and $H_2 \leq_f G_2$ with $H_1\cong H_2$. By subgroup inclusions, we have
	$$\RF_{G_1} \succeq \RF_{H_1} = \RF_{H_2} \preceq \RF_{G_2}.$$ Applying the previous statement to the equation above implies that $\RF_{G_1} = \RF_{G_2}$ as desired.
\end{proof}
We have the following two immediate consequences, where the first thus gives an alternative proof for the one provided in \cite{MarkPubl}. 
\begin{cor}
	Let $G$ be a finitely generated nilpotent group with torsion subgroup $T$, then $\RF_G = \RF_{G/T}$.
\end{cor}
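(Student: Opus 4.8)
The final statement to prove is the Corollary: for a finitely generated nilpotent group $G$ with torsion subgroup $T$, we have $\RF_G = \RF_{G/T}$.

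The plan is to observe that this follows immediately from the theorem just proved (commensurability invariance of $\RF$ within finitely generated nilpotent groups) together with Lemma \ref{lem_baumslag}. Concretely, I would first invoke Lemma \ref{lem_baumslag} to obtain an embedding $i: G \to G/T \times Q$ with $Q$ finite and $i(G) \leq_f G/T \times Q$. Since $G/T$ is itself a finite-index subgroup of $G/T \times Q$ (embedded as $G/T \times \{e\}$), the groups $G$ and $G/T$ are both commensurable with $G/T \times Q$, hence commensurable with each other; all three are finitely generated nilpotent. Applying the theorem then yields $\RF_G = \RF_{G/T\times Q} = \RF_{G/T}$, where for the last equality one uses that $\RF_{A\times Q} = \max\{\RF_A, \RF_Q\} = \RF_A$ for $Q$ finite (a finite group has bounded, hence negligible, residual finiteness growth), exactly as used in the proof of the preceding theorem.

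Alternatively, and perhaps more transparently, I would not even need to reprove commensurability: the preceding theorem already has $\RF_{G_1} = \RF_{G_2}$ for commensurable finitely generated nilpotent $G_1, G_2$, so I simply note $G$ and $G/T$ are commensurable (via the common finite-index subgroup $G \cap (G/T \times \{e\})$ sitting inside $G/T \times Q$, as in the previous proof) and cite the theorem directly. The corollary is then a one-line consequence.

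There is essentially no obstacle here: all the work has been done in the theorem. The only point requiring a word of care is the passage $\RF_{G/T \times Q} = \RF_{G/T}$, i.e. that adjoining a direct factor that is a finite group does not change the residual finiteness growth; this is routine since for a finite group $Q$ the divisibility function $D_Q$ is bounded by $|Q|$, so $\RF_Q$ is bounded, and for a direct product one has $D_{A\times B}((a,b)) \leq \min\{D_A(a), D_B(b)\}$ when one coordinate is nontrivial, giving $\RF_{A\times B} \approx \max\{\RF_A, \RF_B\}$.

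\begin{proof}
	By Lemma \ref{lem_baumslag}, there is an embedding $i: G \to G/T \times Q$ with $Q$ a finite group and $i(G) \leq_f G/T \times Q$. Since $G/T \times \{e\} \leq_f G/T \times Q$ as well, the groups $G$ (identified with $i(G)$) and $G/T$ are both finite-index subgroups of the finitely generated nilpotent group $G/T \times Q$, hence are commensurable. The previous theorem therefore gives $\RF_G = \RF_{G/T\times Q} = \RF_{G/T}$, where the last equality uses that $\RF_{G/T\times Q} = \max\{\RF_{G/T}, \RF_Q\} = \RF_{G/T}$ because $\RF_Q$ is bounded for the finite group $Q$.
\end{proof}
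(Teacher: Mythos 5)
Your proof is correct and follows exactly the route the paper intends: the corollary is presented there as an immediate consequence of Lemma \ref{lem_baumslag} together with the commensurability theorem. One small simplification worth making: once you have observed that $G$ and $G/T$ are commensurable (both being finite-index subgroups of $G/T\times Q$), you can apply the theorem directly to the pair $(G, G/T)$, both of which are manifestly finitely generated nilpotent; the detour through $\RF_{G/T\times Q}$ implicitly asks for $G/T\times Q$ to be nilpotent (true if one replaces $Q$ by the nilpotent image $\psi(G)$, but an unnecessary extra point), and is better avoided.
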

\begin{proof}
	This follows from Lemma \ref{lem_baumslag}.
\end{proof}
\begin{cor} \label{cor_from_I_to_LR}
	Let $G_1$ be a finitely generated nilpotent group, then there exists an LR-group $G_2$ such that $\RF_{G_1} = \RF_{G_2}$.
\end{cor}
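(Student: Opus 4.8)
The plan is to reduce to the torsion-free case and then transport the problem into $\mathfrak{u}^0(n,\Q)$, where Lemma \ref{lem_LR_inside} produces a commensurable LR-group to which Theorem \ref{thm_Q_inv} applies.

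First I would let $T$ denote the torsion subgroup of $G_1$ and invoke the preceding corollary (equivalently Lemma \ref{lem_baumslag} together with subgroup inclusions) to obtain $\RF_{G_1} = \RF_{G_1/T}$. Since $G_1/T$ is finitely generated, torsion-free and nilpotent, it is an $\mathcal{I}$-group, so it suffices to prove the statement under the additional assumption that $G_1$ is itself an $\mathcal{I}$-group. By Theorem \ref{thm_embedding} and Equation \eqref{eq_Phi}, $G_1$ is then isomorphic to a finitely generated subgroup $H \leq (\mathfrak{u}^0(n,\Q),\ast)$ for some $n\in\N$, and this isomorphism preserves $\RF$, so I would identify $G_1$ with $H$.

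Next I would apply Lemma \ref{lem_LR_inside} to $H$ to obtain an LR-group $G_2 \leq \mathfrak{u}^0(n,\Q)$ with $G_2 \leq_f H$ (the other inclusion in that lemma would work equally well). Being a finitely generated subgroup of $\mathfrak{u}^0(n,\Q)$, the group $G_2$ is torsion-free and nilpotent, hence an $\mathcal{I}$-group, and it is commensurable with $H$ by construction. Theorem \ref{thm_Q_inv} then gives $\RF_{G_2} = \RF_{H} = \RF_{G_1}$, which is exactly the claim, with $G_2$ the desired LR-group.

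I do not expect a genuine obstacle here: the corollary is a direct assembly of results already in place, and the only nontrivial inputs are Lemma \ref{lem_LR_inside} (existence of an LR-group of finite index in a given finitely generated subgroup of $\mathfrak{u}^0(n,\Q)$) and Theorem \ref{thm_Q_inv} (commensurability invariance of $\RF$ for $\mathcal{I}$-groups), both of which are available. The only point demanding a line of care is the verification that the LR-group $G_2$ returned by Lemma \ref{lem_LR_inside} is indeed an $\mathcal{I}$-group, so that Theorem \ref{thm_Q_inv} is applicable; this is immediate from its realization inside the unipotent algebra $\mathfrak{u}^0(n,\Q)$.
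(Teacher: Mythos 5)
Your proposal is correct and follows essentially the same route the paper intends: reduce to the torsion-free quotient via the preceding corollary (itself a consequence of Lemma \ref{lem_baumslag}), realize it inside $(\mathfrak{u}^0(n,\Q),\ast)$, apply Lemma \ref{lem_LR_inside} to obtain a commensurable LR-group, and conclude with the commensurability invariance of Theorem \ref{thm_Q_inv}. The paper presents this as an immediate consequence; your write-up simply makes the assembly explicit.
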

\begin{proof}
	The previous corollary says that $\RF_{G_1}$ equals $\RF_{G_3}$ for some $\mathcal{I}$-group $G_3$. By \cite[Chapter 6, Part B]{sega}, we know that every $\mathcal{I}$-group is commensurable with some LR-group.
\end{proof}

\section{Residual finiteness growth of nilpotent Lie rings} \label{sec_Lie_inv}
	By Proposition \ref{prop_RF_fin_ext_Hinv}, we know that $\RF_{\Gamma}$ for a virtually nilpotent group equals $\RF_{G_1, \mathcal{P}_{H\curvearrowright G_1}}$ for some $\mathcal{I}$-group $G_1$ and finitely generated $H\leq \Aut(G_1)$. Furthermore, by Theorem \ref{thm_intro_Q_inv2}, we know that $\RF_{G_1, \mathcal{P}_{H\curvearrowright G_1}}$ equals $\RF_{G_2, \mathcal{P}_{H\curvearrowright G_2}}$ if $G_2 \lhd_f G_1$ is $H$-invariant. This section will allow us on the one hand to  assume that $G_2$ is an $H$-invariant LR-group and on the other hand translate $\RF_{G_2, \mathcal{P}_{H\curvearrowright G_2}}$ to an equivalent function over its underlying Lie ring structure as in Theorem \ref{thm_intro_RFG_RFL}. These statements will be proven in the third part, using the technicalities of the first two parts.

	\subsection{From Ideal to Normal Subgroup} \label{sec_ideal_to_normal}
	In this section, we will focus on proving Proposition \ref{prop_ideal_to_normal} as a tool for Theorem \ref{thm_intro_RFG_RFL}.

	\begin{prop} \label{prop_ideal_to_normal}
		Let $G\leq (\mathfrak{u}^0(n, \Q), \ast)$ be an LR-group with underlying Lie ring structure $L$. There exists a constant $M_1>0$ such that if $I \lhd_f L$, then there exists a normal subgroup $N \subset I$ of $G$ such that $[G:N] \leq M_1[L:I]$. Furthermore, if $H \leq \Aut(L^\Q)$ and $L$ and $I$ are $H$-invariant, then $N$ can be chosen $H$-invariant.
	\end{prop}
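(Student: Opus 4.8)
The plan is to start from an ideal $I \lhd_f L$ and build a normal subgroup of $G$ contained in $I$ whose index in $G$ is controlled linearly by $[L:I]$. The natural candidate is $N = \exp(I) \cap G$, where we think of everything inside $(\mathfrak{u}^0(n,\Q),\ast)$ so that $\exp$ is just the identity on the underlying set and $I$ is simultaneously an additive subgroup. The first issue is that $I$, being merely an ideal of $L$, need not be closed under the BCH product $\ast$; but since $I$ has finite index in $L$, the subgroup $N$ it generates under $\ast$ inside $L$ still has finite index, and because $I$ is an ideal it absorbs all the higher Lie-bracket correction terms $q_i(M_1,M_2)$ appearing in \eqref{eq_BCH}. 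So the first step is to verify that $I$ itself is already $\ast$-closed, or failing that, that a bounded-index $\ast$-subgroup $\tilde N \subset I$ exists with $[L : \tilde N] \leq M_1' [L:I]$, where $M_1'$ depends only on $n$ and the nilpotency class $c$ — this bound comes from the fact that adjusting an additive subgroup to be $\ast$-closed only requires quotienting by brackets that live in $\gamma_2(L)$, and the ``defect'' is absorbed after at most $c$ correction steps, each costing a bounded power.

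The second step is to intersect with $G$: set $N_0 = \tilde N \cap G$. Since $[L : \tilde N] < \infty$ and $L^\Q = G^\Q$ (both $G$ and $L$ span the same rational algebra, as $G$ is an LR-group with underlying ring $L$), a standard lattice argument gives $[G : N_0] \leq [L : \tilde N]$, using that $G$ and $L$ are commensurable as $\Z$-modules / as groups — more precisely, $[G:G\cap \tilde N]$ divides (a bounded multiple of) $[L:\tilde N]$ once one fixes the bounded index $[L:L\cap G^{\ast\text{-closure}}]$, which again only depends on the ambient data. The third step is to pass from $N_0$ to a genuinely \emph{normal} subgroup of $G$ contained in $I$: here one replaces $N_0$ by its core $\core_G(N_0)$. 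To bound $[G : \core_G(N_0)]$ we invoke the same Grunewald--Segal type estimate already used in Section~\ref{sec_invariance}: for a finitely generated nilpotent group, a subgroup of index $m$ has core of index at most $m^{f(h,c)}$ for a fixed function of the Hirsch length and class. This would give only a \emph{polynomial} bound $[G:\core_G(N_0)] \leq [L:I]^{f(h,c)}$, not the linear bound claimed, so this naive route is insufficient.

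The main obstacle is therefore obtaining \emph{linearity} in $[L:I]$, not just a polynomial bound, and I expect the real argument to exploit the ideal structure more carefully rather than passing through the core. The better approach: because $I$ is an \emph{ideal}, $[L,I]_L \subseteq I$, and translating through $\exp$ via BCH shows that conjugation in $G$ sends $\exp(I)\cap G$ into a set differing from itself only by elements of $[L,L]_L \cap (\text{stuff already in } I)$ — so in fact $N_0 = \tilde N \cap G$ should already be normal, or normal after one more bounded-index correction, because the group-theoretic commutator $[g,n] = g^{-1}n^{-1}gn$ corresponds under BCH to a sum of Lie brackets $[\log g, \log n]_L + (\text{higher brackets})$, all of which lie in $I$ since $I$ is an ideal. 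Thus the correct plan is: (1) show $I$ (or a bounded-index sub-ideal) is $\ast$-closed; (2) show that $\ast$-closed sub-ideals of $L$ are automatically normalized by $G$ under the BCH group law, because the relevant commutator corrections land in $I$; (3) intersect with $G$ and use the commensurability of the lattices $G$ and $L$ inside $L^\Q$ to get $[G : \exp(I)\cap G] \leq M_1 [L:I]$ with $M_1$ depending only on the fixed bounded index between $G$ and $L$. The delicate bookkeeping is step (1)–(2): verifying that all the BCH correction terms $q_i$ and all iterated commutator brackets genuinely land in $I$ when $I$ is only an ideal (not a subring), and tracking that the index inflation at each of the $\leq c$ correction stages is multiplicative by a constant rather than by a power of $[L:I]$.
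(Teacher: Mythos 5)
Your high-level intuition is sound, and you correctly reject the naive core-based argument (which would only give a polynomial bound $[L:I]^{f(h,c)}$). But there are two issues, one conceptual and one substantive, that keep this from being a proof.

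The conceptual issue is the entire "intersect with $G$" step. For an LR-group, $G$ and $L$ are the \emph{same underlying subset} of $\mathfrak{u}^0(n,\Q)$: $G$ is simultaneously a $\ast$-group and a Lie ring, and $L$ denotes that Lie ring structure. There is no commensurability or lattice-comparison argument to run; any additive subgroup of $L$ is already a subset of $G$. So your step (3) is vacuous, and phrases like $[L:L\cap G^{\ast\text{-closure}}]$ suggest a picture in which $G$ and $L$ are different lattices spanning $L^\Q$, which is not the situation here.

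The substantive gap is exactly the "delicate bookkeeping" you flag and then defer. You want a bounded-index sub-$\Z$-module $\tilde N\subset I$ that is $\ast$-closed and normalized by $G$, with $[L:\tilde N]\leq M_1'[L:I]$ for a constant $M_1'$ independent of $I$. The obstruction is not just that $I$ fails to be $\ast$-closed: the BCH correction terms $q_i(v,w)$ and the commutator corrections $\tilde q_i(v,w)$ have \emph{rational} coefficients, so even when $v\in I$ and $I$ is an ideal, $q_i(v,w)$ only lies in $\tfrac{1}{\Delta}I$, not in $I$. Simply passing to $\Delta^c I$ would give a bounded-index sub-ideal, but you would then have to re-check closure for that new ideal, and a naive iteration blows up. The paper resolves this with a \emph{graded} construction along the lower central series: it sets $I_G = \vct_\Z\{\Delta^{c-i}(\gamma_i(L^\Q)\cap I)\mid 1\leq i\leq c\}$, so that the deeper into the lower central series a vector lies, the smaller the $\Delta$-power applied to it. This grading is what makes the correction terms land \emph{inside} $I_G$ rather than just inside a dilated copy: a length-$\ell$ bracket involving a vector from $\Delta^{c-i}(\gamma_i\cap I)$ automatically lands in $\gamma_{i+1}\cap I$ (using that $I$ is an ideal) and picks up at least one extra factor of $\Delta$, which exactly cancels the $1/\Delta$ coefficient from BCH. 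This single set $I_G$ is then shown to be simultaneously an ideal and a normal $\ast$-subgroup contained in $I$, and since $\Delta^c I\subset I_G\subset I$, the index inflation is bounded by $\Delta^{c\dim_\Q L^\Q}$, uniformly in $I$. Without an explicit construction of this sort, your argument identifies the right obstacles but does not overcome them, so the linear bound is not established.
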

	\begin{rem}
		The normal subgroup $N$ provided by the proof of this proposition will be an LR-group.
	\end{rem}
	This result will be derived from the Baker-Campbell-Hausdorff formula.
	\begin{df}
		Let $X \subset L$ be a finite subset, then the basic Lie bracket $q$ of length $1$ are exactly the elements of $X$. Inductively, the basic Lie bracket $q$ of length $l \geq 2$ in $X$ are those of the form $q =  [q_1,q_2]_L$ where $q_1$ and $q_2$ are basic Lie brackets of length $l_1$ and $l_2$ with $l = l_1 + l_2$. 
	\end{df}
	\begin{nota}
		Let $G\leq (\mathfrak{u}^0(n, \Q), \ast)$ be an LR-group with underlying Lie ring structure $L$. Recall that the Baker-Campbell-Hausdorff formula dictates that
		\begin{equation} \label{eq_BCH_1}
			v\ast w = v + w + \dfrac{1}{2}[v,w]_L + \sum_{i=3}^n q_i(v,w)
		\end{equation}
		and
		\begin{equation} \label{eq_BCH_2}
			[v,w]_G = [v,w]_L + \sum_{i=3}^n \tilde{q}_i(v,w),
		\end{equation}
		where $q_i(v,w)$ and $\tilde{q}_i(v,w)$ are specific rational linear combinations of basic Lie brackets of length $i$.
	\end{nota}
	We start by defining the subalgebra on which we will be working:
	\begin{df} \label{df_IG}
		Let $L\leq \mathfrak{u}^0(n, \Q)$ be a Lie ring. Fix $\Delta \in \N$ such that the rational coefficients of the Baker-Campbell-Hausdorff formulae in Equations \eqref{eq_BCH_1}-\eqref{eq_BCH_2} lie in $(1/\Delta)\Z$. We define for every ideal $I \triangleleft L$ the set
		
		$$\mathcal{G}(L,I, \Delta) = \vct_\Z\{\Delta^{c-i+1}(\gamma_i(L^\Q)\cap I)\mid 1\leq i \leq c\}.$$
	\end{df}
	\begin{lemma} \label{lem_q_internal}
		Let $v, w \in L$, and let $q(v,w)$ be a basic Lie bracket of length at least $2$ in $\{v,w\}$. If $v\in \mathcal{G}(L,I, \Delta)$, then $q(v,w) \in \Delta \mathcal{G}(L,I, \Delta)$.
	\end{lemma}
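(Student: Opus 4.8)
The plan is to prove the lemma by expanding everything via the multilinearity of iterated Lie brackets and by tracking the powers of $\Delta$ that accumulate as one moves up the lower central series of $L^\Q$.

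First I would record two standard facts about $L^\Q$ and the ideal $I\lhd L$. If $x_1,\dots,x_\ell\in L^\Q$ satisfy $x_s\in\gamma_{d_s}(L^\Q)$ for each $s$, then any iterated Lie bracket of $x_1,\dots,x_\ell$ (with any bracketing) lies in $\gamma_{d_1+\dots+d_\ell}(L^\Q)$; this follows by induction on the bracket tree from $[\gamma_a(L^\Q),\gamma_b(L^\Q)]_L\subset\gamma_{a+b}(L^\Q)$. Moreover, if all the $x_s$ lie in $L$ and at least one of them lies in $I$, then the iterated bracket lies in $I$, since $I$ is an ideal of $L$ and $L$ is closed under the bracket. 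A trivial further observation: a basic Lie bracket of length $\ge 2$ no leaf of which carries the label $v$ equals $0$, because its bracket tree contains a node equal to $[w,w]_L=0$; so we may assume that $v$ occurs at least once among the $\ell\ge 2$ leaves of $q$, say $m\ge 1$ times.

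Next I would use the definition of $I_G$ to write $v=\sum_k a_k\,\Delta^{c-i_k}u_k$ with $a_k\in\Z$, $1\le i_k\le c$ and $u_k\in\gamma_{i_k}(L^\Q)\cap I$. Substituting this expression into each of the $m$ slots of $q$ occupied by $v$ and expanding by multilinearity, $q(v,w)$ becomes a $\Z$-linear combination of expressions of the form $\Delta^{\sum_{j=1}^m(c-i_{k_j})}\,Q$, where $Q$ is the iterated bracket obtained from $q$ by placing $u_{k_j}$ in the $j$-th $v$-slot and leaving the $w$'s unchanged. By the two facts above, either $Q=0$, or $Q\in\gamma_J(L^\Q)\cap I$ with $J=\sum_{j=1}^m i_{k_j}+(\ell-m)$ and $2\le J\le c$; in the latter case $\Delta^{c-J}Q\in I_G$ by the very definition of $I_G$, so the expression equals $\Delta^{E}\cdot(\Delta^{c-J}Q)$ with $E=\sum_{j=1}^m(c-i_{k_j})-(c-J)=c(m-1)+(\ell-m)$.

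It then only remains to check that $E\ge 1$, which is exactly where the hypothesis $\ell\ge 2$ enters: if $m=1$ then $E=\ell-1\ge 1$, and if $m\ge 2$ then $E\ge c\ge 1$. Hence each expression, and therefore the whole $\Z$-linear combination $q(v,w)$, lies in $\Delta I_G$. The one point requiring care is the bookkeeping in the substitution step, where $v$ may be replaced in different slots by homogeneous pieces $u_{k_j}$ of different weights $i_{k_j}$; once these weights are summed correctly the identity for $E$ is forced, and the bound $E\ge 1$ is then immediate from $\ell\ge 2$ and $m\ge 1$.
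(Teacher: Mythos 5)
Your proof is correct and follows essentially the same strategy as the paper: decompose $v$ into the generators $\Delta^{c-i}w$ of $I_G$, expand $q$ by multilinearity into basic brackets, then compare the $\Delta$-powers pulled out of the $v$-slots against the power $\Delta^{c-J}$ needed to place the resulting bracket (of weight $J$) in $I_G$. The only difference is that you extract $\Delta$-factors from \emph{every} substituted $v$-slot and compute the weight $J$ exactly, whereas the paper fixes a single generator $v_1$, pulls out only $\Delta^{l(c-i)}$ from its $l$ occurrences, and uses the cruder bound that the remaining bracket lies in $\gamma_{i+1}(L^\Q)\cap I$; both estimates yield the required surplus exponent $E\ge 1$.
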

	\begin{proof}
		Take a general element of $\mathcal{G}(L,I, \Delta)$, which is given by a $\Z$-linear combination $\lambda_1v_1 + \ldots + \lambda_{k-1}v_{k-1}$, where every $v_{j}$ is of the form $\Delta^{c-i+1}w_j$ with $w_j \in \gamma_i(L^\Q)\cap I$ for some $i$, and an arbitrary element $v_k \in L$. By linearity of the Lie bracket,
		$$ q(\lambda_1v_1 + \ldots + \lambda_{k-1}v_{k-1}, v_k)$$
		can be rewritten as a $\Z$-linear combination of basic Lie brackets of length at least $2$ in $\{v_1, \ldots , v_k\}$. It suffices to show that all these basic brackets lie in $\Delta \mathcal{G}(L,I, \Delta)$ as $\mathcal{G}(L,I, \Delta)$ is additively closed by construction.
		
		Let $\tilde{q}(v_1, \ldots , v_k)$ be any basic Lie bracket of length at least $2$ as above. Without loss of generality we can assume that $v_1$ appears in it. Write $v_1 = \Delta^{c-i+1}w_1$ with $w_1 \in \gamma_i(L^\Q)\cap I$ for some $i$. Since $I$ is an ideal in $L$ and the Lie bracket $\tilde{q}$ is basic, we have that 
		$$\tilde{q}(w_1, v_2, \ldots, v_k) \in \gamma_{i+1}(L^\Q)\cap I.$$
		Using linearity, 
		\begin{equation*}
			\begin{split}
				\tilde{q}(\Delta^{c-i+1}w_1, v_2, \ldots, v_k) & = \Delta^{l(c-i+1)}\tilde{q}(w_1, v_2, \ldots, v_k) \\ 
				& = \Delta (\Delta^{l(c-i+1)-1}\tilde{q}(w_1, v_2, \ldots, v_k))
			\end{split}
		\end{equation*}
		for some $l\in\N$. As $l(c-i+1)-1 \geq c-i$, we conclude that $\tilde{q}(v_1, \ldots , v_k) \in \Delta \mathcal{G}(L,I, \Delta)$.
	\end{proof}
	\begin{cor} \label{cor_IG_is_normal}
		The set $\mathcal{G}(L,I, \Delta)$ is an ideal of $L$ included in $I$. Furthermore, if $H \leq \Aut(L^\Q)$ and $L$ and $I$ are $H$-invariant, then $\mathcal{G}(L,I, \Delta)$ is $H$-invariant.
	\end{cor}
	\begin{proof}
		The set $\mathcal{G}(L,I, \Delta)$ is defined as a $\Z$-linear span, so it is surely a $\Z$-module. We see that $\mathcal{G}(L,I, \Delta)$ is an ideal of $L$, since $[\mathcal{G}(L,I, \Delta), L]_L \subset \mathcal{G}(L,I, \Delta)$ by Lemma \ref{lem_q_internal}. The inclusion $\mathcal{G}(L,I, \Delta) \subset I$ follows directly from the definition of $\mathcal{G}(L,I, \Delta)$.
		
		Let $\xi \in H$. We know that $\xi(L) = L$ and $\xi(I) = I$. Also, $\xi(\gamma_i(L^\Q)) = \gamma_i(L^\Q)$. We see that
		$\xi(\Delta^{c-i+1}I\cap \gamma_i(L^\Q)) = \Delta^{c+1-i}I\cap \gamma_i(L^\Q)$ and thus $\xi(\mathcal{G}(L,I, \Delta)) = \mathcal{G}(L,I, \Delta)$ by linearity.
	\end{proof}
	\begin{lemma}\label{lem_GLI_normal}
		The ideal $\mathcal{G}(L,I, \Delta)$ defines an LR-group. Furthermore, if $L$ is an LR-group itself, then $\mathcal{G}(L,I, \Delta)$ is normal in $L$.
	\end{lemma}
	\begin{proof}
		Let $q_i$ and $\tilde{q}_i$ be defined as in Equations \eqref{eq_BCH_1} and \eqref{eq_BCH_2}. The $\Q$-linear combinations in $q_i$ and $\tilde{q}_i$ have coefficients in $(1/\Delta)\Z$.
		By Lemma \ref{lem_q_internal}, we now see that
		$$q_i(\mathcal{G}(L,I, \Delta), \mathcal{G}(L,I, \Delta)) \in \mathcal{G}(L,I, \Delta) \text{ and } \tilde{q}_i(\mathcal{G}(L,I, \Delta), L) \in \mathcal{G}(L,I, \Delta).$$
		Thus, $\mathcal{G}(L,I, \Delta)\ast \mathcal{G}(L,I, \Delta) \subset \mathcal{G}(L,I, \Delta)$ and $[\mathcal{G}(L,I, \Delta), L]_G \subset \mathcal{G}(L,I, \Delta)$ by Equations \eqref{eq_BCH_1}-\eqref{eq_BCH_2}. This shows that $\mathcal{G}(L,I, \Delta)$ is an LR-group. If $L$ is also a group, then the latter implies that $\mathcal{G}(L,I, \Delta)$ is normal in it.
	\end{proof}
	\begin{lemma}
		Let $G\leq (\mathfrak{u}^0(n, \Q), \ast)$ be an LR-group with underlying Lie ring structure $L$. If $I$ is an ideal of $L$, then the index of $\mathcal{G}(L,I, \Delta)$ as an ideal of $L$ and as a subgroup of $G$ agree. In particular, $[G:\mathcal{G}(L,I, \Delta)] \leq \Delta^{c\dim_\Q L^\Q}[L:I]$.
	\end{lemma}
	\begin{proof}
		Let $v\in L$. We claim that $v+ \mathcal{G}(L,I, \Delta) = v\ast \mathcal{G}(L,I, \Delta)$. First, we will demonstrate that $v\ast \mathcal{G}(L,I, \Delta) \subset v + \mathcal{G}(L,I, \Delta)$. Take $w\in \mathcal{G}(L,I, \Delta)$. Now,
		$$v\ast w =  v + w + \sum_{i=2}^n q_i(v,w).$$
		By Lemma \ref{lem_q_internal}, we know that $q_i(L,\mathcal{G}(L,I, \Delta)) \subset \mathcal{G}(L,I, \Delta)$, so $w + \sum_{i=2}^n q_i(v,w) \in \mathcal{G}(L,I, \Delta)$. Hence, $v\ast w \in v + \mathcal{G}(L,I, \Delta)$. This shows the first inclusion.
		
		For the other inclusion, we will show that for every $w\in \mathcal{G}(L,I, \Delta)$ and every $v\in L$ there exists $w'\in \mathcal{G}(L,I, \Delta)$ such that $v+w = v\ast w'$. We proceed by induction on $i$ where $w\in \mathcal{G}(L,I, \Delta)\cap \gamma_i(L^\Q)$. If $i = c$, then 
		$$v + w =  v\ast w - \sum_{i=2}^n q_i(v,w) = v\ast w,$$
		since $q_i(v,w) = 0$ by the nilpotency class of $L^\Q$. Hence, the claim holds for $i = c$ with $w' = w$, for every choice of $v\in L$. Now suppose the claim holds for all $w\in \mathcal{G}(L,I, \Delta)\cap \gamma_i(L^\Q)$. Take $w_2\in \mathcal{G}(L,I, \Delta)\cap \gamma_{i-1}(L^\Q)$. Now,
		$$v + w_2 =  v\ast w_2 - \sum_{i=2}^n q_i(v,w_2),$$
		where $w_3 = -\sum_{i=2}^n q_i(v,w_2)$ lies in $\mathcal{G}(L,I, \Delta)\cap \gamma_i(L^\Q)$. Now, $(v\ast w_2) + w_3 = (v\ast w_2) \ast w_4$ for some $w_4\in \mathcal{G}(L,I, \Delta)$ by the induction hypothesis. We conclude that $v+w_2 = v\ast w_2'$ with $w_2' = w_2\ast w_4 \in \mathcal{G}(L,I, \Delta)$. By the principal of induction, we conclude that $v+\mathcal{G}(L,I, \Delta) \subset v \ast \mathcal{G}(L,I, \Delta)$.
		
		The fact that the cosets in the Lie ring setting and in the group setting agree, $v+ \mathcal{G}(L,I, \Delta) = v \ast \mathcal{G}(L,I, \Delta)$, implies that $[L:\mathcal{G}(L,I, \Delta)] = [G:\mathcal{G}(L,I, \Delta)]$. By construction, we also have $\Delta^c I \leq \mathcal{G}(L,I, \Delta) \leq I$, and thus $[L: \mathcal{G}(L,I, \Delta)] \leq \Delta^{c\dim_\Q L^\Q}[L:I]$.
	\end{proof}
	This lemma finishes the proof of Proposition \ref{prop_ideal_to_normal} by setting $N = \mathcal{G}(L, I, \Delta)$.
	\subsection{From Normal Subgroup to Ideal} \label{sec_normal_to_ideal}
	In this subsection, we will prove the analog of Proposition \ref{prop_ideal_to_normal}, given below in Proposition \ref{prop_normal_to_ideal}. This result will be used in the proofs of Theorems \ref{thm_from_I_to_LR} and \ref{thm_intro_RFG_RFL} below.
	\begin{prop} \label{prop_normal_to_ideal}
		Let $G\leq (\mathfrak{u}^0(n, \Q), \ast)$ be an LR-group with underlying Lie ring structure $L$. There exist constants $M_2 >0$ such that if $N \lhd_f G$, then there exists an ideal $I \subset N$ of $L$ such that $[L:I] \leq M_2[G:N]$. Furthermore, if $H \leq \Aut(G^\Q)$ and $G$ and $N$ are $H$-invariant, then $I$ can be chosen $H$-invariant.
	\end{prop}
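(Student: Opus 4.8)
The plan is to run the construction of Subsection~\ref{ssec_ideals} in reverse: from the normal subgroup $N$ I want to extract an ideal $N_L$ of $L$ — morally the largest ideal of $L$ contained in $N$ — with $N_L\subseteq N$ and $[L:N_L]$ bounded by a fixed constant times $[G:N]$, and then take $I=N_L$. Where the passage $I\mapsto I_G$ had to \emph{enlarge} an ideal along the rational lower central series $\gamma_\bullet(L^\Q)$ — multiplying the $\gamma_i$-part by $\Delta^{c-i}$ — in order to absorb the denominators of the Baker--Campbell--Hausdorff correction terms and become $\ast$-closed, here we must instead \emph{shrink} the $\ast$-closed set $N$, again along $\gamma_\bullet(L^\Q)$ and with suitable powers of $\Delta$, so that it becomes closed under $+$ and under $[\cdot,\cdot]_L$. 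The correction terms $q_i(v,w)$ and $\tilde q_i(v,w)$ from \eqref{eq_BCH_1}--\eqref{eq_BCH_2} are again the obstruction, only now they are subtracted rather than added.

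Concretely I would argue by induction on the nilpotency class $c$ of $L$. For $c=1$ we have $G=L$ with $\ast=+$, so $N$ is already an ideal. For $c\ge 2$, set $Z:=\gamma_c(L^\Q)\cap L$; this is a central ideal of $L$ and a central normal subgroup of $G$ on which $\ast$ and $+$ coincide, so (after re-embedding into some $\mathfrak{u}^0(n',\Q)$) $\bar G:=G/Z$ is an LR-group with underlying Lie ring $\bar L:=L/Z$ of class $\le c-1$. The induction hypothesis applied to $\bar N:=NZ/Z\lhd_f\bar G$ gives an ideal $\bar I\subseteq\bar N$ of $\bar L$ whose index is controlled by $[\bar G:\bar N]\le[G:N]$; its preimage $I^\ast:=\pi^{-1}(\bar I)$ under $\pi:L\to\bar L$ is an ideal of $L$ containing $Z$, with the same controlled index, and because $Z$ is central one checks $I^\ast\subseteq N+Z$ as subsets of $L$ (indeed $n\ast z=n+z$ for $z\in Z$). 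Recording also the ideal $I_c:=N\cap Z$, for which $[Z:I_c]\le[G:N]$ and $N+I_c=N$, one then corrects $I^\ast$ to an ideal $N_L\subseteq N$; the needed closure properties of $N_L$ are verified by the same kind of bookkeeping as in Lemma~\ref{lem_q_internal}, using \eqref{eq_BCH_1}--\eqref{eq_BCH_2}, the normality of $N$ in $G$, and the fact that all relevant correction terms lie in $\gamma_2(L^\Q)$.

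The crux — and the step I expect to cost the most — is the \emph{uniform} index bound $[L:N_L]\le M_2[G:N]$ with $M_2$ depending only on $n$, $c$, $\Delta$ and $\dim_\Q L^\Q$. A literal application of the induction would only bound $[I^\ast:N_L]$ by $[Z:I_c]\le[G:N]$, hence give something quadratic in $[G:N]$ at this level and, after iterating over all $c$ levels, only polynomial — which is not good enough. To get a linear bound one must instead treat the whole filtration $L=\gamma_1(L^\Q)\cap L\supseteq\gamma_2(L^\Q)\cap L\supseteq\cdots$ simultaneously and arrange the corrections so that the individual index losses telescope into a single fixed constant, in direct analogy with the uniform estimate $\Delta^cI\subseteq I_G\subseteq I$ of Subsection~\ref{ssec_ideals}; concretely one wants to produce a $\ast$-subgroup $N'$ with $[G:N']$ comparable to $[G:N]$ such that $\vct_\Z\{\Delta^{c-i}(\gamma_i(L^\Q)\cap N')\mid 1\le i\le c\}\subseteq N_L$. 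Once $N_L$ is constructed with this property, the proposition follows with $I=N_L$ and $M_2$ as above.
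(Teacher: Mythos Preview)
Your proposal is a plan rather than a proof, and the place where it stops is exactly the place where the content is. You correctly diagnose that the inductive scheme on $c$ only yields $[L:N_L]\le M_2[G:N]^c$, and you then say one ``must instead treat the whole filtration simultaneously'' and ``one wants to produce a $\ast$-subgroup $N'$\ldots'' --- but you never produce it. That is the whole proposition. A second, related gap is the toolkit: you keep invoking \eqref{eq_BCH_1}--\eqref{eq_BCH_2}, which express the group operations $\ast$ and $[\cdot,\cdot]_G$ in terms of the Lie operations. To show that a $\ast$-closed set is closed under $+$ and $[\cdot,\cdot]_L$ you need the opposite direction: formulas expressing $g+h$ and $[g,h]_L$ as finite products of group commutators $\kappa_j(g,h)^{r_j}$ with $r_j\in\Q$ (the ``inverse BCH'' formulas, cf.\ \cite[Lemma~10.7]{Khukhro}). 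Without those, Lemma~\ref{lem_q_internal}-style bookkeeping cannot even be set up on this side.

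The paper bypasses the induction entirely. Fixing $\Lambda\in\N$ clearing the denominators of the inverse BCH formulas, it defines $N_L$ in one shot as the subgroup generated by the powers $g^{f(c-i)}$ with $g\in\Gamma_i(G^\Q)\cap N$, where $f(0)=1$ and $f(i{+}1)=(f(i)\Lambda^c)^c$. The analogue of Lemma~\ref{lem_q_internal} is then proved for basic \emph{group} commutators (using normality of $N$ and the fact that $H^{s^c}\subset\{h^s:h\in H\}$ in nilpotent groups, \cite[Chapter~6, Proposition~2]{sega83-1}), and the inverse BCH formulas immediately give that $N_L$ is an ideal. Since $N^{f(c-1)}\le N_L\le N$, the index bound is linear with $M_2=f(c-1)^{\dim_\Q L^\Q}$, which is precisely the ``telescoping into a single fixed constant'' you were hoping for. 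Your idea of shrinking along the central filtration is morally right; what is missing is replacing the scalar factors $\Delta^{c-i}$ by group powers $g^{f(c-i)}$ and switching to the inverse BCH identities.
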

	The proof of this result is essentially analogous to the proof of Proposition \ref{prop_ideal_to_normal}. However, some arguments are more involved, since the group commutator is not `linear' as is the case for the Lie bracket.
	\begin{df}
		Let $X \subset G$ be a finite subset, then the basic commutators $\kappa$ of length $1$ in $X$ are exactly the elements of $X$. Inductively, the basic commutators $\kappa$ of length $l$ in $X$ are of the form $\kappa = [\kappa_1, \kappa_2]_G$ where $\kappa_1$ and $\kappa_2$ are basic commutators of length $l_1$ and $l_2$ with $l = l_1 + l_2$. 
	\end{df}
	\begin{nota}
		Let $G\leq (\mathfrak{u}^0(n, \Q), \ast)$ be an LR-group with underlying Lie ring structure $L$. The inverse Baker-Campbell-Hausdorff formulas for nilpotent groups (see \cite[Lemma 10.7]{Khukhro}) dictate that there exist numbers $N_1, N_2 \in \N$ such that
		\begin{equation} \label{eq_BCH_inv_1}
			g + h = g\ast h \ast \prod_{j=1}^{N_1} (\kappa_j(g,h))^{r_j}
		\end{equation}
		and
		\begin{equation} \label{eq_BCH_inv_2}
			[g,h]_L = [g,h]_G\ast \prod_{j=1}^{N_2} (\tilde{\kappa}_j(g,h))^{s_j},
		\end{equation}
		where $\kappa_j(g,h)$ and $\tilde{\kappa}_j(g,h)$ are basic group commutators in $\{g,h\}$ of length at least $2$ and $3$ respectively and $r_j, s_j \in \Q$. Fix $\Lambda \in \N$ such that all $r_j, s_j \in (1/\Lambda)\Z$.
	\end{nota}
	Note that these products are finite, because there are only finitely many non-trivial basic commutators in $\{g,h\}$ as $\langle g,h\rangle$ is nilpotent. We may assume that every commutator contains both $g$ and $h$, otherwise it would surely be trivial.
	\begin{df} \label{df_LGN}
		Let $G\leq (\mathfrak{u}^0(n, \Q), \ast)$ be an LR-group of nilpotency class $c$ with underlying Lie ring structure $L$. Let $N$ be a normal subgroup of $G$. We define
		$$\mathcal{L}(G,N, \Lambda) = \langle g^{f(c-i)} \mid g\in \Gamma_i(G^\Q)\cap N \text{ with }1\leq i \leq c\rangle,$$
		where $f: \N\cup\{0\} \to \N$ is defined inductively via $f(0) = \Lambda$ and $f(i+1) = \left(f(i)\Lambda^{c}\right)^c$.
	\end{df}
	In the following results, we will argue that $\mathcal{L}(G,N, \Lambda)$ can be used as the ideal $I$ in Proposition \ref{prop_normal_to_ideal}, proving the result. In fact, $\mathcal{L}(G,N, \Lambda)$ is an LR-group.
	\begin{lemma} \label{lem_kappa_internal}
		Let $g, h \in G$, and let $\kappa(g,h)$ be a basic commutator of length at least $2$ in $\{g,h\}$. If $g\in \mathcal{L}(G,N, \Lambda)$, then $\kappa(g,h) \in \{\tilde{g}^\Lambda\mid \tilde{g}\in \mathcal{L}(G,N, \Lambda)\}$.
	\end{lemma}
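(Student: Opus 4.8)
The plan is to mirror the proof of Lemma \ref{lem_q_internal}, but replacing the linear algebra of the Lie bracket with a careful inductive bookkeeping of the powers $f(c-i)$ that occur in the definition of $N_L$. The key structural fact is that $N_L$ is generated by elements $g^{f(c-i)}$ with $g \in \Gamma_i(G^\Q) \cap N$, and that the exponent function $f$ was chosen precisely so that $f(i+1) = (f(i)\Lambda^c)^c$; the $\Lambda^c$ factor will absorb the denominators coming from the inverse BCH products \eqref{eq_BCH_inv_1}--\eqref{eq_BCH_inv_2}, and the outer $c$-th power will make room for the fact that a basic commutator of length $l \geq 2$ built from a generator in $\Gamma_i(G^\Q)$ lands in $\Gamma_{i+1}(G^\Q)$ (indeed in $\Gamma_{i+l-1}$, but $l-1 \geq 1$ suffices). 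So the first step is to reduce to the case $g = \tilde{g}^{f(c-i)}$ with $\tilde g \in \Gamma_i(G^\Q)\cap N$: a general element of $N_L$ is a product of such generators and their inverses, and using collection identities (every commutator is a product of basic commutators of the subwords, each of which contains $g$) one reduces $\kappa(g,h)$ to a product of basic commutators each involving a single generator $\tilde g^{f(c-i)}$ of $N_L$.

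Next I would treat a single basic commutator $\kappa(\tilde g^{f(c-i)}, h, \dots)$ of length $l \geq 2$ in which the generator $\tilde g^{f(c-i)}$ appears. The point is to move the exponent $f(c-i)$ outside. Since the group is nilpotent, one can use the standard identity that a commutator $[\,a^k, b\,]$ equals $[a,b]^k$ modulo commutators of higher weight, with the correction terms again being basic commutators, now of strictly larger length, and with exponents that are $\Z$-linear combinations of $k$ and binomial coefficients $\binom{k}{2},\dots$. Iterating this weight-by-weight (there are at most $c$ weights), one shows $\kappa(\tilde g^{f(c-i)}, h, \dots)$ is a product of basic commutators of the form $\kappa'(\tilde g, h, \dots)^{m}$ where $\kappa'$ has length $\geq 2$, hence $\kappa'(\tilde g, h, \dots) \in \Gamma_{i+1}(G^\Q) \cap N$ (using that $N$ is normal and $\tilde g \in N$), and where the exponent $m$ is divisible by $f(c-i)$ up to bounded combinatorial factors bounded by powers of $\Lambda$ — here is where the estimate $l(c-i-1) \ge c-i-1$ from Lemma \ref{lem_q_internal} gets replaced by the recursion $f(c-i) \mid f(c-i-1)/\Lambda^{c}$, i.e. $f(c-(i+1)) = (f(c-i)\Lambda^c)^c$ is a multiple of $\Lambda \cdot f(c-i)$, so $\kappa'(\tilde g,h,\dots)^{m}$ can be written as $(\text{generator of } N_L)^{\Lambda \cdot (\text{integer})}$, i.e. lies in $\{\tilde x^\Lambda \mid \tilde x \in N_L\}$.

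The main obstacle — and the reason $f$ is defined by such a rapidly growing recursion rather than by the simple shift $\Delta^{c-i}$ of the linear case — is that the group commutator identities introduce two separate sources of arithmetic loss that the Lie-bracket argument does not have: first, expanding $[a^k,b]$ and collecting into basic commutators produces binomial-coefficient exponents and, through the inverse BCH formulas, rational coefficients in $(1/\Lambda)\Z$, so one must clear denominators of size up to $\Lambda^{c}$ at each of the $c$ nested levels; second, the exponent of a length-$l$ basic commutator in a $k$-th power scales only like $k$ (not $k^l$), so one cannot rely on the exponent itself to outpace the required divisibility and must instead build the slack into $f$ by hand. Verifying that $f(i+1) = (f(i)\Lambda^c)^c$ is exactly large enough to survive both effects simultaneously — for every length $l$ between $2$ and $c$ and every weight shift — is the delicate computation, and it should be organized as a downward induction on $i$ (from $i = c$, where all correction terms vanish by nilpotency, down to $i = 1$), exactly as in the proof of the last lemma of Subsection \ref{ssec_ideals}.
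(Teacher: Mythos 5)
Your reduction to a single generator --- passing to $H = \langle x_1, \dots, x_k\rangle$ with $x_j = g_j^{f(c-i)}$ and using Hall collection to write $\kappa(x_1 \cdots x_{k-1}, x_k)$ as a product of basic commutators each involving one $x_j$ with $j < k$ --- matches the paper. But after that, your mechanism for extracting the exponent diverges from the paper's and, as stated, does not work.

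The paper never attempts to ``move the exponent $f(c-i)$ outside'' by iterating identities of the type $[a^k, b] \equiv [a,b]^k \pmod{\text{higher weight}}$. Instead it keeps $x_j = g_j^{f(c-i)}$ intact as a single letter: since $N$ is normal, so is $N^{f(c-i)}$, and any basic commutator containing $x_j$ automatically lies in the \emph{subgroup} $N^{f(c-i)}$. The exponent is then recovered via the result from \cite[Chapter 6, Proposition 2]{sega83-1}: in a nilpotent group $H$ of class $c$, one has $H^{s^c} \subset \{h^s \mid h \in H\}$. This converts membership in the subgroup $N^{(f(c-i-1)\Lambda^c)^c}$ into the set-theoretic statement that the element is a single $\bigl(f(c-i-1)\Lambda^c\bigr)$-th power, and it is precisely this result that the outer $(\cdot)^c$ in the recursion $f(i+1) = (f(i)\Lambda^c)^c$ is designed to feed. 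The same proposition is used once more at the end to turn the product $\prod_j \kappa_j^H \in N_L^{\Lambda^c}$ into a single $\Lambda$-th power of an element of $N_L$. Your proposal does not invoke this proposition at all, and that is the missing idea.

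Two concrete consequences. First, your claim that the exponent $m$ of $\kappa'(\tilde g, h, \dots)^m$ ``is divisible by $f(c-i)$ up to bounded combinatorial factors bounded by powers of $\Lambda$'' is not justified. The correction exponents coming from collecting are $\Z$-linear combinations of $\binom{k}{2}, \binom{k}{3}, \dots$ with $k = f(c-i)$, and $\binom{k}{j}$ is only guaranteed divisible by $k/\gcd(k, j!)$; the denominators $j!$ are controlled by the nilpotency class $c$, not by $\Lambda$, which is defined entirely by the rational coefficients in the inverse Baker--Campbell--Hausdorff products \eqref{eq_BCH_inv_1}--\eqref{eq_BCH_inv_2}. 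So the slack you attribute to $\Lambda^c$ is really doing a different job (clearing BCH denominators), and your accounting would need $f$ to absorb factorial factors that your argument never identifies. Second, even if each basic factor $\kappa_j^H$ were a $\Lambda$-th power, a \emph{product} of $\Lambda$-th powers is not itself a $\Lambda$-th power in a nonabelian group; you need exactly the Segal-type inclusion $N_L^{\Lambda^c} \subset \{\tilde g^\Lambda \mid \tilde g \in N_L\}$ to close the argument, and your write-up stops before this step.
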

	\begin{proof}
		In general, every element $\kappa(g,h)$ with $g\in \mathcal{L}(G,N, \Lambda)$ and $h\in G$ can be written as $\kappa(x_1\cdots x_{k-1}, x_k)$ where every $x_{j}$ for $j \in \{1, \ldots, k-1\}$ is of the form $g_j^{f(c-i)}$ with $g_j \in\Gamma_{i}(G^\Q)\cap N \subset G$ and with $x_k\in G$. 
		We must argue that for any $k \geq 2$ this element lies in $\{\tilde{g}^\Lambda\mid \tilde{g}\in \mathcal{L}(G,N, \Lambda)\}$.
		
		Consider $H = \langle x_1, \ldots , x_k\rangle$, then the element $\kappa(x_1x_2\cdots x_{k-1},x_k)$ lies in $\Gamma_2(H)$. By \cite[Lemma 2.2.3]{robinson} and the fact that $H$ is nilpotent, $\Gamma_2(H)$ is generated by basic commutators of length at least two in $\{x_1, \ldots, x_k\}$. Thus, there is a finite number of basic commutators or their inverses $\kappa^H_j(x_1, x_2, \ldots , x_k)$ with $1\leq j \leq N_3$ for some $N_3 \in \N$ such that
		\begin{equation} \label{eq_free_nilp_kappa}
			\kappa(x_1x_2\cdots x_{k-1},x_k) = \prod_{j=1}^{N_3} \kappa_j^{H}(x_1,x_2, \ldots , x_k).
		\end{equation}
		Also, we may assume that at least one generator in $\{x_1, \ldots , x_{k-1}\}$ appears in each $\kappa_j^{H}$, otherwise $\kappa_j^{H}$ would be trivial. Without loss of generality, suppose $x_1$ appears in $\kappa_j^{H}$, and $x_1 = g_1^{f(c-i)}$ with $g_1 \in \Gamma_i(G^\Q)\cap N$. Since $N$ is normal in $G$, so is $N^{f(c-i)}$. Therefore,
		$$g_2 := \kappa_j^{H}(g_1^{f(c-i)}, x_2, \ldots , x_k) \in N^{f(c-i)} = N^{(f(c-i-1)\Lambda^c)^c}.$$
		
		Now \cite[Chapter 6, Proposition 2]{sega} states that for any nilpotent group $H$ of nilpotency class $c$ and any $s\in \N$, $H^{s^c} \subset \{h^s\mid h\in H\}$, so
		$$g_2 \in \{g^{f(c-i-1)\Lambda^c} \mid g\in N\}.$$
		Write $g_2 = g_3^{f(c-i-1)\Lambda^c}$ with $g_3\in N$. Since $g_1 \in \Gamma_i(G^\Q)$ and $\kappa_j^{H}$ is a basic commutator containing $g_2$, it is clear that $g_2$ and thus also $g_3$ lie in $\Gamma_{i+1}(G^\Q)$. Therefore, $ g_3^{f(c-i-1)} \in \mathcal{L}(G,N, \Lambda)$ and thus $g_2$ is of the form $\tilde{g}^{\Lambda^c}$ with $\tilde{g} \in \mathcal{L}(G,N, \Lambda)$. By Equation \eqref{eq_free_nilp_kappa}, we now conclude that
		$\kappa(x_1x_2\cdots x_{k-1},x_k)$ lies in $\mathcal{L}(G,N, \Lambda)^{\Lambda^c} \subset \{\tilde{g}^\Lambda \mid \tilde{g} \in \mathcal{L}(G,N, \Lambda)\}$, using \cite[Chapter 6, Proposition 2]{sega} again for the last inclusion.
	\end{proof}
	\begin{lemma}
		Let $G\leq (\mathfrak{u}^0(n, \Q), \ast)$ be an $\mathcal{I}$-group. If $N$ is a normal subgroup of $G$, then $\mathcal{L}(G,N, \Lambda)$ is as well.
		Furthermore, if $H \leq \Aut(G^\Q)$ and $G$ and $N$ are $H$-invariant, then $\mathcal{L}(G,N, \Lambda)$ is also $H$-invariant.
	\end{lemma}
	\begin{proof}
		By definition, $\mathcal{L}(G,N, \Lambda)$ is a subgroup. In order to show that it is normal in $G$, it suffices to demonstrate that $[\mathcal{L}(G,N, \Lambda),G]_G \subset \mathcal{L}(G,N, \Lambda)$, which follows directly from Lemma \ref{lem_kappa_internal} with $\kappa(g,h) = [g,h]_G$.
		
		Let $\xi \in H$. Since $\xi(N)= N$ and $\xi(\Gamma_i(G^\Q)) = \Gamma_i(G^\Q)$, we have $\xi(\Gamma_i(G^\Q)\cap N) = \Gamma_i(G^\Q)\cap N$. Thus, for every generator $g^{f(c-i)}$ with $g \in \Gamma_i(G^\Q)\cap N$ of $\mathcal{L}(G,N, \Lambda)$, we have
		$\xi(g^{f(c-i)}) = (\xi(g))^{f(c-i)} \in \mathcal{L}(G,N, \Lambda)$,
		so surely $\xi(\mathcal{L}(G,N, \Lambda)) = \mathcal{L}(G,N, \Lambda)$.
	\end{proof}
	\begin{lemma}
		Let $G\leq (\mathfrak{u}^0(n, \Q), \ast)$ be an $\mathcal{I}$-group with normal subgroup $N$. Then, $\mathcal{L}(G,N, \Lambda)$ is a LR-group. Furthermore, if $G$ is an LR-group with underlying Lie ring structure $L$, then $\mathcal{L}(G,N, \Lambda)$ is an ideal of $L$.
	\end{lemma}
	\begin{proof}
		By Lemma \ref{lem_kappa_internal} and the choice of $\Lambda$, namely such that $r_j, s_j \in (1/\Lambda)\Z$ in Equations \eqref{eq_BCH_inv_1}-\eqref{eq_BCH_inv_2}, we immediately see that
		$$\kappa_j(\mathcal{L}(G,N, \Lambda), G)^{r_j} \in \mathcal{L}(G,N, \Lambda) \text{ and } \tilde{\kappa}_j(\mathcal{L}(G,N, \Lambda), G)^{s_j} \in \mathcal{L}(G,N, \Lambda).$$
		Using this in Equation \eqref{eq_BCH_inv_1} shows that $\mathcal{L}(G,N, \Lambda)$ is additively closed. Equation \eqref{eq_BCH_inv_2} then shows that $[\mathcal{L}(G,N, \Lambda), G]_L \subset \mathcal{L}(G,N, \Lambda)$. In particular, $[\mathcal{L}(G,N, \Lambda), \mathcal{L}(G,N, \Lambda)]_L \subset \mathcal{L}(G,N, \Lambda)$, so $\mathcal{L}(G,N, \Lambda)$ is an LR-group. If $G$ is an LR-group, then this shows that $\mathcal{L}(G,N, \Lambda)$ is an ideal of its underlying Lie ring structure $L$.
	\end{proof}
	\begin{lemma}
		Let $G\leq (\mathfrak{u}^0(n, \Q), \ast)$ be an LR-group with underlying Lie ring structure $L$. If $N$ is a normal subgroup of $G$, then the index of $\mathcal{L}(G,N, \Lambda)$ as a normal subgroup of $G$ and as an ideal of $L$ agree. In particular, there exists a constant $M_2>0$ such that $[L:\mathcal{L}(G,N, \Lambda)] \leq M_2[G:N]$.
	\end{lemma}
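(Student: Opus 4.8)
The plan is to follow the template of the final lemma of Section~\ref{ssec_ideals}. First I would show that the Lie-ring cosets and the group cosets of $N_L$ coincide, i.e.\ that $g + N_L = g\ast N_L$ for every $g\in L$, where we identify $L$ and $G$ on the level of underlying sets. Granting this, the partition of $L=G$ into additive cosets of $N_L$ and the partition into $\ast$-cosets of $N_L$ are literally the same, so $N_L$ has the same number of cosets in both structures; in particular $[L:N_L]=[G:N_L]$, and both are finite whenever $N\lhd_f G$, which we verify below. This is exactly the first assertion of the lemma.

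For the coset identity I would prove both inclusions, replacing the forward Baker-Campbell-Hausdorff data ($q_i$, $\tilde q_i$, $\Delta$, Lemma~\ref{lem_q_internal}) used in Section~\ref{ssec_ideals} by the inverse data ($\kappa_j$, $\tilde\kappa_j$, $\Lambda$, Lemma~\ref{lem_kappa_internal}). The inclusion $g + N_L\subseteq g\ast N_L$ is immediate from \eqref{eq_BCH_inv_1}: for $w\in N_L$ one has $g+w = g\ast w\ast\prod_{j=1}^{N_1}\kappa_j(g,w)^{r_j}$, each $\kappa_j(g,w)$ is a basic commutator of length $\geq 2$ containing $w\in N_L$, so Lemma~\ref{lem_kappa_internal} (whose proof is insensitive to which of the two arguments lies in $N_L$) exhibits it as a $\Lambda$-th power of an element of $N_L$, and since $r_j\in(1/\Lambda)\Z$ the whole product $Q(g,w):=\prod_j\kappa_j(g,w)^{r_j}$ lies in $N_L$; hence $g+w = g\ast(w\ast Q(g,w))\in g\ast N_L$. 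The reverse inclusion $g\ast N_L\subseteq g+N_L$ I would prove as in Section~\ref{ssec_ideals}, by downward induction on the $i$ with $w\in\gamma_i(L^\Q)=\Gamma_i(G^\Q)$: the base $i=c$ is immediate since then $\prod_j\kappa_j(g,w)^{r_j}=e$ and \eqref{eq_BCH_inv_1} reads $g+w=g\ast w$, and in the inductive step one rewrites $g\ast w=(g+w)\ast Q(g,w)^{-1}$, notes that $Q(g,w)^{-1}\in N_L\cap\gamma_i(G^\Q)$, and applies the induction hypothesis to the pair $(g+w,\,Q(g,w)^{-1})$ to land in $(g+w)+N_L=g+N_L$.

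The quantitative bound then follows from two containments. Every generator $g^{f(c-i)}$ of $N_L$ has $g\in N$ and hence lies in the subgroup $N$, so $N_L\subseteq N$; and taking $i=1$ in Definition~\ref{df_N_L}, the subgroup $N_L$ contains $g^{f(c-1)}$ for every $g\in\Gamma_1(G^\Q)\cap N=N$, so $N^{f(c-1)}\subseteq N_L$. Therefore $[G:N_L]=[G:N]\,[N:N_L]\le[G:N]\,[N:N^{f(c-1)}]$. Since $N$ is a finite-index, hence finitely generated torsion-free nilpotent, subgroup of $G$, it admits a central series of length $h(G)$ with infinite cyclic quotients, and $u^{m}\in N^{m}$ for every $u\in N$; a short computation along this series gives $[N:N^{m}]\le m^{h(G)}$ for the fixed exponent $m=f(c-1)$. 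Setting $M_2:=f(c-1)^{h(G)}$ and combining with $[L:N_L]=[G:N_L]$ yields $[L:N_L]\le M_2[G:N]$.

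The main obstacle, compared with the purely Lie-theoretic computation of Section~\ref{ssec_ideals}, is that group commutators are not multilinear: a basic commutator built from an element of $N_L$ and arbitrary elements of $G$ need not return to $N_L$ but only to a bounded power $N_L^{\Lambda^{c}}$ of it, and the rational exponents $r_j,s_j$ in the inverse Baker-Campbell-Hausdorff formulas force careful divisibility bookkeeping. This difficulty is entirely absorbed by Lemma~\ref{lem_kappa_internal} and the super-exponentially growing function $f$ of Definition~\ref{df_N_L}, which are set up precisely so that $\Lambda$-th powers and $f$-th powers can be extracted back inside $N_L$; once that lemma is available, the coset identity and the index estimate proceed formally, exactly parallel to the ideal-to-normal case.
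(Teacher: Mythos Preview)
Your proposal is correct and follows essentially the same approach as the paper: both prove the coset identity $g+N_L=g\ast N_L$ using \eqref{eq_BCH_inv_1} and Lemma~\ref{lem_kappa_internal}, with the inclusion $g\ast N_L\subseteq g+N_L$ handled by downward induction along the lower central series, and both deduce the index bound from $N^{f(c-1)}\subseteq N_L\subseteq N$. Your constant $M_2=f(c-1)^{h(G)}$ agrees with the paper's $f(c-1)^{\dim_\Q L^\Q}$ since $h(G)=\dim_\Q L^\Q$ for an LR-group; the only cosmetic slip is that in your inductive step, if $w\in\gamma_i(L^\Q)$ then $Q(g,w)^{-1}$ lies in $N_L\cap\gamma_{i+1}(G^\Q)$ rather than $\gamma_i$, but this is precisely what the induction hypothesis needs.
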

	\begin{proof}
		Let $g\in G$. We claim that $g + \mathcal{L}(G,N, \Lambda) = g\ast \mathcal{L}(G,N, \Lambda)$. First, for the inclusion $g+\mathcal{L}(G,N, \Lambda) \subset g\ast \mathcal{L}(G,N, \Lambda)$, take $h\in \mathcal{L}(G,N, \Lambda)$. Now, we find 
		$$g + h = g\ast h \ast \prod_{j=1}^{N_1} (\kappa_j(g,h))^{r_j}.$$
		By Lemma \ref{lem_kappa_internal}, we know that $\prod_{j=1}^{N_1} (\kappa_j(g,h))^{r_j} \in \mathcal{L}(G,N, \Lambda)$, so $g+h = g\ast h'$ with $h' = h\ast \prod_{j=1}^{N_1} (\kappa_j(g,h))^{r_j}\in \mathcal{L}(G,N, \Lambda)$. This ends the first part.
		
		Now, we will proceed to show that for all $g\in G$ and all $h\in \mathcal{L}(G,N, \Lambda)$ there exists $h'\in \mathcal{L}(G,N, \Lambda)$ such that $g\ast h = g+h'$, i.e. the inclusion $g\ast \mathcal{L}(G,N, \Lambda) \subset g+\mathcal{L}(G,N, \Lambda)$. We will do this by induction on $h\in \Gamma_i(G^\Q)$ for $1\leq i \leq c$.
		
		Take first $h\in \Gamma_c(G^\Q)\cap \mathcal{L}(G,N, \Lambda)$. In particular, $h$ is central in $G^\Q$. Therefore, $[g,h]_G = e$ and Equation \eqref{eq_BCH_inv_1} tells us that $g+h = g\ast h$, so the result holds for the base step $i=c$ with $h = h'$.
		
		Suppose the result holds for all $h\in \Gamma_i(G^\Q)\cap \mathcal{L}(G,N, \Lambda)$. Take $h_2 \in \Gamma_{i-1}(G^\Q)\cap \mathcal{L}(G,N, \Lambda)$. Now, by Equation \eqref{eq_BCH_inv_1}
		$$g\ast h_2 = (g+h_2)\ast \left(\prod_{j=1}^\infty (\kappa_j(g,h_2))^{r_j}\right)^{-1}.$$
		We know that $\prod_{j=1}^\infty (\kappa_j(g,h_2))^{r_j}$ lies in $\Gamma_{i+1}(G^\Q)$, so the induction hypothesis can be applied:
		$$(g+h_2)\ast \left(\prod_{j=1}^\infty (\kappa_j(g,h_2))^{r_j}\right)^{-1} = (g+h_2) + h_3$$
		for some $h_3 \in \mathcal{L}(G,N, \Lambda)$. In conclusion, $g\ast h_2 = g + (h_2+h_3)$. By the principal of induction, the general claim now follows.
		
		It is clear that $[G:\mathcal{L}(G,N, \Lambda)] = [L:\mathcal{L}(G,N, \Lambda)]$, since all cosets agree. By construction, we have $N^{f(c-1)} \leq \mathcal{L}(G,N, \Lambda) \leq N$ with $f$ as in Definition \ref{df_LGN}, and thus $[G:\mathcal{L}(G,N, \Lambda)] \leq f(c-1)^{\dim_\Q L^\Q}[G:N]$, where $f(c-1)^{\dim_\Q L^\Q}$ does not depend on the ideal $I$.
	\end{proof}
	This statement finishes the proof of Proposition \ref{prop_normal_to_ideal} by setting $I = \mathcal{L}(G,N, \Lambda)$. 
	\subsection{Proof of Theorem \ref{thm_intro_RFG_RFL}}
		In this subsection, we will prove the two claims made at the start of this section. This includes the proof of Theorem \ref{thm_intro_RFG_RFL}.
		\begin{thm} \label{thm_from_I_to_LR}
			If $G_1$ is an $\mathcal{I}$-group and $H\leq \Aut(G_1)$, then there exists an $H$-invariant LR-group $G_2 \leq G_1$ such that $\RF_{G_1, \mathcal{P}_{H\curvearrowright G_1}} = \RF_{G_2, \mathcal{P}_{H\curvearrowright G_2}}$.
		\end{thm}
		\begin{proof}
			If $H$ is trivial, i.e. $H = \{\Id\}$, then this is essentially Corollary \ref{cor_from_I_to_LR}. In the general case, it suffices to find any $H$-invariant $G_2 \lhd_f G_1$ by Theorem \ref{thm_intro_Q_inv2}. The existence of this $G_2$ is guaranteed by Subsection \ref{sec_normal_to_ideal}. Indeed, if $G$ is an $\mathcal{I}$-group, then we can take, for example, $\mathcal{L}(G,N, \Lambda)$ with $N = G$. This is an $H$-invariant LR-group that is normal in $G$. Its index in $G$ is finite by the observation that $G^{f(c-1)} \leq \mathcal{L}(G,N, \Lambda) \leq G$ with $f$ as in Definition \ref{df_LGN}. 
		\end{proof}
		
		\begin{nota}
			Let $L$ denote a Lie $R$-algebra, and let $H\leq \Aut(L)$. Let $\mathcal{P}_{H\curvearrowright L}$ denote the set of $H$-invariant ideals of $L$.
		\end{nota}
		\begin{proof}[Proof of Theorem \ref{thm_intro_RFG_RFL}]
			Let $G$ be an LR-group with underlying Lie ring structure $L$. Let $H \leq \Aut(G)$. We must prove that  $\RF_{G, \mathcal{P}_{H\curvearrowright G}} = \RF_{L, \mathcal{P}_{H\curvearrowright L}}^{\Gui}$. Note that $\Aut(G) = \Aut(L)$ by Proposition \ref{prop_KQ_maps}, so the notation $\mathcal{P}_{H\curvearrowright L}$ makes sense.
			
			Fix a finite generating set with corresponding word norm $\norm{\cdot}_G$ on $G$ and fix a Guivarc'h length $l_G$ on $L$. By Proposition \ref{prop_Guivarch}, we find a constant $C>0$ such that
			$$\dfrac{1}{C}\norm{g}_G \leq l_G(g) \leq C\norm{g}_G .$$
			
			We start by showing that $\RF_{G, \mathcal{P}_{H\curvearrowright G}} \preceq \RF_{L, \mathcal{P}_{H\curvearrowright L}}^{\Gui}$. Take $0< \norm{g}_G \leq r$. There is an $H$-invariant ideal $I$ of $L$ such that $g\notin I$ and $D_{L, \mathcal{P}_{H\curvearrowright L}}(g) = [L:I]$. Proposition \ref{prop_ideal_to_normal} gives us an $H$-invariant normal subgroup $N\subset I$ such that $[G:N] \leq M_1[L:I]$. Since $g\notin N$, it follows that 
			$$D_{G, \mathcal{P}_{H\curvearrowright G}}(g) \leq [G:N] \leq M_1[L:I] = M_1D_{L, \mathcal{P}_{H\curvearrowright L}}(g).$$
			Since $\norm{g}_G \leq r$, we have $l_G(g) \leq Cr$. Therefore, $D_{L,\mathcal{P}_{H\curvearrowright L}}(g) \leq \RF_{L,\mathcal{P}_{H\curvearrowright L}}^{\Gui}(Cr)$.
			Taking the maximum over all $g\in G$ with $0<\norm{g}_G \leq r$ over the inequality
			$$D_{G,\mathcal{P}_{H\curvearrowright G}}(g) \leq M_1\RF_{L,\mathcal{P}_{H\curvearrowright L}}^{\Gui}(Cr) $$
			shows that $\RF_{G,\mathcal{P}_{H\curvearrowright G}}(r) \leq M_1\RF_{L,\mathcal{P}_{H\curvearrowright L}}^{\Gui}(Cr)$ and therefore $\RF_{G,\mathcal{P}_{H\curvearrowright G}} \preceq \RF_{L, \mathcal{P}_{H\curvearrowright L}}^{\Gui}$.
			
			Completely analogously, one shows the inequality 
			$$\RF_{L, \mathcal{P}_{H\curvearrowright L}}^{\Gui}(r) \leq M_2\RF_{G, \mathcal{P}_{H\curvearrowright G}}(Cr)$$ by using Proposition \ref{prop_normal_to_ideal}, finishing the proof.
		\end{proof}
		By taking $H$ trivial, we see that $\RF_G = \RF_{L}^{\Gui}$ for LR-groups $G$ with underlying Lie ring structure $L$. Combining the previous theorems with Proposition \ref{prop_RF_fin_ext_Hinv}, we obtain the following result:
		\begin{cor} \label{cor_Gtilde_to_L}
			If $\Gamma$ is a virtually nilpotent group, then there exists a Lie ring $L$ and a finitely generated subgroup $H$ of $\Aut(L)$ such that $\RF_{\Gamma} = \RF_{L, \mathcal{P}_{H\curvearrowright L}}$.
		\end{cor}
	\section{Calculation of $\RF_L$} \label{sec_upper}
	In this section, we will focus on Theorem \ref{thm_intro_exact}. In particular, this means that we will define a number $\delta\in\N$ such that $\RF_{L, \mathcal{P}_{H \curvearrowright L}} = \log^\delta$ both for vector norms as Guivarc'h lengths. Here, $L$ is a Lie ring and $H$ is a finitely generated subgroup of $\Aut(L)$. This $\delta$ will be the same as the one given in \cite{dere2025residual}.
	\begin{itemize}
		\item In the first part of Subsection \ref{ssec_delta}, we will define $\delta$ in two different ways. The definition of $\delta$ depends on a choice of a field, and we will therefore write $\delta(L^\F , H)$ in what follows. 
		\item In the second part of Subsection \ref{ssec_delta}, we will show that the value $\delta(L^\F , H)$ is the same for all algebraically closed fields if the characteristic is zero or sufficiently large. 
		\item One of the equivalent definitions of $\delta(L^\F , H)$ uses a formulation in terms of an intersection of ideals $I(L^\F, H)$. Subsection \ref{ssec_delta_Zp} shows that there exists an ideal $I_\delta$ of $L$ such that $I_\delta \otimes_\Z \F = I(L^\F, H)$ for all algebraically closed fields if the characteristic is zero or sufficiently large.
		\item Subsection \ref{ssec_delta_thms} uses these observations to deduce Theorem \ref{thm_intro_exact}.
		\item Finally, the last subsection shows that the $\delta$ from Theorem \ref{thm_intro_exact} only depends on profinite invariants.
	\end{itemize}
	We will write $\Z_{p^k}$ for the ring $\Z/p^k\Z$. In particular, $\Z_p$ will be the field on $p$ elements.
	\subsection{Definition of $\delta$} \label{ssec_delta}
	In this subsection, we will first define the value $\delta$ for a general finite-dimensional Lie algebra $\mathfrak{g}$ over a field $\F$. Then, in Lemma \ref{lem_dimension_alg_closed}, we will restrict our attention to this value for $L^\F$ for algebraically closed fields $\F$.

	\begin{df} \label{df_delta}
		Let $\mathfrak{g}$ denote a finite-dimensional Lie algebra over a field $\F$ and let $H\leq \Aut(\mathfrak{g})$. Define
		\begin{equation*}
			\begin{split}
				\delta_1(\mathfrak{g}, H) & = \min\{\max_{i=1}^k\{\dim_\F \mathfrak{g}/I_i\}\mid I_1, \ldots , I_k \in \mathcal{P}_{H\curvearrowright \mathfrak{g}}, \, \bigcap_{i=1}^k I_i = 0\};\\
				\delta_2(\mathfrak{g}, H) & = \max\{\min\{\dim_\F \mathfrak{g}/I\mid v\notin I\in \mathcal{P}_{H\curvearrowright \mathfrak{g}}\}\mid 0\neq v \in \mathfrak{g}\}.
			\end{split}
		\end{equation*}
	\end{df}
	The definition $\delta_1(\mathfrak{g}, H)$ is the one that can be found in \cite{dere2025residual}, and we show that it is equal to the value $\delta_2(\mathfrak{g}, H)$. Before doing so, let us remark that these values can be determined on concrete examples. This is done for example in the papers \cite{math_virt_ab, dere2025twostep} for virtually abelian groups and two-step nilpotent groups. In the virtually abelian setting, we see that $\delta_1(\C^n,H)$ equals the largest dimension of an absolutely irreducible subspace with respect to the action of $H$.
	
	Let us now first rephrase the meaning of $\delta_2(\mathfrak{g}, H)$.
	\begin{lemma} \label{lem_delta2_intersections}
		Let $\mathfrak{g}$ denote a finite dimensional Lie algebra over a field $\F$, and $H\leq \Aut(\mathfrak{g})$. Then, $\delta_2(\mathfrak{g}, H)$ is the number such that
		$$\bigcap\{I \in \mathcal{P}_{H\curvearrowright \mathfrak{g}}\mid \dim_\F \mathfrak{g}/I < \delta_2(\mathfrak{g}, \F)\} \neq 0$$
		and $$\bigcap\{I \in \mathcal{P}_{H\curvearrowright \mathfrak{g}}\mid \dim_\F \mathfrak{g}/I \leq \delta_2(\mathfrak{g}, \F)\} = 0.$$
	\end{lemma}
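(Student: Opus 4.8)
The plan is to unwind the definition of $\delta_2(\mathfrak{g},\F)$ as a max--min and translate it into the asserted statement about intersections of ideals. Throughout, write $\delta = \delta_2(\mathfrak{g},\F)$ and, for a non-negative integer $d$, set
$$J(d) = \bigcap\{I\lhd\mathfrak{g}\mid \dim_\F\mathfrak{g}/I\leq d\}.$$
Since $\mathfrak{g}$ is finite-dimensional, the set of possible codimensions of proper ideals is a non-empty finite subset of $\{1,\dots,\dim_\F\mathfrak{g}\}$ (we may assume $\mathfrak{g}\neq 0$, as otherwise the statement is vacuous), so all the minima and maxima occurring are over non-empty finite sets and hence attained; in particular $\delta\geq 1$. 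I would first record two routine observations: $J$ is non-increasing in $d$ (enlarging $d$ enlarges the family being intersected), and since codimensions are integers we have $J(\delta-1) = \bigcap\{I\lhd\mathfrak{g}\mid \dim_\F\mathfrak{g}/I<\delta\}$. Thus the two displayed conditions in the lemma are precisely $J(\delta-1)\neq 0$ and $J(\delta)=0$, and by the monotonicity of $J$ these two conditions determine $\delta$ uniquely (it is the smallest $d$ with $J(d)=0$), so it suffices to verify them for $\delta=\delta_2(\mathfrak{g},\F)$.

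Next I would prove $J(\delta)=0$. Let $v\in\mathfrak{g}$ be non-zero. By the definition of $\delta$ as a maximum over all non-zero vectors, $\min\{\dim_\F\mathfrak{g}/I\mid v\notin I\lhd\mathfrak{g}\}\leq\delta$, so there is an ideal $I$ with $v\notin I$ and $\dim_\F\mathfrak{g}/I\leq\delta$. Hence $v\notin J(\delta)$. As $v$ was an arbitrary non-zero vector, $J(\delta)=0$.

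Then I would prove $J(\delta-1)\neq 0$. By the definition of $\delta$ as a maximum, there is a non-zero $v_0\in\mathfrak{g}$ attaining it, that is, $\min\{\dim_\F\mathfrak{g}/I\mid v_0\notin I\lhd\mathfrak{g}\}=\delta$. Consequently every ideal $I$ with $v_0\notin I$ satisfies $\dim_\F\mathfrak{g}/I\geq\delta$; taking the contrapositive, every ideal $I$ with $\dim_\F\mathfrak{g}/I\leq\delta-1$ contains $v_0$. Therefore $v_0\in J(\delta-1)$, so $J(\delta-1)\neq 0$. Combining the two steps, together with the monotonicity remark for uniqueness, yields the lemma.

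I do not anticipate a genuine obstacle here; the argument is essentially a dualisation between the ``witnessing vector'' and the ``intersection of small-codimension ideals'' viewpoints. The only points requiring care are the bookkeeping between strict and non-strict inequalities (handled by integrality of dimensions) and the existence of the extremal vector $v_0$ and of the minimising ideals, both of which rest on finite-dimensionality of $\mathfrak{g}$.
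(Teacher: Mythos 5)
Your argument is correct and follows essentially the same route as the paper: you use the extremal vector attaining the maximum to show the strict-inequality intersection is non-trivial, and the definition of $\delta_2$ as a maximum over all non-zero vectors to show the non-strict intersection is trivial. The only cosmetic difference is that you prove $J(\delta)=0$ directly rather than by contradiction, and you spell out the uniqueness of the number satisfying the two conditions, which the paper leaves implicit.
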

	\begin{proof}
		Take a non-zero vector $v\in\mathfrak{g}$ realizing the maximum in the definition of $\delta_2(\mathfrak{g}, H)$. Now,
		$$\delta_2(\mathfrak{g}, H) = \min\{\dim_\F \mathfrak{g}/I\mid v\notin I\in \mathcal{P}_{H\curvearrowright \mathfrak{g}}\},$$
		so $v \in J$ for all $J\in \mathcal{P}_{H\curvearrowright \mathfrak{g}}$ with $\dim_\F \mathfrak{g}/J < \delta_2(\mathfrak{g}, H)$. Hence,
		$$0 \neq v \in \bigcap\{I \in \mathcal{P}_{H\curvearrowright \mathfrak{g}}\mid \dim_\F \mathfrak{g}/I < \delta_2(\mathfrak{g}, H)\}.$$
		Suppose by contradiction that $\{I \in \mathcal{P}_{H\curvearrowright \mathfrak{g}}\mid \dim_\F \mathfrak{g}/I \leq \delta_2(\mathfrak{g}, H)\}$ has a non-trivial intersection, say $0\neq w\in \mathfrak{g}$ lies in this intersection, then
		$$\min\{\dim_\F \mathfrak{g}/I\mid w\notin I\in \mathcal{P}_{H\curvearrowright \mathfrak{g}}\} > \delta_2(\mathfrak{g}, H).$$
		However, by definition, $\min\{\dim_\F \mathfrak{g}/I\mid w\notin I\in \mathcal{P}_{H\curvearrowright \mathfrak{g}}\} \leq \delta_2(\mathfrak{g}, H)$, contradicting the inequality above. Hence, such a non-trivial $w\in \mathfrak{g}$ does not exist, i.e.
		$$\bigcap\{I \in \mathcal{P}_{H\curvearrowright \mathfrak{g}}\mid \dim_\F \mathfrak{g}/I \leq \delta_2(\mathfrak{g}, H)\} = 0.$$
	\end{proof}
	\begin{lemma}
		Let $\mathfrak{g}$ denote a finite dimensional Lie algebra over a field $\F$, and $H \leq \Aut(\mathfrak{g})$. Then, $\delta_1(\mathfrak{g}, H) = \delta_2(\mathfrak{g}, H)$.
	\end{lemma}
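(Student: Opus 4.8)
The plan is to establish the two inequalities $\delta_2(\mathfrak{g},\F)\le\delta_1(\mathfrak{g},\F)$ and $\delta_1(\mathfrak{g},\F)\le\delta_2(\mathfrak{g},\F)$ separately; the second one will rely on Lemma~\ref{lem_delta2_intersections}.

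For $\delta_2\le\delta_1$, I would fix a family of ideals $I_1,\dots,I_k$ with $\bigcap_{i=1}^k I_i=0$ realizing the minimum in the definition of $\delta_1(\mathfrak{g},\F)$, so that $\max_i\dim_\F\mathfrak{g}/I_i=\delta_1(\mathfrak{g},\F)$. Given any nonzero $v\in\mathfrak{g}$, triviality of the intersection forces $v\notin I_j$ for at least one index $j$, whence $\min\{\dim_\F\mathfrak{g}/I\mid v\notin I\lhd\mathfrak{g}\}\le\dim_\F\mathfrak{g}/I_j\le\delta_1(\mathfrak{g},\F)$. Taking the maximum over all nonzero $v$ gives $\delta_2(\mathfrak{g},\F)\le\delta_1(\mathfrak{g},\F)$.

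For $\delta_1\le\delta_2$, Lemma~\ref{lem_delta2_intersections} tells us that the collection $\mathcal{I}=\{I\lhd\mathfrak{g}\mid\dim_\F\mathfrak{g}/I\le\delta_2(\mathfrak{g},\F)\}$ has trivial intersection. The remaining point is to extract a \emph{finite} subcollection $I_1,\dots,I_k\in\mathcal{I}$ with $\bigcap_{i=1}^k I_i=0$: such a family is admissible in the definition of $\delta_1(\mathfrak{g},\F)$, and since each of its members has codimension at most $\delta_2(\mathfrak{g},\F)$ we get $\delta_1(\mathfrak{g},\F)\le\max_i\dim_\F\mathfrak{g}/I_i\le\delta_2(\mathfrak{g},\F)$. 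The extraction is the standard finite-dimensionality argument: start from any $I_1\in\mathcal{I}$, and having chosen $I_1,\dots,I_m$ with $J_m:=\bigcap_{i\le m}I_i\ne 0$, use $\bigcap_{I\in\mathcal{I}}I=0$ to pick $I_{m+1}\in\mathcal{I}$ with $J_m\not\subseteq I_{m+1}$, so that $\dim_\F J_{m+1}<\dim_\F J_m$; since these dimensions are non-negative integers the process must stop at some $J_k=0$.

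I do not expect a genuine obstacle here; the only subtlety worth flagging is precisely this passage from the (possibly infinite) family produced by Lemma~\ref{lem_delta2_intersections} to a finite one, together with the harmless observation that the degenerate case $\mathfrak{g}=0$ is vacuous (or tacitly excluded, since otherwise the maximum defining $\delta_2$ is over an empty set).
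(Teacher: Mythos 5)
Your proof is correct and follows essentially the same route as the paper: both rely on Lemma~\ref{lem_delta2_intersections} together with the finite-dimensionality argument to extract a finite subfamily with trivial intersection, yielding $\delta_1\le\delta_2$. The only (harmless) difference is in the reverse inequality: you argue $\delta_2\le\delta_1$ directly from the definitions by observing that a finite family with trivial intersection must separate every nonzero vector, whereas the paper instead invokes the other half of Lemma~\ref{lem_delta2_intersections} (nontriviality of the intersection of ideals of codimension $<\delta_2$) to conclude $\delta_1>\delta_2-1$.
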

	\begin{proof}
		Since $\mathfrak{g}$ is finite dimensional and
		$$\bigcap\{I \in \mathcal{P}_{H\curvearrowright \mathfrak{g}}\mid \dim_\F \mathfrak{g}/I \leq \delta_2(\mathfrak{g}, H)\} = 0,$$
		we can surely take a finite subset
		$$\{I_i\mid 1\leq i \leq k\} \subset \{I \in \mathcal{P}_{H\curvearrowright \mathfrak{g}}\mid \dim_\F \mathfrak{g}/I \leq \delta_2(\mathfrak{g}, H)\}$$
		with trivial intersection. By definition, this implies that $\delta_1(\mathfrak{g}, H) \leq \delta_2(\mathfrak{g}, H)$.
		
		On the other hand, one cannot take a finite subset of $\{I \in \mathcal{P}_{H\curvearrowright \mathfrak{g}}\mid \dim_\F \mathfrak{g}/I \leq \delta_2(\mathfrak{g}, H)-1\}$ with trivial intersection, since
		$$\bigcap\{I\in \mathcal{P}_{H\curvearrowright \mathfrak{g}}\mid \dim_\F \mathfrak{g}/I \leq \delta_2(\mathfrak{g}, H)-1\} \neq 0. $$
		Therefore, surely, $\delta_1(\mathfrak{g}, H) > \delta_2(\mathfrak{g}, H)-1$, or equivalently $\delta_1(\mathfrak{g}, H) \geq \delta_2(\mathfrak{g}, H)$.
	\end{proof}
	\begin{nota}
		If $L$ is a Lie ring, then we have its completion $L^{\F} = L \otimes_\Z \F$. Note that this also makes sense when $\F$ is a field of prime characteristic. If $H \leq \Aut(L)$ is given, we will reinterpret it as $H \leq \Aut(L^\F)$ via $\{\xi \otimes_\Z \Id_\F \mid \xi \in H\} \leq \Aut(L^\F)$. 
	\end{nota}
	\begin{nota}
		Let $\mathcal{L}_{\rng}$ be the first order language of rings, i.e. the first order language with signature $\{0,1,+,-, \cdot\}$.
	\end{nota}
	\begin{lemma} \label{lem_dimension_alg_closed}
		Let $L$ denote a Lie ring and $H$ a finitely generated subgroup of $\Aut(L)$. For every $k\in \N$, there exists a number $M_k \in \N$ such that for any algebraically closed field $\F$ of characteristic zero or characteristic $p> M_k$ we have
		\begin{equation*}
			\dim_\F\bigcap\{I \in \mathcal{P}_{H\curvearrowright L^\F}\mid \dim_\F L^\F/I \leq k\} = \dim_\C\bigcap\{I \in \mathcal{P}_{H\curvearrowright L^\C}\mid \dim_\C L^\C/I \leq k\}.
		\end{equation*}
		In particular, there exists a number $M_\delta \in \N$ such that for any algebraically closed field $\F$ of characteristic zero or characteristic $p> M_\delta$ we have $\delta_2(L^\F, H) = \delta_2(L^\C, H)$.
	\end{lemma}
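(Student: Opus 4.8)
The plan is to view the quantity $\dim_\F\bigcap\{I\lhd L^\F\mid \dim_\F L^\F/I\le k\}$ as something cut out by algebraic conditions with coefficients in $\Z$, so that it behaves uniformly across fields of characteristic $0$ or large characteristic. Fix once and for all a $\Z$-basis $e_1,\dots,e_m$ of $L$; the structure constants $c_{ij}^\ell\in\Z$ then describe $L^\F$ over every field $\F$ simultaneously. An ideal $I\lhd L^\F$ with $\dim_\F L^\F/I\le k$ is the same as a subspace $I$ of dimension $\ge m-k$ satisfying $[L^\F,I]_L\subseteq I$. Equivalently, if we set $d=m-k$ (we may assume $d\ge 0$, otherwise both sides of the claimed equality are $m$), such an $I$ is determined by a point of the Grassmannian $\mathrm{Gr}(d,L^\F)$ lying in the closed subscheme $X_d$ defined by the ideal-condition, which is cut out by polynomial equations in the Plücker coordinates with coefficients in $\Z$ (the structure constants). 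The scheme-theoretic intersection $\bigcap_{I\in X_d(\F)} I$ is then the kernel of the evaluation map, and the point is that its dimension is controlled by ranks of matrices whose entries are polynomials in the $c_{ij}^\ell$.

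Concretely, I would proceed as follows. First, reduce to the case $k<m$. Second, describe $J_k(\F):=\bigcap\{I\lhd L^\F\mid \dim_\F L^\F/I\le k\}$ as follows: a vector $v\in L^\F$ lies in $J_k(\F)$ iff $v\in I$ for every $d$-dimensional ideal $I$ (with $d=m-k$); since the family of all such ideals over $\F$ is the set of $\F$-points of the affine scheme $X_d$ of finite type over $\Z$ described above, $J_k(\F)$ is the common solution space of the linear equations ``$v\in I$'' as $I$ ranges over $X_d(\F)$. Because $X_d$ is of finite type over $\Z$, one can choose, working over the generic point or over $\C$, finitely many ideals $I_1,\dots,I_N$ (defined by algebraic equations over $\overline{\Q}$, hence over a number field, hence with coefficients that are algebraic integers after clearing denominators) whose intersection already equals $J_k(\C)$; the matrix $A$ expressing the conditions $v\in I_1\cap\dots\cap I_N$ has entries that are (Plücker coordinates of these fixed ideals and hence) algebraic numbers, and $\dim_\C J_k(\C)=m-\operatorname{rank}_\C A$. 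Third, and symmetrically, over an algebraically closed $\F$ the space $J_k(\F)$ is an intersection of $\F$-ideals, and one shows $\dim_\F J_k(\F)\le \dim_\C J_k(\C)$ always, and equality once the characteristic avoids finitely many bad primes: the bad primes are exactly those dividing a suitable nonzero minor of a $\Z$-model of the relevant matrices (the $A$ above together with the equations defining $X_d$ as a $\Z$-scheme), so that the rank of $A$ modulo $p$ equals its rank in characteristic $0$. Set $M_k$ to be the largest such prime. Finally, for the ``in particular'' statement: by Lemma~\ref{lem_delta2_intersections} the value $\delta_2(L^\F,\F)$ is the unique integer $k$ with $\dim_\F J_{k-1}(\F)>0$ and $\dim_\F J_k(\F)=0$; since there are only finitely many relevant values of $k$ (namely $0\le k\le m$), taking $M_\delta=\max_{0\le k\le m}M_k$ makes all these dimensions agree between $L^\F$ and $L^\C$, hence $\delta_2(L^\F,\F)=\delta_2(L^\C,\C)$.

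The main obstacle I anticipate is making the second step rigorous: one needs that the (possibly infinite) family of ideals of bounded codimension can be replaced, uniformly in the field, by the $\F$-points of a fixed finite-type $\Z$-scheme, and that its scheme-theoretic image under the ``evaluation'' map $L\to\prod(L/I)$ commutes with the base changes $\Z\to\C$ and $\Z\to\F$ up to inverting finitely many primes. This is a standard spreading-out / constructibility argument (generic flatness, or simply: rank of a matrix of polynomials in the structure constants is locally constant away from the vanishing locus of finitely many minors), but it must be stated carefully because the intersection of ideals is a priori only upper-semicontinuous in dimension. A clean way to organize it is: (i) over $\C$ pick the finitely many ideals $I_1,\dots,I_N$ realizing the intersection, described by Plücker coordinates that are algebraic integers up to a common denominator $D$; (ii) the conditions defining each $I_j$ as an ideal are polynomial identities in those coordinates and the $c_{ij}^\ell$, valid over $\C$ hence over $\Z[1/D']$ for a suitable $D'$; (iii) for $p\nmid DD'$ the same Plücker points define ideals of $L^{\overline{\F_p}}$ whose intersection has dimension $\le \dim_\C J_k(\C)$, and a dimension-count using that $\operatorname{rank}A\bmod p=\operatorname{rank}_\C A$ for $p$ not dividing a fixed nonzero maximal minor of $A$ forces equality; (iv) the reverse inequality $\dim_\F J_k(\F)\ge\dim_\C J_k(\C)$ for the bad-prime-avoiding $p$ follows by the symmetric construction over $\overline{\F_p}$ (or simply because one can also lift an $\overline{\F_p}$-realization to characteristic $0$ by the same finite-type argument). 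Once this uniform-rank statement is in place, both displayed equalities are immediate.
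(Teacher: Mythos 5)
Your approach is genuinely different from the paper's. The paper shows that the equality of dimensions can be encoded as a sentence in the first-order language of rings (quantifying, with respect to the fixed $\Z$-basis of $L$, over coordinates of vectors and of spanning sets of candidate ideals) and then simply invokes the Lefschetz Principle. You instead try to unpack the underlying geometry directly, realizing codimension-$\le k$ ideals as $\F$-points of a finite-type $\Z$-scheme $X_d$ inside a Grassmannian and tracking ranks of incidence matrices modulo $p$. Both routes can be made to work, and yours makes the arithmetic content explicit; but the model-theoretic route packages away precisely the pitfalls you run into in step (iv).

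The gap is in the reverse inequality $\dim_\F J_k(\F)\ge\dim_\C J_k(\C)$. Neither ``the symmetric construction over $\overline{\F_p}$'' nor ``lift an $\overline{\F_p}$-realization to characteristic $0$'' works as stated: the finitely many ideals realizing $J_k(\overline{\F_p})$, and hence the minors whose non-vanishing you would need, depend on $p$, so you cannot fix a finite bad set in advance this way. Moreover, $\overline{\F_p}$-points of a finite-type $\Z$-scheme need not lift to characteristic zero at all. The correct argument is of a different shape: fix a model $J$ of $\bigcap_j I_j$ over $\mathcal{O}_K[1/D]$ for a number field $K$ (to get such a model you first need a density argument, namely that $\overline{\Q}$-points of $X_d$ are Zariski dense in $X_{d,\C}$, which is another step you elide when you say the $I_j$ may be ``defined over $\overline{\Q}$''). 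The condition ``$J\subseteq I$'' cuts out a closed subscheme $Z\subseteq X_d$; its open complement $U=X_d\setminus Z$ has empty fiber over $\C$ by construction, so by Chevalley the image of $U$ in $\mathrm{Spec}\,\mathcal{O}_K[1/D]$ is a finite set of closed points, and for primes outside it one gets $J\otimes\overline{\F_p}\subseteq J_k(\overline{\F_p})$ and hence the lower bound. This is a constructibility/spreading-out argument, not a ``symmetric'' rerun of (iii). The reduction of the ``in particular'' part to the displayed equality via Lemma~\ref{lem_delta2_intersections} and a maximum over $k\le m$ is fine and matches the paper.
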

	\begin{proof}
	Set
	$$d_k = \dim_\C\bigcap\{I \in \mathcal{P}_{H\curvearrowright L^\C}\mid \dim_\C L^\C/I \leq k\}.$$
	For the first statement, it suffices to show that the dimension above can be expressed as a sentence in the first order language of rings, $\mathcal{L}_{\rng}$. 
	Indeed, by the Lefschetz Principle given in \cite[Theorem 3.5.5]{logic}, the fact that  $$\dim_\F\bigcap\{I \in \mathcal{P}_{H\curvearrowright L^\F}\mid \dim_\F L^\F/I \leq k\} = d_k$$
	holds for $\F = \C$ implies that it holds for any algebraically closed field of characteristic zero or $p$ with $p > M_k$ for some number $M_k$. This implies the last statement as well, by taking $M_\delta = \max\{M_k \mid 0\leq k \leq n\}$, because then the first statement holds for all $k \in \N$ and $p > M_\delta$. In particular $\delta_2(L^\F, \F) = \delta_2(L^\C, \C)$ follows from Lemma \ref{lem_delta2_intersections}.
	
In order to show that the dimension can be expressed as a sentence in first order language, fix a field $\F$ and a $\Z$-basis of the Lie ring $L$. Vectors in $L^\F$ are then identified with their coordinates with respect to this fixed basis. Let $\{\xi_l \mid 1\leq l \leq s\}$ be a set of generators of $H\leq \Aut(L)$, supplemented with their inverses. With respect to the given basis, they are represented by integral matrices. Note that integers can be expressed in the language of rings.

If a vector $v\in L^\F$ is given, we have the equivalence
	\begin{equation} \label{eq_in_intersection}
		v \in \bigcap\{I \in \mathcal{P}_{H\curvearrowright L^\F}\mid \dim_\F L^\F/I = k\} \Leftrightarrow \left(\forall I \in \mathcal{P}_{H\curvearrowright L^\F}: \dim_\F L^\F/I = k\Rightarrow v \in I\right)		
	\end{equation}
Now, an ideal $I$ is represented by a set of vectors $\{w_1, \ldots , w_j\}$ that span the ideal. This way, the phrase $v\in I$ can be rephrased to 
		$$\exists \lambda_1, \ldots , \lambda_j \in \F: v = \lambda_1w_1 + \ldots + \lambda_jw_j.$$
		Furthermore, the part $\forall I \in \mathcal{P}_{H\curvearrowright L^\F}: \dim_\F L^\F/I = k$ can be encoded as
		\begin{multline*}
			\forall \{w_1, \ldots ,w_n\} \in L^\F: \det\{w_1, \ldots ,w_n\} \neq 0 \\
			\text{and } \forall 1\leq i\leq n-k:\forall 1\leq j\leq n: [w_i,w_j]_L \in \vct_\F\{w_1, \ldots , w_{n-k}\} \\
			\text{and } \forall 1\leq l \leq s: \forall 1\leq i \leq n-k: \xi_l(w_i) \in \vct_\F\{w_1, \ldots , w_{n-k}\}. 
		\end{multline*}
		Indeed, the ideal $I$ with $\dim_\F L^\F/I = k$ is generated by the first $n-k$ vectors of a basis of $L^\F$. The second part of the statement checks whether these base vectors span an ideal. Since $L$ is a Lie ring, the bracket $[\cdot, \cdot]_L$ has integral structure constants, therefore $[w_i,w_j]$ is an integral polynomial in the coordinates of $w_i$ and $w_j$. The third part of the statement checks whether $I$ is $H$-invariant.
		
		Combined, we have rephrased Equation \eqref{eq_in_intersection} using coordinates as a sentence in $\mathcal{L}_{\rng}$.  Hence, we can also rephrase 
		\begin{multline*}
			v \in \bigcap\{I \in \mathcal{P}_{H\curvearrowright L^\F}\mid \dim_\F L^\F/I \leq k\} \Leftrightarrow v \in \bigcap\{I \in \mathcal{P}_{H\curvearrowright L^\F}\mid \dim_\F L^\F/I = k\}\\ \text{and } \ldots \text{ and }v \in \bigcap\{I \in \mathcal{P}_{H\curvearrowright L^\F}\mid \dim_\F L^\F/I = 0\}
		\end{multline*}
	by splitting up the dimension.	Fixing $\delta\in \N$, the expression
		\begin{equation*} \label{eq_dimension_field}
			\dim_\F\left(\bigcap\{I \in \mathcal{P}_{H\curvearrowright L^\F}\mid \dim_\F L^\F/I \leq k\}\right) = \delta 
		\end{equation*}
		can now be rephrased  to the conjunction of the two expressions 
		\begin{multline*}
			\exists \{v_1, \ldots ,v_n\} \in L^\F: \det\{v_1, \ldots ,v_n\} \neq 0 
			\text{ and }\forall 1\leq j \leq \delta: v_j \in \bigcap\{I \in \mathcal{P}_{H\curvearrowright L^\F}\mid \dim_\F L^\F/I \leq k\}
		\end{multline*}
which states that $\dim \geq \delta$ and		\begin{multline*}
			\forall \{w_1, \ldots ,w_n\} \in L^\F: \det\{w_1, \ldots ,w_n\} \neq 0 \\
			\text{and }\forall 1\leq j \leq \delta: w_j \in \bigcap\{I \in \mathcal{P}_{H\curvearrowright L^\F}\mid \dim_\F L^\F/I \leq k\} \\ \Rightarrow w_{\delta+1} \notin \bigcap\{I \in \mathcal{P}_{H\curvearrowright L^\F}\mid \dim_\F L^\F/I \leq k\},
		\end{multline*}
		which states that $\dim \leq \delta$. We conclude that the dimension is expressed as a sentence in $\mathcal{L}_{\rng}$ and thus the lemma holds.
	\end{proof}
	Reduction from an algebraically closed field of characteristic $p$ to $\Z_p$ is possible for infinitely many prime, as shown in \cite[Proposition 6.7 \& Corollary 6.8]{dere2025residual}, by making use of Chebotarev's Density Theorem. This insight leads to the upper bound of Theorem \ref{thm_intro_exact}.
	\begin{lemma}\label{lem_C_Zp_corr}
		Let $\norm{\cdot}_L$ denote a norm on $L$. There exists a constant $C>0$ such that for every non-trivial $v\in L$ with $\norm{v}_L \leq r$, there exists a prime $p\leq C\log(r)+C$ such that $v \notin pL$ and $\delta_1(L^{\Z_p}, H) \leq \delta_1(L^\C, H)$. In particular, there are infinitely many primes $p$ such that $\delta_1(L^{\Z_p}, H) \leq \delta_1(L^\C, H)$.
	\end{lemma}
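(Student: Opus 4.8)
The plan is to assemble the statement from the reduction result just cited together with Lemma \ref{lem_dimension_alg_closed}, plus a short Chebyshev-type estimate that keeps the chosen prime of size $O(\log r)$ while avoiding the finitely many primes $p$ with $v \in pL$.

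First I would build the set of ``good'' primes. By \cite[Proposition 6.7 \& Corollary 6.8]{dere2025residual} there is a set $\mathcal{S}$ of primes of positive density such that $\delta_1(L^{\Z_p},\Z_p) \leq \delta_1(L^{\F},\F)$ for every $p \in \mathcal{S}$, where $\F$ denotes an algebraically closed field of characteristic $p$. Combining the equality $\delta_1 = \delta_2$ proved above with Lemma \ref{lem_dimension_alg_closed} gives $\delta_1(L^{\F},\F) = \delta_2(L^{\F},\F) = \delta_2(L^\C,\C) = \delta_1(L^\C,\C)$ whenever $\F$ is algebraically closed of characteristic $0$ or of prime characteristic $p > M_\delta$. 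Hence the set $\mathcal{S}' = \{p \in \mathcal{S} : p > M_\delta\}$ still has positive density, is in particular infinite, and satisfies $\delta_1(L^{\Z_p},\Z_p) \leq \delta_1(L^\C,\C)$ for every $p \in \mathcal{S}'$; the last assertion of the lemma is immediate from this.

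For the quantitative part I would fix a $\Z$-basis of $L$; by equivalence of norms on $L^\R$ there is a constant $C_0 > 0$ with $|w_i| \leq C_0\norm{w}_L$ for every coordinate $w_i$ of any $w \in L$. Given a non-trivial $v\in L$ with $\norm{v}_L \leq r$, let $d \geq 1$ be the gcd of the coordinates of $v$, so that $d \leq C_0 r$ and $v \in pL \Leftrightarrow p \mid d$. Since $\mathcal{S}'$ has positive density one has $\sum_{p \in \mathcal{S}',\, p \leq x}\log p \gg x$, and therefore $\prod_{p\in\mathcal{S}',\,p\le C\log(r)} p \geq r^{\,cC}$ for a fixed $c>0$ and all large $r$. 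Choosing $C$ with $cC > 1$ makes this product exceed $C_0 r \geq d$, so the primes in $\mathcal{S}' \cap [1, C\log(r)]$ cannot all divide $d$: some $p \in \mathcal{S}' \cap [1, C\log(r)]$ has $p \nmid d$, i.e. $v \notin pL$. Enlarging $C$ to absorb the bounded range of small $r$ (where $d$ is bounded and $\mathcal{S}'$ is infinite) then yields a prime $p \leq C\log(r) + C$ with $v \notin pL$ and $\delta_1(L^{\Z_p},\Z_p) \leq \delta_1(L^\C,\C)$, as required.

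The only genuine subtlety is that the cited reduction must be invoked in the form producing a set of primes of \emph{positive density}, not merely infinitely many, since the Chebyshev bound $\sum_{p\in\mathcal{S}',\,p\le x}\log p \gg x$ (equivalently a linear lower bound for $\log\prod_{p\in\mathcal{S}',p\le x}p$) genuinely requires this; once such a set is available, the remaining step is a routine calibration of the constant $C$ against the trivial bound $d \leq C_0 r$ on the gcd of the coordinates of $v$.
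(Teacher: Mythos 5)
The paper itself supplies no proof of this lemma: it is stated immediately after a one-sentence pointer to \cite[Proposition 6.7 \& Corollary 6.8]{dere2025residual}, so there is nothing internal to compare your argument against, and your reconstruction has to be judged on its own terms. Judged that way, it is correct and complete, and it isolates exactly the right ingredients: Lemma \ref{lem_dimension_alg_closed} with the threshold $M_\delta$ to pass from $\overline{\F_p}$ to $\C$, the cited reduction to pass from $\overline{\F_p}$ to $\Z_p$, and the arithmetic observation that $v\in pL$ iff $p$ divides the gcd $d$ of the coordinates, with $d \leq C_0 r$.

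The only point I would press on is the one you yourself flag: your Chebyshev-type bound $\sum_{p\in\mathcal{S}',\,p\leq x}\log p \gg x$ genuinely needs $\mathcal{S}'$ to have positive density (or at least lower logarithmic density bounded away from $0$), whereas the paper's surrounding prose speaks only of ``infinitely many primes''. In practice, reductions of this kind (checking first-order conditions modulo $p$ for a fixed Lie ring) typically exclude only the primes dividing some fixed nonzero integer, i.e.\ a cofinite set, in which case your estimate is immediate and you could even replace it by the classical Chebyshev bound $\prod_{p\leq x}p \geq e^{cx}$ minus a bounded correction. It would strengthen your write-up to state explicitly that you are invoking \cite{dere2025residual} in the form ``for all $p$ outside a fixed finite set'' (or at least ``for a set of primes of positive density''), since the bare phrase ``infinitely many'' is too weak to carry the quantitative conclusion $p\leq C\log(r)+C$. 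With that adjustment the proof is sound, and your handling of the small-$r$ regime by enlarging the additive constant is also correct.
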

	\subsection{The ideal $I_\delta$} \label{ssec_delta_Zp}
	By Lemma \ref{lem_dimension_alg_closed}, we know that
	$$\dim_{\F_p}\bigcap\{I \in \mathcal{P}_{H\curvearrowright L^{\F_p}}\mid \dim_{\F_p} L^{\F_p}/I < \delta_1(L^\C, H)\} = \dim_\C\bigcap\{I \in \mathcal{P}_{H\curvearrowright L^{\C}}\mid \dim_\C L^\C/I < \delta_1(L^\C, H)\},$$
	where $\F_p$ is an algebraically closed field of sufficiently large prime characteristic. However, although the dimensions are equal, it is still unclear whether there is a relation between these two ideals themselves. In this subsection, we show that there exists an ideal $I_\delta$ of $L$ such that both can be obtained as $I_\delta \otimes_\Z \F$, where $\F = \F_p$ and $\F = \C$ respectively. This will be done by first passing to a number field, and then we will apply the theory of Galois Descent.
	\begin{lemma} \label{lem_dim_number_field}
		Let $L$ denote a Lie ring, $H \leq \Aut(L)$ finitely generated, and $k\in\N$. There exists a number field $\F$ such that
		\begin{equation*}
			\dim_\F\bigcap\{I \in \mathcal{P}_{H \curvearrowright L^\F}\mid \dim_\F L^\F/I < k\} = \dim_\C\bigcap\{I \in\mathcal{P}_{H \curvearrowright L^\C}\mid \dim_\C L^\C/I < k\}.
		\end{equation*}
		In particular, there is a number field such that $\delta_2(L^\C, H) = \delta_2(L^\F, H)$.
	\end{lemma}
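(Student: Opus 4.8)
The plan is to bootstrap from Lemma~\ref{lem_dimension_alg_closed}: applied to the algebraically closed characteristic-zero field $\overline{\Q}$ it gives $\dim_{\overline{\Q}} I_k(L^{\overline{\Q}}) = \dim_\C I_k(L^\C)$, where I abbreviate $I_k(M) := \bigcap\{I\lhd M \mid \dim M/I < k\}$ for a finite-dimensional Lie algebra $M$ over a field. So it remains to replace $\overline{\Q}$ by a suitable \emph{number} field, and this is a routine ``spreading out'' argument once a monotonicity observation is in place.

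First I would record the monotonicity: if $\F\subseteq\F'$ and $I\lhd L^\F$ has codimension $<k$, then $I\otimes_\F\F'$ is an ideal of $L^{\F'}$ of the same codimension, and since $-\otimes_\F\F'$ commutes with finite intersections of subspaces, $I_k(L^{\F'})\subseteq I_k(L^\F)\otimes_\F\F'$; hence $\dim_{\F'}I_k(L^{\F'})\leq \dim_\F I_k(L^\F)$. In particular, for every number field $\F\subseteq\overline{\Q}$ one has $\dim_\F I_k(L^\F)\geq \dim_{\overline{\Q}}I_k(L^{\overline{\Q}})=\dim_\C I_k(L^\C)$, so it suffices to produce a number field for which the reverse inequality holds.

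For that, note that $L^{\overline{\Q}}$ is finite-dimensional, so finitely many codimension-$<k$ ideals $I_1,\dots,I_m\lhd L^{\overline{\Q}}$ already satisfy $I_1\cap\cdots\cap I_m = I_k(L^{\overline{\Q}})$. Fixing a $\Z$-basis of $L$, each $I_j$ is spanned by finitely many vectors with coordinates in $\overline{\Q}$, hence in some number field; let $\F$ be the compositum of these finitely many number fields. Then each $I_j$ descends, $I_j = I_j^\F\otimes_\F\overline{\Q}$ with $I_j^\F\lhd L^\F$ of the same codimension $<k$, whence $\big(\bigcap_j I_j^\F\big)\otimes_\F\overline{\Q}\subseteq \bigcap_j I_j = I_k(L^{\overline{\Q}})$ and therefore $\dim_\F I_k(L^\F)\leq \dim_\F\bigcap_j I_j^\F\leq \dim_{\overline{\Q}}I_k(L^{\overline{\Q}})=\dim_\C I_k(L^\C)$. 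Together with the previous paragraph this yields the claimed equality.

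For the ``in particular'' clause I would run the above for each $k\in\{1,\dots,\dim_\Q L^\Q+1\}$ and take the compositum $\F$ of the resulting number fields; by monotonicity the equality $\dim_\F I_k(L^\F)=\dim_\C I_k(L^\C)$ survives this enlargement for every such $k$, and since $\delta_2(L^\C,\C)\leq\dim_\Q L^\Q$, the characterization in Lemma~\ref{lem_delta2_intersections} (which reads off $\delta_2$ from the non-increasing sequence $k\mapsto \dim I_k$) then forces $\delta_2(L^\F,\F)=\delta_2(L^\C,\C)$. I do not expect a genuine obstacle here; the only points requiring care are that the intersections in question are ``effectively finite'', so that they commute with base change, and that ideals of possibly distinct codimensions are handled uniformly — the substantive work is already contained in Lemma~\ref{lem_dimension_alg_closed}.
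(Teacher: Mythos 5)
Your proof is correct and follows essentially the same route as the paper: both invoke Lemma~\ref{lem_dimension_alg_closed} to pass to $\overline{\Q}$, exploit finite-dimensionality to reduce the defining intersection to finitely many ideals, and then descend their spanning vectors to a number field, with the reverse inclusion (your monotonicity observation) supplying the other bound. The only cosmetic difference is that you make the monotonicity under field extension explicit where the paper leaves it implicit when handling the compositum for the ``in particular'' clause.
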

	\begin{proof}
			To conclude the final statement from the first, take for every $1 \leq l \leq \dim_\Z L+1$ the number fields $\F_l$ for which the given dimensions are equal. Take any number field $\F$ that contains all the fields $\F_l$ above, then the result follows from the alternative description of $\delta_2$ in Lemma \ref{lem_delta2_intersections}.
		
		So it suffices to show the first statement about the dimensions.
		By Lemma \ref{lem_dimension_alg_closed}, we know that
		\begin{equation*}
			\dim_\A\bigcap\{I \in \mathcal{P}_{H \curvearrowright L^\A}\mid \dim_\A L^\A/I < k\} = \dim_\C\bigcap\{I \in \mathcal{P}_{H \curvearrowright L^\C}\mid \dim_\C L^\C/I < k\},
		\end{equation*}
		where $\A$ is the algebraic closure of $\Q$. We will now argue that there exists a number field $\F$ such that
		$$\bigcap\{I \in \mathcal{P}_{H \curvearrowright L^\F}\mid \dim_\F L^\F/I < k\} \otimes_\F \A = \bigcap\{I\in \mathcal{P}_{H \curvearrowright L^\A}\mid \dim_\A L^\A/I < k\},$$
		from which the lemma's statement clearly follows.
		
		Note first that for any number field $\F$, we have
		$$\bigcap\{I \in \mathcal{P}_{H \curvearrowright L^\A}\mid \dim_\A L^\A/I < k\} \subset \bigcap\{I \in \mathcal{P}_{H \curvearrowright L^\F}\mid \dim_\F L^\F/I < k\} \otimes_\F \A .$$
		Since $L$ is finite dimensional, we however know that
		$$\bigcap\{I \in \mathcal{P}_{H \curvearrowright L^\A}\mid \dim_\A L^\A/I < k\} = I_1 \cap \ldots \cap I_l $$
		for a finite subset $\{I_1, \ldots ,I_l\}$ of $\{I \in\mathcal{P}_{H \curvearrowright L^\A}\mid \dim_\A L^\A/I < k\}$. Fix a $\Z$-basis of $L$. Every ideal $I_i$ equals $\vct_\A\{w_1^{(i)}, \ldots, w_{j_i}^{(i)}\}$ for some vectors in $L^\A$. Identifying them with their coordinates, we see that the finite set of vectors
		$$\{w_1^{(i)}, \ldots, w_{j_i}^{(i)}\mid 1\leq i \leq l\}$$
		must be included in $L^\F$ for some number field $\F$. Set $I_i^\F = \vct_\F\{w_1^{(i)}, \ldots, w_{j_i}^{(i)}\}$. Then, $I_i^\F \otimes_\F \A = I_i$, and therefore,
		$$\bigcap\{I \in \mathcal{P}_{H \curvearrowright L^\F}\mid \dim_\F L^\F/I < k\} \otimes_\F \A \subset I_1 \cap \ldots \cap I_l,$$
		ending the first part.
	\end{proof}
	\begin{lemma} \label{lem_Idelta}
		There exists an ideal $I_{\delta}$ in $L$ and a number $M_{I, \delta} \in \N$ such that for any algebraically closed field $\F$ of characteristic zero or characteristic $p> M_{I, \delta}$ we have
		$$I_\delta\otimes_\Z \F = \bigcap\{I \in \mathcal{P}_{H \curvearrowright L^\F}\mid \dim_\F L^\F/I < \delta_2(L^\C, H)\} .$$
	\end{lemma}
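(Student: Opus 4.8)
The plan is to construct $I_\delta$ by descending the complex ideal first to a number field via Galois descent, then to $\Z$ by saturating a lattice, and finally to transfer the defining equality to all admissible fields by base change together with the first order encoding already exploited in Lemma \ref{lem_dimension_alg_closed}.

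Write $k := \delta_2(L^\C, \C)$ and, for a field $\F$, set $J(\F) := \bigcap\{I \lhd L^\F \mid \dim_\F L^\F/I < k\}$. By Lemma \ref{lem_dim_number_field} (more precisely, by the statement proved in the course of its proof, applied with this $k$) there is a number field $K$ with $J(K) \otimes_K \A = J(\A)$; enlarging $K$ — which preserves this equality, as one checks directly by comparing dimensions of the relevant finite intersections — we may assume $K/\Q$ is Galois, with group $\Gamma := \Gal(K/\Q)$. Now $\Gamma$ acts on $L^K = L \otimes_\Z K$ semilinearly over $K$ with fixed points $L^\Q$, and since $L$ admits a $\Z$-basis for which the bracket has integral structure constants, each $\sigma \in \Gamma$ is a semilinear automorphism of the Lie algebra $L^K$. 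Hence $\sigma$ sends ideals to ideals and preserves $K$-codimension, so it permutes the collection $\{I \lhd L^K \mid \dim_K L^K/I < k\}$; intersecting yields $\sigma(J(K)) = J(K)$. Thus $J(K)$ is $\Gamma$-stable, and by Galois descent $J^\Q := J(K)^\Gamma = J(K) \cap L^\Q$ satisfies $J^\Q \otimes_\Q K = J(K)$. It is an ideal of $L^\Q$: for $x \in J^\Q \subset L^\Q$ and $y \in L^\Q$ one has $[x,y]_L \in L^\Q$ and $[x,y]_L \in J(K)$, hence $[x,y]_L \in J(K) \cap L^\Q = J^\Q$.

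Define $I_\delta := J^\Q \cap L$, a $\Z$-submodule of $L$; it is again an ideal of $L$ by the same computation, and it is saturated, since if $nx \in I_\delta$ with $x \in L$ and $n \neq 0$, then $x \in J^\Q$ (as $J^\Q$ is a $\Q$-subspace), so $x \in I_\delta$. Therefore $L/I_\delta$ is torsion-free, hence free, so the sequence $0 \to I_\delta \to L \to L/I_\delta \to 0$ splits and stays exact after $\otimes_\Z \F$ for every field $\F$; in particular $I_\delta \otimes_\Z \F \hookrightarrow L^\F$ has image an ideal of $\F$-dimension $\rank_\Z I_\delta$. As $L$ is a full lattice in $L^\Q$, so is $I_\delta = J^\Q \cap L$ inside $J^\Q$, whence $I_\delta \otimes_\Z \Q = J^\Q$ and $\rank_\Z I_\delta = \dim_\Q J^\Q$. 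Moreover $I_\delta \otimes_\Z \C = J^\Q \otimes_\Q \C = J(K) \otimes_K \C$, and $J(K) \otimes_K \C = J(\C)$: indeed each codimension-$<k$ ideal of $L^K$ base-changes to such an ideal of $L^\C$, so $J(\C) \subseteq J(K) \otimes_K \C$, while $\dim_\C(J(K) \otimes_K \C) = \dim_K J(K) = \dim_\A J(\A) = \dim_\C J(\C)$ using $J(K) \otimes_K \A = J(\A)$ and Lemma \ref{lem_dimension_alg_closed}. Hence $I_\delta \otimes_\Z \C = J(\C)$ and $\rank_\Z I_\delta = \dim_\C J(\C)$.

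Finally, fix a $\Z$-basis $u_1, \dots, u_d$ of $I_\delta$. Identifying vectors in $L^\F$ with their coordinates with respect to a fixed $\Z$-basis of $L$ as in the proof of Lemma \ref{lem_dimension_alg_closed}, the assertion ``$u_i \in \bigcap\{I \lhd L^\F \mid \dim_\F L^\F/I < k\}$'' is a sentence in the first order language of rings with integer parameters, and it holds for $\F = \C$ because $I_\delta \otimes_\Z \C = J(\C)$. By the Lefschetz Principle, exactly as in Lemma \ref{lem_dimension_alg_closed}, there is $M_{I,\delta} \in \N$ (which we may take $\geq M_\delta$) such that for every algebraically closed field $\F$ of characteristic zero or characteristic $p > M_{I,\delta}$ each $u_i$ lies in $J(\F)$, i.e. $I_\delta \otimes_\Z \F \subseteq J(\F)$. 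For such $\F$, Lemma \ref{lem_dimension_alg_closed} also gives $\dim_\F J(\F) = \dim_\C J(\C) = d = \dim_\F(I_\delta \otimes_\Z \F)$, so the inclusion of $\F$-subspaces of equal finite dimension is an equality, $I_\delta \otimes_\Z \F = J(\F)$, which is the claim. The step I expect to be the main obstacle is the Galois-descent argument: one must verify carefully that $\Gamma$ acts through Lie-algebra automorphisms (this is precisely where integrality of the structure constants of $L$ is used) and that it permutes the finitely many small-codimension ideals cutting out $J(K)$, so that $J(K)$ is genuinely $\Gamma$-stable and the classical descent $J(K)^\Gamma \otimes_\Q K = J(K)$ applies; the remaining steps are routine manipulations with flat base change, saturation of lattices, and the reuse of the first order encoding of Lemma \ref{lem_dimension_alg_closed}.
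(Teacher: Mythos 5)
Your proposal follows essentially the same route as the paper's proof: pass to a number field via Lemma \ref{lem_dim_number_field}, enlarge to a Galois extension of $\Q$, descend the intersection of small-codimension ideals to a rational ideal via Galois descent (using that the coordinate-wise Galois action is a semilinear Lie-algebra automorphism because the structure constants are integral), intersect with $L$ to obtain $I_\delta$, and then use the Lefschetz Principle exactly as in Lemma \ref{lem_dimension_alg_closed} to transfer the inclusion $I_\delta\otimes_\Z\F\subseteq J(\F)$ to all admissible fields, with equality by dimension count. Your write-up is somewhat more explicit about two points the paper leaves implicit — that enlarging the number field preserves the dimension equality, and that the containment $J(\C)\subseteq J(K)\otimes_K\C$ (together with equal dimensions, not just equal dimensions alone) is what forces $I_\delta\otimes_\Z\C=J(\C)$ — but the underlying argument is the same.
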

	\begin{proof}
		Set $\delta = \delta_2(L^\C, H)$. By Lemma \ref{lem_dim_number_field}, there exists a number field $\F_\delta \subset \C$ such that
		\begin{equation} \label{eq_equal_dim}
			\dim_{\F_\delta}\bigcap\{I \in \mathcal{P}_{H\curvearrowright L^{\F_\delta}}\mid \dim_{\F_\delta} L^{\F_\delta}/I < \delta\} = \dim_\C\bigcap\{I \in \mathcal{P}_{H \curvearrowright L^\C}\mid \dim_\C L^\C/I < \delta\}.
		\end{equation}
		By enlarging the field if needed, we may suppose $\F_\delta$ is Galois over $\Q$ with Galois group $G := \Gal(\F_\delta/\Q)$.
		
		Fix a $\Z$-basis $\{v_i\mid 1\leq i \leq n\}$ of $L$. Define for every $\sigma \in G$ the map
		$$\sigma: L^{\F_\delta} \to L^{\F_\delta}: v = \sum_{i=1}^n \lambda_i v_i \mapsto \sigma(v):= \sum_{i=1}^n \sigma(\lambda_i) v_i,$$
		i.e. applying $\sigma$ coordinate-wise to a vector. These maps satisfy the following properties.
			\begin{enumerate}[(i)]
			\item It defines an action of $G$ on $L^{\F_\delta}$, namely $\Id_G(v) = v$ and $\sigma_1(\sigma_2(v)) = (\sigma_1\circ\sigma_2)(v)$ for all $v\in L^{\F_\delta}$ and $\sigma_1, \sigma_2 \in G$.
			\item The maps are $\sigma$-linear, so $\sigma(v + w) = \sigma(v) + \sigma(w)$ and $\sigma(\lambda v) = \sigma(\lambda)\sigma(v)$ for all $v,w\in L^{\F_\delta}$ and $\lambda \in \F_\delta$.
			\item The maps preserve the Lie bracket, i.e.~$\sigma([v,w]_L) = [\sigma(v), \sigma(w)]_L$, since $[\cdot , \cdot]_L$ has integral structure constants.
			\item If $\xi \in H$, then $\sigma(\xi(v)) = \xi(\sigma(v))$, since $\xi\in \Aut(L)$.
		\end{enumerate}
		According to the terminology of \cite[Definition 3.2.2]{Winter}, the first two points say that the construction defines a $G$-product on the vector space underlying $L^{\F_\delta}$. The third point says that the action maps ideals to ideals of the same dimension. The fourth point guarantees that $H$-invariance is preserved. Hence, for all $\sigma \in G$
		\begin{equation*}
			\sigma\left(\bigcap\{I \in \mathcal{P}_{H\curvearrowright L^{\F_\delta}}\mid \dim_{\F_\delta} L^{\F_\delta}/I < \delta\}\right)\\ = \bigcap\{I \in \mathcal{P}_{H\curvearrowright L^{\F_\delta}}\mid \dim_{\F_\delta} L^{\F_\delta}/I < \delta\}.
		\end{equation*}
		
		By the theory of Galois Descent (see \cite[Theorem 3.2.5]{Winter}), this implies that their exists a $\Q$-vector space $V \subset L^\Q$ such that
		$$V\otimes_\Q \F_\delta = \bigcap\{I \in \mathcal{P}_{H\curvearrowright L^{\F_\delta}}\mid \dim_{\F_\delta} L^{\F_\delta}/I < \delta\}.$$
		Note that $V$ must be an ideal. We claim that $I_\delta = V\cap L$ is the ideal of $L$ for which the lemma's statement holds.
		
		Take a $\Z$-basis $\{w_1, \ldots ,w_l\}$ of $I_\delta$ ($l\in \N$). By Equation \eqref{eq_equal_dim}, we know that
		\begin{equation*}
			\bigcap\{I \in \mathcal{P}_{H\curvearrowright L^{\F_\delta}}\mid \dim_{\F_\delta} L^{\F_\delta}/I < \delta\}\otimes_{\F_\delta} \C = \bigcap\{I \in \mathcal{P}_{H\curvearrowright L^{\C}}\mid \dim_\C L^\C/I < \delta\},
		\end{equation*}
		so we know
		$$\forall \lambda_1, \ldots , \lambda_l \in \C: \sum_{i=1}^l \lambda_iw_i \in  \bigcap\{I \in \mathcal{P}_{H\curvearrowright L^{\C}}\mid \dim_\C L^\C/I < \delta\}.$$
		
		Just as in the proof of Lemma \ref{lem_dimension_alg_closed}, this can be expressed as a sentence in $\mathcal{L}_{\rng}$. Again, by Lefschetz' Principle, see \cite[Theorem 3.5.5]{logic}, the fact that this holds over $\C$ implies that it holds over any algebraically closed field $\F$ of characteristic zero or $p$, if $p$ is sufficiently large, i.e.
		$$I_\delta \otimes_\Z \F \subset \bigcap\{I \in \mathcal{P}_{H\curvearrowright L^{\F}}\mid \dim_\F L^\F/I < \delta\}.$$
		We end the proof by noting that over  characteristic zero or a sufficiently large prime, Lemma \ref{lem_dimension_alg_closed} guarantees that the dimensions on both sides are equal.
	\end{proof}
\subsection{Proof of Theorem \ref{thm_intro_exact}} \label{ssec_delta_thms}
	In this subsection, we will use the results of the previous subsections to give proof of Theorem \ref{thm_intro_exact}. It will build upon Theorem \ref{thm_RF_Pinfty} below and Corollary \ref{cor_Gtilde_to_L}. The upper bound statement was given in \cite{dere2025residual}, but we will deduce it again, as it only requires Lemma \ref{lem_C_Zp_corr} originating from that paper to do so.
\begin{lemma} \label{lem_Q_is_p_powered}
	Let $L$ denote a Lie ring. If $0\neq v \in L$ and $\psi: L \to Q$ is a homomorphism with $H$-invariant kernel such that $\psi(v) \neq 0$ and $D_{L, \mathcal{P}_{H\curvearrowright L}}(v) = |Q|$, then $|Q| = p^l$ for some prime $p$ and power $l\in\N$.
\end{lemma}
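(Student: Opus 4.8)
The plan is to use the primary decomposition of the finite abelian group underlying $Q$ together with the minimality built into the definition of $D_L$. Since $D_L(v)=|Q|$ is realized by $\psi$, the kernel $I=\ker\psi$ is an ideal of $L$ with $v\notin I$ and $[L:I]=|Q|$ \emph{minimal} among all ideals avoiding $v$; equivalently, $Q=L/I$ and no proper non-zero Lie-ring quotient of $Q$ separating the image of $v$ from $0$ has order $<|Q|$. As a finite $\Z$-module, $Q$ splits canonically as $Q=\bigoplus_{p}Q_p$ into its $p$-primary components.

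The first step is to check that this decomposition is compatible with the bracket, i.e.\ that each $Q_p$ is an ideal of the Lie ring $Q$. If $x\in Q_p$ is annihilated by $p^{a}$, then for every $y\in Q$ one has $p^{a}[x,y]=[p^{a}x,y]=0$, so $[x,y]$ is again $p$-primary; hence $[Q_p,Q]\subseteq Q_p$. Moreover, for distinct primes $p\neq q$ and $x\in Q_p$, $y\in Q_q$ with $p^{a}x=0$ and $q^{b}y=0$, the element $[x,y]$ is killed by both $p^{a}$ and $q^{b}$, hence by $\gcd(p^{a},q^{b})=1$, so $[x,y]=0$. Thus $Q=\bigoplus_p Q_p$ is a direct sum of ideals, and each projection $\pi_p\colon Q\to Q_p$ is a Lie-ring homomorphism.

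For the second step, note that $\psi(v)\neq 0$, so there is some prime $p$ with $\pi_p(\psi(v))\neq 0$. The composite $\pi_p\circ\psi\colon L\to Q_p$ is then a surjective Lie-ring homomorphism with $(\pi_p\circ\psi)(v)\neq 0$, so its kernel is an ideal of $L$ not containing $v$, of index exactly $|Q_p|\leq|Q|$. By minimality of $D_L(v)=|Q|$ we conclude $|Q_p|\geq|Q|$, hence $|Q_p|=|Q|$, which forces $Q_q=0$ for all $q\neq p$. Therefore $|Q|=|Q_p|=p^{l}$ for some $l\in\N$.

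The only point requiring a genuine (if short) argument is the verification in the first step that the primary decomposition respects the Lie bracket, so that each $Q_p$ is an ideal and the projections are homomorphisms; once that is in place, the conclusion is immediate from the defining minimality of $D_L$. I do not anticipate any real obstacle beyond this routine compatibility check.
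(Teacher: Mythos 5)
Your proof is correct and follows essentially the same approach as the paper: decompose $Q$ into its primary components as a finite $\Z$-module, verify the decomposition is compatible with the Lie bracket, and conclude from the minimality in the definition of $D_L$. Your verification that each $Q_p$ is closed under bracketing with all of $Q$ (via $p^a[x,y]=[p^a x,y]=0$) is in fact slightly more thorough than the paper's, which only records that cross-brackets between distinct primary components vanish.
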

\begin{proof}
	The set $Q$ is a finite $\Z$-module and, hence, there are distinct primes $\{p_i \mid 1\leq i \leq d\}$ such that $Q$ can be decomposed as a direct sum of $\Z$-modules
	$$Q = M_1 \oplus M_2 \oplus \ldots \oplus M_d,$$
	where $|M_i| = p_i^{l_i}$. In fact, this is also a direct sum of Lie $\Z$-algebras. Indeed, let $v_i \in M_i$ and $v_j \in M_j$. Then,
	$$p_i^{l_i}[v_i,v_j]_L = [p_i^{l_i}v_i,v_j]_L = 0 \text{ and }p_j^{l_j}[v_i,v_j]_L = [v_i,p_j^{l_j}v_j]_L = 0,$$
	so $[v_i,v_j]_L = \gcd(p_i^{l_i},p_j^{l_j})[v_i,v_j]_L = 0$ by Bezout’s identity. Furthermore, $\psi^{-1}(M_i)$ is $H$-invariant. Indeed, $0\neq w\in \psi^{-1}(M_i)$ if and only if some $p_i^{l_i}$ exists such that $p_i^{l_i}w \in \ker\varphi$. Now, let $\xi\in H$ and $v_i\in \psi^{-1}(M_i)$, then $p_i^{l_i}\xi(v_i) = \xi(p_i^{l_i}v_i) \in \xi(\ker\varphi) = \ker\varphi$, so $\xi(\psi^{-1}(M_i)) = \psi^{-1}(M_i)$.
	
	Now, if $\psi(v) \neq 0$, then it is non-zero in one of the summands, say $M_1$. Thus, $(\pi\circ\psi)(v) \neq 0$ with $\pi : Q \to M_1$ the projection onto that component. The kernel of this map is $H$-invariant by the previous observation. This implies that $D_{L, \mathcal{P}_{H\curvearrowright L}}(v) \leq |M_1|$. Since $|Q| = D_{L,\mathcal{P}_{H\curvearrowright L}}(v)$, we conclude that $Q = M_1$. 
\end{proof}
\begin{lemma} \label{lem_gaps_in_estimates}
	Let $f,g: \N\to \R_{\geq 1}$ be increasing functions. Assume that there exists an $s\in \N$ with $s>1$ such that $f(s^l) \leq g(s^{l})$ for all $l\in \N$ sufficiently large, then $f(r) \preceq g(r)$.
\end{lemma}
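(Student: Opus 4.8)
The plan is to unwind the definition of $\preceq$ and interpolate between consecutive powers of $s$ using monotonicity. First I would fix $l_0 \in \N$ with $f(s^l) \leq g(s^l)$ for all $l \geq l_0$. Then, given an argument $r \geq s^{l_0}$, I would take $l$ to be the least integer with $s^l \geq r$; this forces $l \geq l_0$, and by minimality $s^{l-1} < r$, hence $s^l = s\cdot s^{l-1} < sr$. Since both $f$ and $g$ are increasing, this gives
$$f(r) \leq f(s^l) \leq g(s^l) \leq g(sr),$$
where the middle inequality is exactly the hypothesis.

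Next I would absorb the finitely many small arguments and assemble a single constant. Setting $A = f(s^{l_0}) \geq 1$ and $C = \max\{s, A\}$ — so that $C \geq 1$ and therefore $\max\{1, 1/C\} = 1$ — I would split into two ranges. For $r \geq s^{l_0}$ the displayed chain yields $f(r) \leq g(sr) \leq g(Cr) \leq C\,g(Cr)$, using $C \geq s$, $C \geq 1$, $g \geq 1$ and monotonicity of $g$. For $1 \leq r < s^{l_0}$, monotonicity of $f$ gives $f(r) \leq A \leq C \leq C\,g(Cr)$ since $g(Cr) \geq 1$. Together these show $f(r) \leq C\,g(Cr)$ for every $r \geq 1$, which is precisely $f \preceq g$.

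There is no genuine obstacle here; the only point requiring a little care is the bookkeeping forced by the definition of $\preceq$, which rescales the argument of $g$ by $C$. One must choose $C$ large enough to simultaneously swallow the multiplicative gap $s$ between successive powers of $s$ and the finitely many initial values of $f$, and $C = \max\{s, f(s^{l_0})\}$ does exactly that. If one prefers to read $f$ and $g$ as functions on $\R_{\geq 1}$ rather than on $\N$, the same argument applies after replacing $r$ by $\lceil r\rceil$ in the two places where integrality of the argument is used, at the cost of a harmless extra factor in $C$.
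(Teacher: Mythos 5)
Your proof is correct and follows essentially the same approach as the paper's: pick the power of $s$ just above $r$, use monotonicity to compare $f(r)$ with $g$ at that power and then with $g(sr)$, and take $C = \max\{s, f(s^{l_0})\}$. The only difference is cosmetic indexing (you take the least $l$ with $s^l \geq r$, the paper takes $l$ with $s^l \leq r < s^{l+1}$), and you spell out the small-$r$ bookkeeping that the paper leaves implicit.
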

\begin{proof}
	We need to show that there exists a constant $C>0$ such that $f(r) \leq Cg(Cr)$. Suppose the inequality $f(s^l) \leq g(s^{l})$ holds for all $l\geq l_0$. Take $r\geq s^{l_0}$ arbitrary. Now, take $l \geq l_0$ such that $s^l \leq r < s^{l+1}$. Now, since $f$ is increasing, we have
	$$f(r) \leq f(s^{l+1}) \leq g(s^{l+1}) \leq g(sr).$$
	Now, take $C = \max\{s,f(s^{l_0})\}$, then $f(r) \leq Cg(Cr)$ for all $r\in\N$.
\end{proof}
Where we used the Lefschetz Principle in the previous subsection, we will need another result in what follows, namely some form of the Ax-Kochen Principle (see \cite[Theorem 2.14]{MR3013956}).
\begin{thm}[Ax-Kochen Principle] \label{thm_ax_kochen}
	Let $\sigma$ be a sentence in $\mathcal{L}_{\rng}\cup\{t\}$ and $k\in \N$. The statement $\sigma$ is true in $\Z_{p^k}$ if and only if the statement is true in $\Z_p[T]/(T^k)$ for all prime numbers $p$ sufficiently large. Here, the constant $t$ is identified with $p$ and $T$, respectively.
\end{thm}
\begin{ex}
	Fix $k\in \N$. The expression $\forall x: t^kx = 0$ is a sentence in $\mathcal{L}_{\rng}\cup \{t\}$, as it only uses symbols in $\mathcal{L}_{\rng}$ and the new symbol $t$. This statement is true in $\Z_{p^k}$ because $t$ becomes $p$ in this setting. The statement is also true in $\Z_p[T]/(T^k)$ with $t$ becoming $T$, as $$\forall x \in \Z_p[T]/(T^k): T^kx = 0.$$
\end{ex}
We will use this principle to derive the following crucial insight:
\begin{prop} \label{prop_Zpk_to_Zp}
Let $L$ be a nilpotent Lie ring, $v\in L$ and $H\leq \Aut(L)$ be finitely generated. For every $m\in \N$, there exists a bound $M\in \N$ such that for all primes $p \geq M$ and all $I \lhd L^{\Z_{p^k}}$ an $H$-invariant ideal such that $v\notin I$ and $[L^{\Z_{p^k}}: I] \leq p^m$, there exists an $H$-invariant ideal $J \lhd L^{\Z_p}$ such that $v\notin L^{\Z_p}$ and $|L^{\Z_p}/J|\leq |L^{\Z_{p^k}}/I|$.
\end{prop}
\begin{proof}
	First, let us describe the setup on which we will apply the Ax-Kochen Principle. This setup is essentially the same as the setup we used before when we dealt with Lefschetz' Principle.
	
	Take a $\Z$-basis of the $n$-dimensional $L$, and describe all vectors in $L$, $L^{\Z_{p^k}}$ and $L\otimes_\Z (\Z_p[T]/(T^k))$ using coordinate vectors with entries in $\Z$, $\Z_{p^k}$ and $\Z_p[T]/(T^k)$ respectively. Since $v\in L$, it is described by coordinates in $\Z$, and it can therefore easily be represented in the language of rings. In what follows, the vectors $w_i$ will represent tuples of length $n$, corresponding to vectors in $L^{\Z_{p^k}}$ or $L\otimes_\Z (\Z_p[T]/(T^k))$. Since $H$ is finitely generated, we can take generators $\xi_1$ to $\xi_s$. They are given by integral matrices. The Lie bracket $[\cdot, \cdot]_L$ is defined by its integral structure constants.
	
	Suppose $(e_1, \ldots , e_n) \in (\N\cup\{0\})^n$ is fixed. Consider the following sentence in the language $\mathcal{L}_{\rng}\cup\{t\}$:
	\begin{multline} \label{eq_ideal_koch}
		\exists \{w_1, \ldots , w_n\}: \exists \lambda: \det\{w_1, \ldots , w_n\}\cdot \lambda = 1 \text{ and } v \notin \vct\{t^{e_1}w_1, \ldots , t^{e_n}w_n\}\\
		\text{and } \forall 1\leq i,j\leq n: [t^{e_i}w_i,w_j]_L \in \vct\{t^{e_1}w_1, \ldots , t^{e_n}w_{n}\} \\
		\text{and } \forall 1\leq l \leq s: \forall 1 \leq i \leq n: \xi_l(t^{e_i}w_i) \in \vct\{t^{e_1}w_1, \ldots , t^{e_n}w_{n}\}.
	\end{multline}
	Recall that the statement $v \notin \vct\{t^{e_1}w_1, \ldots , t^{e_n}w_n\}$ is shorthand notation for
	$$\forall \mu_1, \ldots , \mu_n: v \neq \mu_1t^{e_1}w_1 + \ldots + \mu_n t^{e_n}w_n.$$
	By Theorem \ref{thm_ax_kochen}, we know that this statement holds in $\Z_{p^k}$ if and only if it holds in $\Z_p[T]/(T^k)$, provided that the prime number $p$ is sufficiently large. Take $M\in\N$ such that this result applies to all sentences with $\sum_{j=1}^n e_j \leq m$.
	
	Now, suppose an $H$-invariant ideal $v\notin I\lhd L^{\Z_{p^k}}$ is given for $p\geq M$ and $[L^{\Z_{p^k}}:I] \leq p^m$. This ideal corresponds to an ideal $\tilde{I}$ in $L$ itself. As a $\Z$-submodule of $L$, we know there are vectors $\{w_1, \ldots , w_n\}$ and numbers $\{e_1, \ldots , e_n\}$ such that $\{w_1, \ldots , w_n\}$ is a basis of $L$ and
	$$\tilde{I} = \vct_\Z\{p^{e_1}w_1, \ldots, p^{e_n}w_n\}.$$
	Note that $\sum_{j=1}^n e_i \leq k$, since this sum gives the index of $\tilde{I}$ in $L$ and thus of $I$ in $L^{\Z_{p^k}}$.
	
	Using these vectors (interpreted modulo $p^k$) and this choice of integers $(e_1, \ldots , e_n)$, we see that the statement in Equation \eqref{eq_ideal_koch} holds with $t = p$. Indeed, the first line follows by the fact that $\{w_1, \ldots , w_n\}$ was a $\Z$-basis and the statement that $v\notin I$. The second line says that $I$ defines an ideal. The third line says that this ideal is $H$-invariant. Therefore, since $p\geq M$, the Ax-Kochen Principle says that this statement also holds in $\Z_p[T]/(T^k)$ with $t= T$. We will now continue in this setting.
	
	By Equation \eqref{eq_ideal_koch}, we obtain a set $\{w_1, \ldots, w_n\} \subset \Z_p[T]/(T^k)$ and 
	$$\tilde{J} = \vct_{\Z_p[T]/(T^k)}\{T^{e_1}w_1, \ldots , T^{e_n}w_n\}$$
	such that $v\notin \tilde{J}$. Since $\det\{w_1, \ldots , w_n\}\cdot \lambda = 1$, we see that $\{w_1, \ldots , w_n\}$ spans $L\otimes_\Z \Z_p[T]/(T^k)$. Hence, the second line of the equation says that $\tilde{J}$ is an ideal of $L\otimes_\Z \Z_p[T]/(T^k)$. The third line guarantees that $\tilde{J}$ is $H$-invariant. Furthermore, from the fact that $\{w_1, \ldots , w_n\}$ spans $L\otimes_\Z \Z_p[T]/(T^k)$ and from the definition of $\tilde{J}$, it follows easily that 
	$$\left|\dfrac{L\otimes_\Z \Z_p[T]/(T^k)}{\tilde{J}} \right| = p^{\sum_{j=1}^n e_j}.$$
	
	Consider $J = \tilde{J}\cap L^{\Z_p}$. Since $\tilde{J}$ is an $H$-invariant ideal over the ring $\Z_p[T]/(T^k)$, $J$ is surely an $H$-invariant ideal of $L^{\Z_p}$ over the ring $\Z_p$. Since $v\notin \tilde{J}$, we also see that $v\notin J$. It suffices to argue that $|L^{\Z_p}/J| \leq p^{\sum_{j=1}^n e_j}$. For this, note that if $v_1, v_2 \in L^{\Z_p}$, then $v_1 + \tilde{J} \neq v_2 + \tilde{J}$ if and only if $v_1+J \neq v_2+ J$ and hence
	$$\left|\dfrac{L\otimes_\Z \Z_p[T]/(T^k)}{\tilde{J}} \right| \geq \left| \dfrac{L^{\Z_p}}{J}\right| .$$
\end{proof}
\begin{thm} \label{thm_RF_Pinfty}
	We have $\RF_{L, \mathcal{P}_{H\curvearrowright L}}^{\ast} = \log^{\delta_1(L^\C, H)}$ with $\ast\in\{\nom, \Gui\}$.
\end{thm}
\begin{proof}
	Let us first argue that $\RF_{L, \mathcal{P}_{H\curvearrowright L}}^{\nom} \preceq \log^{\delta_1(L^\C, H)}$: Take a non-trivial vector $v\in L$ with $\norm{v}_L \leq r$. By Lemma \ref{lem_C_Zp_corr}, there exists a prime $p \leq C\log(r) + C$ such that $v\notin pL$ and $\delta_1(L^\C , H) \geq \delta_1(L^{\Z_p}, H)$. By definition of $\delta_1$, we know that there exists an ideal $I \in \mathcal{P}_{H\curvearrowright L^{\Z_p}}$ that does not contain the non-trivial vector $v+pL$ and such that $\dim_{\Z_p} L^{\Z_p}/I \leq \delta_1(L^{\Z_p}, H)$. We conclude that
	\begin{equation*}
		D_{L, \mathcal{P}_{H\curvearrowright L}}(v) \leq |L^{\Z_p}/I| \leq p^{\delta_1(L^{\Z_p}, H)} \leq  p^{\delta_1(L^{\C}, H)} \leq \left(C\log(r)+ C\right)^{\delta_1(L^{\C}, H)} \preceq \log^{\delta_1(L^{\C}, H)}(r),
	\end{equation*}
	which shows that $\RF_{L, \mathcal{P}_{H\curvearrowright L}}^{\nom} \preceq \log^{\delta_1(L^\C, H)}$ by taking the maximum over all $0<\norm{v}_L \leq r$.
	
	Now, we will argue that $\RF_{L, \mathcal{P}_{H\curvearrowright L}}^{\nom} \succeq \log^{\delta_1(L^\C, H)}$. For this, take the non-empty ideal $I_\delta$ and the bound $M_{I, \delta}\in \N$ of Lemma \ref{lem_Idelta} and $M$ of Proposition \ref{prop_Zpk_to_Zp} (where $m$ is chosen as the dimension of $L$, namely $n$). Pick a non-trivial vector $v\in I_\delta$, and define $x$ to be the product of all prime numbers smaller than or equal to $\max\{M_{I, \delta},M\}$. Define for every $l\in \N$ the element
	$$v_l = x^l (\lcm(1,2,3, \ldots, l))^{2n+1} v \in L.$$
	Note that $\norm{v_l}_L = x^l(\lcm(1,2,3, \ldots, l))^{2n+1} \norm{v}_L$ and thus by the Prime Number Theorem, there exists $C\in \N$ such that $\norm{v_l}_L \leq C^l$, see \cite[Theorem 4.3.2]{MR3559913}. We claim that $D_{L, \mathcal{P}_{H\curvearrowright L}}(v_l) \geq l^{\delta_2(L^\C, H)} = l^{\delta_1(L^\C, H)}$. If so, we will have showed that
	$$\RF_{L, \mathcal{P}_{H\curvearrowright L}}^{\nom}(C^l) \geq l^{\delta_1(L^\C, H)} = \left(\dfrac{1}{\log(C)}\right)^{\delta_1(L^\C, H)}\log^{\delta_1(L^\C , H)}(C^l),$$
	from which the general inequality $\RF_{L, \mathcal{P}_{H\curvearrowright L}}^{\nom} \succeq \log^{\delta_1(L^\C, H)}$ follows by Lemma \ref{lem_gaps_in_estimates}. 
	
	Consider the ideal that realizes $D_{L, \mathcal{P}_{H\curvearrowright L}}(v_l)$. By Lemma \ref{lem_Q_is_p_powered}, we know that we can see this ideal as an $H$-invariant ideal $I$ of $L^{\Z_{p^k}}$ for some prime power $p^k$. Surely, $v_l$ has to be non-trivial in $L^{\Z_{p^k}}$.
	
	Suppose first that $p \mid x$, i.e. $p \leq\max\{M_{I, \delta},M\}$. Then, $D_{L, \mathcal{P}_{H\curvearrowright L}}(v_l) \geq p^l \geq 2^l$, since $v_l$ can be written as $p^l\tilde{v}$ for some $\tilde{v} \in L$. For $l$ large enough, we surely have that $2^l \succeq l^{\delta_1(L^\C, H)}$. 
	
	Suppose now that $p \geq \max\{M_{I, \delta},M\}$, but $p \leq l$. We can take $l_2\in\N$ such that $p^{l_2} \leq l < p^{l_2+1}$. Now, the coefficient of $v_l$ is a multiple of $p^{l_2(2n+1)}$. Therefore,
	$$|L/I| \geq p^{l_2(2n+1)+1} \geq p^{(l_2+1)(n+1)} \geq l^{n+1} \geq l^{\delta_1(L^\C, H)}.$$
	
	Finally, suppose that $p > l$. We may assume that $v\notin pL$. Since $\gcd(p,x^l(\lcm(1,2, \ldots, l))^{2n+1}) = 1$, this implies that $v_l \notin pL$. Hence, $|L^{\Z_{p^k}}/I| = D_{L, \mathcal{P}_{H\curvearrowright L}}(v_l) \leq p^n$. Using that $v + p^kL\notin I$, Proposition \ref{prop_Zpk_to_Zp} gives us an $H$-invariant ideal $J \lhd L^{\Z_p}$ such that $v + pL \notin J$. Again, $\gcd(p,x^l(\lcm(1,2, \ldots, l))^{2n+1}) = 1$ guarantees that $v_l + pL \notin J$, and thus
	$$ D_{L, \mathcal{P}_{H\curvearrowright L}}(v_l) = |L^{\Z_{p^k}}/I| \geq |L^{\Z_p}/J|.$$ 
	Since $v+pL\in I_\delta \otimes_\Z \Z_p$, we know by Lemma \ref{lem_Idelta} that $\dim_{\Z_p} L^{\Z_p}/J \geq \delta_2(L^\C, H)$. Hence,
	$$D_{L, \mathcal{P}_{H\curvearrowright L}}(v_l) \geq p^{\delta_2(L^\C, H)} \geq l^{\delta_2(L^\C, \C)},$$
	proving the claim.
	
	By Lemma \ref{lem_link_Gui_norm}, we see that
	$$\RF_{L, \mathcal{P}_{H\curvearrowright L}}^{\nom} =  \RF_{L, \mathcal{P}_{H\curvearrowright L}}^{\Gui}$$
	by the polylogarithmic behavior of $\RF_{L, \mathcal{P}_{H\curvearrowright L}}^{\nom}$ established above.
\end{proof}
	Now, we easily derive Theorem \ref{thm_intro_exact}.
	\begin{proof}[Proof of Theorem \ref{thm_intro_exact}]
		We must argue that there exists a number $\delta\in\N$ such that $\RF_{\Gamma} = \log^\delta$, where $\Gamma$ is a finitely generated, infinite, virtually nilpotent group. This is clear. Indeed, by Corollary \ref{cor_Gtilde_to_L}, we know that $\RF_{\Gamma}$ equals $\RF^{\Gui}_{L, \mathcal{P}_{H \curvearrowright L}}$ for some Lie ring $L$ and finitely generated $H \leq \Aut(L)$. Now apply Theorem \ref{thm_RF_Pinfty}.
	\end{proof}

\subsection{Profinite invariance}

In this section the proof of Theorem \ref{thm_intro_profinite_invariant} is given. As the constant $\delta$ depends on the complex Lie algebra $L^\C$, or equivalently the complex Mal'cev completion $G^\C$, and the action of $H$ on it, it suffices to show that these are profinite invariants. As $\delta$ is defined using $H$-invariant ideals, it also suffices to show that the image of $H$ in $\Out(G^\C)$ is an invariant. We denote by $\iota: \Gamma \to \hat{\Gamma}$ the natural embedding of a residual finite group $\Gamma$ into its profinite completion $\hat{\Gamma}$.

As before, let $\Gamma$ be a virtually nilpotent group. Let $G \lhd_f \Gamma$ be an $\mathcal{I}$-group, and $H \cong \Gamma/G$.
By \cite[Proposition 3.2.2.a.]{ribes2010profinite} we can recover the profinite completion $\hat{G}$ from $\hat{\Gamma}$.
\begin{prop}
	Let $\Gamma$ be a finitely generated residually finite group. If $\pi:\hat \Gamma \rightarrow H$ is a finite discrete quotient, then the group $G=\iota^{-1}(\ker(\pi))$ has profinite completion $\hat G = \ker(\pi)$.
\end{prop}

The profinite completion $\hat G$ splits as a product $\displaystyle \prod_{p \text{ prime}} \hat G^p$ of its pro-$p$ completions $\hat G^p$, which are characteristic. In particular, if we fix a prime $p$, then the product $\displaystyle \prod_{\substack{q\neq p \\ \text{prime}}}\hat G^q$ is normal in $\hat \Gamma$. If we call its quotient $\hat \Gamma_p$, then we have the following commutative diagram with exact rows.
\[\begin{tikzcd}
	1 & G & \Gamma & H & 1 \\
	1 & {\hat G} & {\hat \Gamma} & H & 1 \\
	1 & {\hat G^p} & {\hat \Gamma_p} & H & 1
	\arrow[from=1-1, to=1-2]
	\arrow[from=1-2, to=1-3]
	\arrow[hook, from=1-2, to=2-2]
	\arrow[from=1-3, to=1-4]
	\arrow[hook, from=1-3, to=2-3]
	\arrow[from=1-4, to=1-5]
	\arrow[equals, from=1-4, to=2-4]
	\arrow[from=2-1, to=2-2]
	\arrow[from=2-2, to=2-3]
	\arrow[two heads, from=2-2, to=3-2]
	\arrow[from=2-3, to=2-4]
	\arrow[two heads, from=2-3, to=3-3]
	\arrow[from=2-4, to=2-5]
	\arrow[equals, from=2-4, to=3-4]
	\arrow[from=3-1, to=3-2]
	\arrow[from=3-2, to=3-3]
	\arrow[from=3-3, to=3-4]
	\arrow[from=3-4, to=3-5]
\end{tikzcd}\]

\noindent If we write $\textbf{Z}_p$ for the $p$-adic integers, then \cite[Chapter 9]{MR1720368} shows that, for $p$ sufficiently large,  the $\textbf{Z}_p$-completion $G^{\textbf{Z}_p}$ of $G$ (as introduced in Definition \ref{df_completion}) corresponds to the pro-$p$ completion $\hat{G}^p$. The action of $\Gamma$ on $G$ by conjugation induces on the one hand an action on the profinite completion $\hat{G}$ and thus also $\hat{G}^p$; and on the other hand an action on the $\textbf{Z}_p$-completion by extension of scalars. As both of these actions are continuous and agree on the dense subset $G$, the actions are identical.

The short exact sequences determine representations $\rho^\prime: H \to \Out(G)$ and $\rho_p: H \to \Out\left(\hat{G}^p\right)$. By taking the natural induced map $\rho: H \to \Out(\hat{G}^p)$ by the former, it is clear from the commutative diagram that the image of $\rho$ and $\rho_p$ are the same. Note that the latter is completely determined by the profinite completion $\hat{\Gamma}$ and the fixed discrete quotient $H$, which will be the key observation for the proof. 

\begin{proof}[Proof of Theorem \ref{thm_intro_profinite_invariant}]
Let $\Gamma_1, \Gamma_2$ be two finitely generated virtually nilpotent groups with isomorphic profinite completion $\hat{\Gamma}_1$ and $\hat{\Gamma}_2$. Fix $\pi_1:\Gamma_1\rightarrow H$ a finite quotient of $\Gamma_1$ such that the kernel $G_1$ is torsion-free and nilpotent as before. The map $\pi_1$ extends to a finite discrete quotient $\pi:\hat \Gamma_1\rightarrow H$ with torsion-free nilpotent kernel. As $\hat{\Gamma}_1$ and $\hat{\Gamma}_2$ are isomorphic, this map induces a surjective morphism $\pi_2:\Gamma_2\rightarrow H$, where the kernel $G_2$ is also torsion-free nilpotent.
	
	Pick a prime $p$ as before and let $\rho_i: H \to \Out(\hat{G}^p_i)$ be the actions on $G_i^{\textbf{Z}_p}=\hat G_i^p$ as before. By the observation before the proof, we obtain that $\rho_1$ and $\rho_2$ have the same image. By taking the tensor product, there are induced actions $\rho_i^\C: H\to \Out(G_i^\C)$ on $G_i^{\textbf{Z}_p}\otimes_{\textbf{Z}_p} \C = G_i^\C$. As before, $\rho_i^\C: H\to \Out(G_i^\C)$ is precisely the action induced by conjugation of $\Gamma_i$ on $G_i$, so exactly the representations occurring in Definition \ref{df_delta}. It now follows that $\mathcal P_{H\curvearrowright G_1^\C}$ and $\mathcal P_{H\curvearrowright G_2^\C}$ are identical, and thus the theorem follows from Theorems \ref{thm_RF_Pinfty} and  \ref{thm_intro_RFG_RFL}. 			
\end{proof}

\bibliographystyle{plain}
\bibliography{DMV_virt_nilpotent}

\begin{thebibliography}{10}

\bibitem{MR283082}
Gilbert Baumslag.
\newblock {\em Lecture notes on nilpotent groups}, volume No. 2 of {\em
  Regional Conference Series in Mathematics}.
\newblock American Mathematical Society, Providence, RI, 1971.

\bibitem{Birkhoff}
Garrett Birkhoff.
\newblock Representability of {L}ie algebras and {L}ie groups by matrices.
\newblock {\em Ann. of Math. (2)}, 38(2):526--532, 1937.

\bibitem{MR2861528}
K.~Bou-Rabee and D.~B. McReynolds.
\newblock Asymptotic growth and least common multiples in groups.
\newblock {\em Bull. Lond. Math. Soc.}, 43(6):1059--1068, 2011.

\bibitem{bou2010quantifying}
Khalid Bou-Rabee.
\newblock Quantifying residual finiteness.
\newblock {\em J. Algebra}, 323(3):729--737, 2010.

\bibitem{branch_groups}
Khalid Bou-Rabee and Aglaia Myropolska.
\newblock Groups with near exponential residual finiteness growth.
\newblock {\em Israel J. Math.}, 221(2):687--703, 2017.

\bibitem{bradford2019short}
Henry Bradford and Andreas Thom.
\newblock Short laws for finite groups and residual finiteness growth.
\newblock {\em Trans. Amer. Math. Soc.}, 371(9):6447--6462, 2019.

\bibitem{MR1070979}
Lawrence~J. Corwin and Frederick~P. Greenleaf.
\newblock {\em Representations of nilpotent {L}ie groups and their
  applications. {P}art {I}}, volume~18 of {\em Cambridge Studies in Advanced
  Mathematics}.
\newblock Cambridge University Press, Cambridge, 1990.
\newblock Basic theory and examples.

\bibitem{survey2022}
Jonas Der\'e, Michal Ferov, and Mark Pengitore.
\newblock Survey on effective separability.
\newblock In {\em Geometric methods in group theory---papers dedicated to
  {R}uth {C}harney}, volume~34 of {\em S\'emin. Congr.}

\bibitem{math_virt_ab}
Jonas Der\'e and Joren Matthys.
\newblock Residual finiteness growth in virtually abelian groups.
\newblock {\em J. Algebra}, 657:482--513, 2024.

\bibitem{dere2025residual}
Jonas Der{\'e} and Joren Matthys.
\newblock Residual finiteness growth in minimax groups.
\newblock {\em arXiv preprint arXiv:2510.21387}, 2025.

\bibitem{dere2025twostep}
Jonas Der{\'e} and Joren Matthys.
\newblock Residual finiteness growth in two-step nilpotent groups.
\newblock {\em Accepted for publication in "Journal of Group Theory (JGT)"},
  2025.

\bibitem{MR1720368}
J.~D. Dixon, M.~P.~F. du~Sautoy, A.~Mann, and D.~Segal.
\newblock {\em Analytic pro-{$p$} groups}, volume~61 of {\em Cambridge Studies
  in Advanced Mathematics}.
\newblock Cambridge University Press, Cambridge, second edition, 1999.

\bibitem{MR3559913}
Benjamin Fine and Gerhard Rosenberger.
\newblock {\em Number theory}.
\newblock Birkh\"auser/Springer, Cham, second edition, 2016.
\newblock An introduction via the density of primes.

\bibitem{franz2017quantifying}
Daniel Franz.
\newblock Quantifying residual finiteness of linear groups.
\newblock {\em J. Algebra}, 480:22--58, 2017.

\bibitem{grunewald1988subgroups}
Fritz~J Grunewald, Dan Segal, and Geoff~C Smith.
\newblock Subgroups of finite index in nilpotent groups.
\newblock {\em Inventiones mathematicae}, 93:185--223, 1988.

\bibitem{MR369608}
Yves Guivarc'h.
\newblock Croissance polynomiale et p\'eriodes des fonctions harmoniques.
\newblock {\em Bull. Soc. Math. France}, 101:333--379, 1973.

\bibitem{logic}
Martin Hils and Fran\c~cois Loeser.
\newblock {\em A first journey through logic}, volume~89 of {\em Student
  Mathematical Library}.
\newblock American Mathematical Society, Providence, RI, 2019.

\bibitem{Khukhro}
E.~I. Khukhro.
\newblock {\em {$p$}-automorphisms of finite {$p$}-groups}, volume 246 of {\em
  London Mathematical Society Lecture Note Series}.
\newblock Cambridge University Press, Cambridge, 1998.

\bibitem{robinson}
John~C. Lennox and Derek J.~S. Robinson.
\newblock {\em The theory of infinite soluble groups}.
\newblock Oxford Mathematical Monographs. The Clarendon Press, Oxford
  University Press, Oxford, 2004.

\bibitem{loh2017geometric}
Clara L\"{o}h.
\newblock {\em Geometric group theory: An introduction}.
\newblock Universitext. Springer, Cham, 2017.

\bibitem{MarkPubl}
Mark Pengitore.
\newblock Effective separability of finitely generated nilpotent groups.
\newblock {\em New York J. Math.}, 24:83--145, 2018.

\bibitem{pengitoreJGT}
Mark Pengitore.
\newblock Residual dimension of nilpotent groups.
\newblock {\em J. Group Theory}, 23(5):801--829, 2020.

\bibitem{ribes2010profinite}
Luis Ribes and Pavel Zalesskii.
\newblock Profinite groups.
\newblock In {\em Profinite Groups}, pages 19--74. Springer, 2010.

\bibitem{sega}
Daniel Segal.
\newblock {\em Polycyclic groups}, volume~82 of {\em Cambridge Tracts in
  Mathematics}.
\newblock Cambridge University Press, Cambridge, 1983.

\bibitem{MR3013956}
Lou van~den Dries, Jochen Koenigsmann, H.~Dugald Macpherson, Anand Pillay,
  Carlo Toffalori, and Alex~J. Wilkie.
\newblock {\em Model theory in algebra, analysis and arithmetic}, volume 2111
  of {\em Lecture Notes in Mathematics}.
\newblock Springer, Heidelberg; Centro Internazionale Matematico Estivo
  (C.I.M.E.), Florence, 2014.
\newblock Papers from the Centro Internazionale Matematico Estivo (C.I.M.E.)
  Course held in Cetraro, 2012, Edited by Macpherson and Toffalori, Fondazione
  CIME/CIME Foundation Subseries.

\bibitem{Winter}
David Winter.
\newblock {\em The structure of fields}, volume No. 16 of {\em Graduate Texts
  in Mathematics}.
\newblock Springer-Verlag, New York-Heidelberg, 1974.

\end{thebibliography}
\end{document}